\newtheorem{definition}{Definition}[section]
\newtheorem{claim}[definition]{Claim}
\newtheorem{theorem}[definition]{Theorem}
\newtheorem{lemma}[definition]{Lemma}
\numberwithin{equation}{section}
\def \ch{{\rm ch}}
\def \Mad{{\rm Mad}}
\begin{document}
\title{Strong edge-colorings of sparse graphs with large maximum degree}
\author{Ilkyoo Choi}
\address{Department of Mathematical Sciences, KAIST, Daejeon, Republic of Korea.}
\email{ilkyoo@kaist.ac.kr}

\author{Jaehoon Kim}
\address{School of Mathematics, University of Birmingham, 
Edgbaston, Birmingham, B15 2TT, United Kingdom}
\email{j.kim.3@bham.ac.uk}

\author{Alexandr V. Kostochka}
\address{University of Illinois at Urbana--Champaign, Urbana, IL 61801, USA
 and Sobolev Institute of Mathematics, Novosibirsk 630090, Russia}
\email{kostochk@math.uiuc.edu}

\author{Andr\'e Raspaud}
\address{LaBRI (Universit\'e de Bordeaux), 351 cours de la Lib\'eration, 33405 Talence Cedex, France}
\email{andre.raspaud@labri.fr}

\thanks{
The first author is supported by the National Research Foundation of Korea (NRF) grant funded by the Korea government (MSIP) (NRF-2015R1C1A1A02036398).
The second author is supported by the European 
Research Council under the European Union's Seventh Framework Programme (FP/2007--2013) / ERC Grant Agreements no. 306349 (J.~Kim).
The research is conducted while the second author visited University of Illinois Urbana-Champaign as part of BRIDGE Strategic Partnership.
 Research of the third author
is supported in part by NSF grant  DMS-1600592
and  by
grants 15-01-05867 and 16-01-00499  of the Russian Foundation for Basic Research.
Research of the fourth author received
financial support from the French 
State, managed by the
French National Research Agency (ANR) within the
  "Investments for the future" Programme IdEx Bordeaux - CPU 
(ANR-10-IDEX-03-02).
}
\maketitle

\begin{abstract}
A {\em strong $k$-edge-coloring} of a graph $G$ is a mapping from
$E(G)$ to $\{1,2,\ldots,k\}$ such that every two adjacent edges or two
edges adjacent to the same edge receive  distinct colors. 
The {\em strong chromatic index} $\chi_s'(G)$ of a graph $G$ is the smallest integer $k$ such that $G$ admits a
strong $k$-edge-coloring.
We give bounds on  $\chi_s'(G)$  in terms of the maximum degree $\Delta(G)$ of a graph $G$
when $G$ is sparse, namely, when $G$ is $2$-degenerate or when the maximum average degree $\Mad(G)$ is small. 
We prove that the strong chromatic index of each $2$-degenerate graph $G$ is at most $5\Delta(G) +1$.
Furthermore, we show that for a graph $G$, if $\Mad(G)< 8/3$ and  $\Delta(G)\geq 9$, then $\chi_s'(G)\leq 3\Delta(G) -3$ (the bound $3\Delta(G) -3$ is sharp) and 
if $\Mad(G)<3$ and  $\Delta(G)\geq 7$, then $\chi_s'(G)\leq 3\Delta(G)$ (the restriction $\Mad(G)<3$ is sharp). 
\end{abstract}

\section{Introduction}\label{Introduction}

A {\em strong $k$-edge-coloring} 
 of a graph $G$ is a mapping from
$E(G)$ to $\{1,2,\ldots,k\}$ such that every two adjacent edges or two
edges adjacent to the same edge receive  distinct colors. In other
words, the graph induced by each color class is an induced matching.
 The {\em strong chromatic index} of $G$, denoted by
$\chi_s'(G)$, is the smallest integer $k$ such that $G$ admits a
  strong $k$-edge-coloring.

\medskip
Strong edge-coloring was introduced by Fouquet and Jolivet~\cite{FJ83,fj1984} and was used to solve the frequency assignment
problem in some radio networks.
 For more details on applications  see
\cite{BI+06,NK+00,Ram97,RL93}.

\medskip
An obvious upper bound on $\chi_s'(G)$ (given by a greedy coloring) is
$2\Delta(G)(\Delta(G)-1)+1$ where $\Delta(G)$ denotes the maximum
degree of $G$.
 Erd\H{o}s and Ne\v set\v ril \cite{e1988,en1989}
conjectured that for every graph $G$
 with maximum degree $\Delta$, 
$$
\chi_s'(G)\leq
\begin{cases}
\frac{5}{4}\Delta^2 &\mbox{if $\Delta$ is even}\\
\frac{5}{4}\Delta^2-\frac{\Delta}{2}+\frac{1}{4} &\mbox{if $\Delta$ is odd}
\end{cases}
$$
The bounds in the conjecture are sharp, if the conjecture is true.

\medskip
The first nontrivial upper bound on $\chi_s'(G)$ was given by Molloy
and Reed~\cite{MR97}, who showed that  $\chi_s'(G)\le 1.998 \Delta^2$,
if $\Delta$ is sufficiently large.
The coefficient $1.998$ was improved to $1.93$ (again, for
sufficiently large  $\Delta$) by Bruhn and Joos~\cite{BJ2015}. Recently, 
 Bonamy,  Perrett and Postle~\cite{BPP} announced an even better coefficient of~$1.835$.
For $\Delta=3$, the conjecture was settled independently by
Andersen~\cite{And92} and by Hor\'ak, Qing and~Trotter~\cite{HQT93}.
Cranston~\cite{Cra06} proved that
every graph with $\Delta \le 4$ admits a strong edge-coloring
with $22$ colors, which is $2$ more than the conjectured bound.


The strong chromatic index was studied  for various families of
graphs, such as cycles, trees, $d$-dimensional cubes, chordal graphs,
and Kneser graphs, see \cite{Mah00}. There was also a series of 
papers~\cite{FGST,HMRV2013,KLRSWY2016}
on strong edge-coloring planar graphs.
In particular,
Faudree,  Gy\'{a}rf\'{a}s, Schelp and Tuza~\cite{FGST} proved that 
$\chi_s'(G)\leq 4\Delta+4 $ for every planar graph $G$  
with maximum degree $\Delta$ and
 exhibited, for every
  integer $\Delta \geq 2$, a planar graph with maximum degree $\Delta$
  and strong chromatic index $4\Delta -4$. 
Borodin and Ivanova~\cite{BI13} showed that every planar graph $G$ with maximum degree $\Delta\geq 3$ and
girth $g\geq 40\lfloor \Delta/2\rfloor$ satisfies $\chi_s'(G)\leq 2\Delta-1$, and that the bound $2\Delta-1$ is sharp.
Chang,  Montassier,  Pecher and Raspaud~\cite{CMPR} relaxed the restriction on $g$ to $g\geq 10\Delta+46$ for
$\Delta\geq 4$. 

Hud\' ak,  Lu\v zar,  Sot\' ak and  \v Skrekovski~\cite{HLSS} proved that $\chi_s'(G)\leq 3\Delta+6$
for every planar graph $G$ with maximum degree $\Delta\geq 3$ and
girth $g\geq 6$. Recently, Bensmail,  Harutyunyan,  Hocquard and  Valicov~\cite{BHHV} improved the upper
bound $3\Delta+6$ for such graphs to $3\Delta+1$. With the stronger restriction of $g\geq 7$,
Ruksasakchai and Wang~\cite{RW} reduced the bound $3\Delta+1$ to $3\Delta$.

Clearly, planar graphs with large girth are sparse.
The problem of strong edge-coloring was also studied for general sparse graphs. A natural measure of sparsity is {\em degeneracy}: 
a graph $G$ is $d$-{\em degenerate} if every subgraph $G'$ of $G$
has a vertex of degree at most $k$ (in $G'$). Chang and Narayanan~\cite{CN13} proved that
$\chi_s'(G)\leq 10\Delta-10$ for every $2$-degenerate graph $G$ with maximum degree $\Delta\geq 2$. 
Luo and Yu~\cite{LY13} improved the bound $10\Delta-10$ to $8\Delta-4$. 
A more general bound by Yu~\cite{GY} allowed to reduce the bound for $2$-degenerate graphs to $6\Delta-5$, and Wang~\cite{TW} improved it to  $6\Delta-7$.

In this paper, we prove three bounds on the strong chromatic index of sparse graphs in terms of the maximum degree. 
Two of our bounds yield new bounds for planar graphs with  girths $6$ and $8$.

Our first result is on $2$-degenerate graphs. It improves the aforementioned bounds in~\cite{CN13,LY13,TW} for $\Delta\geq 9$.
\begin{theorem}\label{2deg}
Every $2$-degenerate graph $G$ with maximum  degree $\Delta$ satisfies $\chi'_s(G)\leq 5\Delta+1$. 
\end{theorem}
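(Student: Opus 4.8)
The plan is to fix an ordering $v_1,\dots,v_n$ of $V(G)$ witnessing $2$-degeneracy, so that each $v_i$ has at most two neighbours among $v_{i+1},\dots,v_n$; equivalently, orienting every edge towards its later endpoint yields an orientation in which every out-degree is at most $2$. I would then colour the edges greedily while scanning the vertices backwards, $v_n,v_{n-1},\dots,v_1$, colouring at step $v_i$ the (at most two) edges joining $v_i$ to its later neighbours. With this scheme each edge $v_iv_j$ with $i<j$ is coloured at step $v_i$, and at that moment the coloured edges are precisely those of $G[\{v_{i+1},\dots,v_n\}]$.

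The heart of the argument is to bound, for the edge $e=v_iv_j$ being coloured, not the number of already-coloured edges in conflict with $e$ (which can be of order $\Delta^2$ already in a tree), but the number of colours they occupy. Two facts drive the count: (i) any family of edges through a common vertex $w$ is a clique in the conflict graph, so among the edges at $w$ other than a fixed one there occur at most $\deg_G(w)-1\le\Delta-1$ colours; and (ii) in the chosen orientation every vertex has at most two out-neighbours. A conflicting coloured edge meets $N_G[v_i]\cup N_G[v_j]$, and splitting on which such vertex it meets, the contributions are: the other out-edge of $v_i$ (at most one colour); the edges through $v_j$ lying in $G[\{v_{i+1},\dots,v_n\}]$, which share $v_j$ and hence give at most $\Delta-1$ colours by (i); the edges through the at most two out-neighbours of $v_i$, at most $\Delta-1$ colours apiece by (i)--(ii); and likewise the edges through the at most two out-neighbours of $v_j$. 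This totals roughly $4\Delta$, leaving a colour free in a palette of size $5\Delta+1$.

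The main obstacle is the accounting around the larger endpoint $v_j$: the neighbours of $v_j$ with index strictly between $i$ and $j$ may be numerous, and coloured edges through two different such neighbours need not be in conflict, so fact (i) does not by itself confine them to $O(\Delta)$ colours. I expect this to be where the real work lies, and I see two ways to push it through. The first is to be selective about the ordering: by continuing the degeneracy ordering, whenever possible, through a vertex of degree at most $2$ that is adjacent to an already-processed vertex, one can arrange that at step $v_i$ the lower-indexed neighbours of $v_j$ carrying coloured edges are themselves out-neighbours of $v_i$, so that they fall under the bound already obtained, with the slack between $4\Delta$ and $5\Delta$ absorbing the residue of this reduction. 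The second, which can be run as a fallback in a minimal-counterexample formulation, is a short local recolouring: if $e$ genuinely sees more than $5\Delta$ colours, then $v_j$ has many in-neighbours whose up-edges are already coloured, and exchanging the colour of one such up-edge with the colour of a suitable edge at $v_j$ --- both of these families being controlled by fact (i) at $v_j$ together with the $2$-orientation --- frees a colour for $e$. Getting this last count to land at $5\Delta$ rather than $6\Delta$ or worse is the crux; the remaining estimates are routine once the ordering and the orientation are in place.
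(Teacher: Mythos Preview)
Your identification of the crux is exactly right: controlling the colours seen through the in-neighbours of $v_j$ whose index lies strictly between $i$ and $j$ is where a straight greedy argument stalls, and this is why the earlier greedy proofs for $2$-degenerate graphs (Chang--Narayanan, Luo--Yu, Yu, Wang) land near $6\Delta$ rather than $5\Delta$. Unfortunately neither of your proposed fixes closes the gap. For the first, the claim that one can arrange ``the lower-indexed neighbours of $v_j$ carrying coloured edges are themselves out-neighbours of $v_i$'' cannot hold in general: $v_i$ has at most one out-neighbour besides $v_j$, while $v_j$ may have up to $\Delta-2$ in-neighbours with index above $i$, and no local preference rule in building the degeneracy order forces all of them below $i$ simultaneously for every pair $(v_i,v_j)$. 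For the second, the swap you describe is not pinned down enough to show it frees a colour: edges through two different in-neighbours of $v_j$ need not be in conflict with one another, so recolouring one of them does not obviously decrease the colour count at $e$; and there is no evident common vertex at which a safe exchange can be performed.

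The paper does use a recolouring, but it is set up quite differently and hinges on a self-strengthening of the hypothesis. One proves instead Theorem~\ref{lem: 2-degenerate}: if $\Delta(G)\le D+2$ and every vertex of degree $D+t$ has at least $t$ pendant neighbours, then $\chi_s'(G)\le 5D+1$. In a minimal counterexample, strip all leaves to get $G^*$; by Lemma~\ref{lem: 2 degen} there is a vertex $v$ with $d_{G^*}(v)\ge 3$ having at most two $3^+_{G^*}$-neighbours $v_1,v_2$, all other $G^*$-neighbours $u_1,\dots,u_t$ being $2$-vertices. One deletes $vu_1$, \emph{adds pendant edges at $w_1$} (the other neighbour of $u_1$) so that the strengthened hypothesis is preserved and $w_1$ acquires at least three pendant edges, colours the smaller graph by minimality, and then permutes the colours on the interchangeable pendant edges at $w_1$ so that $f(w_1u_1)\notin\{f(vv_1),f(vv_2)\}$. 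The count for $vu_1$ is then
\[
|N_G^2[vu_1]|\le d_G(v_1)+d_G(v_2)+(d_G(w_1)-1)+\sum_{i=1}^t d_G(u_i)\le 3(D+2)-1+2t\le 5D+1,
\]
because the $u_i$'s have degree~$2$ and $t\le D-2$. The two ingredients your plan lacks are thus (a) the structural lemma that locates a vertex whose neighbours are almost all of degree~$2$, which is what replaces your uncontrolled set of in-neighbours of $v_j$, and (b) the strengthened hypothesis that guarantees enough pendant edges at $w_1$ to make the colour swap both legal and effective.
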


A finer measure of sparsity is
the {\em maximum average degree}, denoted $\Mad(G)$, which is the maximum of $2\frac{|E(G')|}{|V(G')|}$ over all
nontrivial subgraphs $G'$ of a graph $G$. By definition, $\Mad(G)<4$ for every $2$-degenerate graph $G$.
Two of our  results show that if $\Mad(G)<3$, then
we can use significantly fewer than $5\Delta$ colors.
The graphs $K_{\Delta}(t)$ defined below
show that  our bounds are almost optimal.
Let $K_{\Delta}(t)$ be the graph obtained from $K_t$ by adding $\Delta-t+1$ pendant edges to each vertex in $K_t$. 
It is easy to check that $\Mad(K(t))=t-1$ and $\chi'_s(K_{\Delta}(t))=|E(K_{\Delta}(t))|=t\Delta-{t\choose 2}$.
In particular, 
\begin{itemize}
\item $\Mad(K_{\Delta}(2))=1$ and $\chi'_s(K_{\Delta}(2))=2\Delta-1$,
\item $\Mad(K_{\Delta}(3))=2$ and $\chi'_s(K_{\Delta}(3))=3\Delta-3$, 
\item $\Mad(K_{\Delta}(4))=3$ and $\chi'_s(K_{\Delta}(4))=4\Delta-6$.
\end{itemize}

Our second result is: 
\begin{theorem}\label{8:3}
Let $\Delta \geq 9$ be an integer.
Every graph $G$ with maximum average degree less than $8/3$ and maximum degree at most $\Delta$ satisfies $\chi'_s(G)\leq 3\Delta-3$.
\end{theorem}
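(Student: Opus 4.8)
The plan is to argue by contradiction via the discharging method. Suppose the theorem fails and let $G$ be a counterexample minimizing $|V(G)|+|E(G)|$; thus $\Delta(G)\le\Delta$, $\Mad(G)<8/3$, and $\chi'_s(G)>3\Delta-3$, while every graph obtained from $G$ by deleting one or more edges, or a vertex, satisfies the same hypotheses (its maximum degree and maximum average degree do not increase) and is strictly smaller, hence admits a strong $(3\Delta-3)$-edge-coloring. We may assume $G$ is connected with no isolated vertex. I will first establish a list of \emph{reducible configurations} that $G$ cannot contain, and then reach a contradiction by discharging.

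The basic reducibility tool is: if $e=uv\in E(G)$ has fewer than $3\Delta-3$ edges of $G$ within distance $1$ of it, then deleting $e$, recoloring $G-e$ by minimality, and greedily coloring $e$ yields a strong $(3\Delta-3)$-edge-coloring of $G$, a contradiction; in particular this applies whenever $(d(u)-1)+(d(v)-1)+\sum_{w\in N(u)\setminus\{v\}}(d(w)-1)+\sum_{w\in N(v)\setminus\{u\}}(d(w)-1)<3\Delta-3$. Counting and using $\Delta\ge9$, this already forbids, among other things: any edge joining two vertices of degree at most $2$ (so every $2$-vertex has two neighbors of degree $\ge3$, and every $1$-vertex has a neighbor of degree $\ge3$); a $3$-vertex adjacent to a $1$-vertex; and a $3$-vertex with two neighbors of degree $2$ (so every $3$-vertex has at most one $2$-neighbor and no pendant neighbor, whence every $1$-vertex has a neighbor of degree $\ge4$). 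For the deeper configurations, the idea is to delete a carefully chosen set $F$ of edges — for instance all pendant edges at one vertex, or at two adjacent vertices of high degree — recolor $G-F$ by minimality, and extend to $F$: the edges of $F$ form a structured subgraph of the conflict graph (the square of the line graph of $G$), so the extension reduces to a greedy, or a Hall-type (system of distinct representatives), coloring of $F$, in which $\Delta\ge9$ supplies the slack. One also shows that no vertex has all but one of its neighbors pendant, and bounds how many $1$- and $2$-valent vertices may cluster around a vertex of degree $4$ or $5$.

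For the discharging, assign to each vertex $v$ the charge $\mu(v)=d(v)-\tfrac83$; applying $\Mad(G)<8/3$ to $G$ itself gives $\sum_v\mu(v)=2|E(G)|-\tfrac83|V(G)|<0$. Then redistribute charge by rules of the following flavor: every $2$-vertex takes $\tfrac13$ from each of its two (degree $\ge3$) neighbors; every $1$-vertex takes $\tfrac53$ from its (degree $\ge4$) neighbor; and vertices of large degree pass additional charge, possibly along short paths, to nearby low-degree vertices. Using the reducible configurations, one verifies that the new charge $\mu^*(v)$ is nonnegative for every $v$: a $2$-vertex ends at $0$; a $3$-vertex, having at most one $2$-neighbor and no pendant neighbor, ends at $\tfrac13-\tfrac13=0$; and a vertex of degree $\ge4$ keeps nonnegative charge because the structural restrictions cap the number of expensive ($1$- and $2$-valent) neighbors it must support. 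Then $\sum_v\mu^*(v)\ge0$, contradicting $\sum_v\mu(v)<0$.

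I expect the main obstacle to be the near-extremal configurations around the sharpness example $K_\Delta(3)$. A $\Delta$-vertex with roughly $\Delta-2$ pendant (or $2$-valent) neighbors is \emph{not} reducible by deleting a single edge, and such a vertex carries no spare charge of its own, so the discharging must route enough charge into it from its few neighbors of large degree — which in turn must be prevented, by an appropriate reducible configuration, from being themselves $\Delta$-vertices laden with pendants or $2$-vertices. Identifying exactly which multi-edge deletions are reducible (through the Hall/SDR argument) and matching them to discharging rules that survive these tight spots is the heart of the proof, and it is here that the precise hypotheses $\Mad(G)<8/3$ and $\Delta\ge9$ are used; the graphs $K_\Delta(t)$ indicate that neither can simply be relaxed.
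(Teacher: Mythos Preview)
Your proposal correctly identifies the overall architecture (minimal counterexample, reducible configurations, discharging) and even correctly flags the main obstacle, but it does not resolve that obstacle, and the scheme you outline cannot be completed as written.

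The concrete gap is this. With the rule ``every $1$-vertex takes $5/3$ from its neighbor'', a vertex $v$ of degree $\Delta$ with $\Delta-2$ pendant neighbors must give away $\tfrac{5}{3}(\Delta-2)$ while it only carries $\Delta-\tfrac{8}{3}$; the deficit is $\tfrac{2\Delta-2}{3}$, which for $\Delta=9$ is already $16/3$. You propose to route this charge in from the two remaining neighbors of $v$, but the sharpness example $K_\Delta(3)$ shows exactly why this is impossible: there the three high-degree vertices form a triangle, each with $\Delta-2$ pendants, so every one of them is in deficit and none has anything to spare. You cannot rule this local picture out as a reducible configuration either, since $K_\Delta(3)$ itself has $\chi'_s=3\Delta-3$ on the nose. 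Hence neither the reducibility side nor the charging side of your plan can close the argument around these configurations. Your suggested Hall/SDR extensions after deleting a batch of pendant edges do not help here: deleting all pendants at a triangle vertex leaves the other $2\Delta-3$ edges of $K_\Delta(3)$ still needing pairwise distinct colors, and then each pendant edge sees all of them, so you are forced to use $3\Delta-3$ colors with no slack.

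The paper sidesteps this entirely. It works not in $G$ but in $G^*$, the graph obtained by deleting all $1$-vertices, and it takes the minimal counterexample with respect to the number of $2^+$-vertices (then edges), \emph{not} $|V|+|E|$. This choice of minimality is essential: the key reductions (Claims~4.2 and~4.3) modify $G$ by deleting an edge and then \emph{adding} new leaves to push certain vertices up to degree $\Delta$; the resulting graph has more vertices and edges but strictly fewer $2^+$-vertices, so the induction applies. Those extra leaves are then used to \emph{permute colors among pendant edges} so as to avoid a single specified conflict, after which the missing edges can be greedily recolored. With this device the paper proves that any two adjacent $2_{G^*}$-vertices are each adjacent to a $(\Delta-1)^+_{G^*}$-vertex (``special''), and that a $3_{G^*}$-vertex with two $2_{G^*}$-neighbors has at least one special among them. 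The discharging is then done in $G^*$ with target $8/3$, where $(\Delta-1)^+_{G^*}$-vertices send $2/3$ to each neighbor; special $2_{G^*}$-vertices get their $2/3$ directly from their high-degree neighbor, and $3_{G^*}$-vertices only pay non-special $2$-neighbors. None of these mechanisms---$G^*$, the $2^+$-vertex minimality, the leaf-adding trick, or the ``special'' vertex dichotomy---appear in your sketch, and I do not see how to finish the proof without something playing their role.
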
 

The graph $K_{\Delta}(3)$ above shows that the bound $3\Delta-3$ is best possible. 
The graph $K'_{\Delta}(4)$ defined below shows that the bound on 
the maximum average degree is close to optimal:\\
We start from a copy $R$ of $K_4$ with vertex set $\{v_1,v_2, v_3,v_4\}$ and
 let $K'_{\Delta}(4)$ be the graph  obtained from $R$  by subdividing  the edge  $v_3v_4$ with a vertex $u$
and then adding $\Delta-3$ pendant edges to each of $v_1,v_2,v_3$. 
It is not hard to check that the maximum degree of $K'_{\Delta}(4)$ is $\Delta$,
  $\Mad(K'_{\Delta}(4))= 14/5$, and $\chi'_s(K'_{\Delta}(4)) = 3\Delta -2$.

Our last result is: 
\begin{theorem}\label{mad3}
Let $\Delta \geq 7$ be an integer.\footnote{We can prove the result for  $\Delta\geq 6$, but that would make the proof longer and more complicated.} 
Every graph $G$ with maximum average degree less than $3$ and maximum degree at most $\Delta$ satisfies $\chi'_s(G)\leq 3\Delta$.
\end{theorem}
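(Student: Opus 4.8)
The plan is a reducibility-plus-discharging argument. Suppose the theorem fails and let $G$ be a counterexample with $|V(G)|+|E(G)|$ as small as possible; then $G$ is connected, $\Delta(G)\le\Delta$, and every proper subgraph of $G$ admits a strong $3\Delta$-edge-coloring. I will first prove a list of \emph{forbidden configurations} that cannot occur in $G$, and then use them together with $\Mad(G)<3$ to reach a contradiction by discharging. Assign to each vertex $v$ the charge $\mu(v)=d(v)-3$, so that $\sum_v\mu(v)=2|E(G)|-3|V(G)|<0$ since $\Mad(G)<3$; after redistributing charge by a fixed set of rules one shows that every vertex ends with nonnegative charge, contradicting $\sum_v\mu(v)<0$.

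For the reductions the key quantity is the number of colors forbidden for an uncolored edge $uv$, which is at most the number of edges meeting $uv$ or adjacent to an edge meeting $uv$; a direct count bounds this by $(d(u)+d(v)-2)\Delta$. Hence, given a strong $3\Delta$-edge-coloring of $G$ with some edges deleted, any deleted edge $uv$ with $d(u)+d(v)\le 4$ (in $G$) can be restored and colored greedily. This immediately rules out the easy configurations: $G$ has no $K_2$ component, the neighbor of every $1$-vertex has degree at least $4$ (delete the pendant edge), and no $2$-vertex is adjacent to a $1$- or $2$-vertex — for two adjacent $2$-vertices one deletes the edge joining them, and when it is restored both endpoints again have degree $2$, so its strong degree is at most $2\Delta$. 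The delicate part is the boundary cases with $d(u)+d(v)=5$, namely a $1$-vertex at a $4$-vertex and a $2$-vertex at a $3$-vertex, where the crude count gives exactly $3\Delta$ forbidden colors; here one must save an extra color by exploiting further local structure (a second small-degree neighbor of the larger endpoint, or an edge inside its neighborhood, lowers the count) or by simultaneously recoloring another edge of small strong degree. I expect this to be the main obstacle and the source of most of the case analysis, precisely because an edge incident to a vertex of degree close to $\Delta$ has strong degree as large as $\Theta(\Delta^2)$ and can never be recolored greedily, so each reducible configuration has to be set up so that only edges at small-degree vertices are ever touched.

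For the discharging, vertices of degree at least $4$ carry positive charge and are the only donors; a $1$-vertex must receive $2$ and a $2$-vertex must receive $1$, while $3$-vertices are neutral and, by the reductions, act only as conductors (never adjacent to $1$-vertices, and adjacent to $2$-vertices only in controlled ways). The rules will be along the following lines: a vertex of degree $d\ge4$ sends a fixed amount along each incident edge to a neighbor of degree at most $2$, a smaller amount at distance two through a $3$-vertex adjacent to a $2$-vertex, and — to feed the boundary configuration of a $1$-vertex at a $4$-vertex — a degree-$4$ vertex with a pendant neighbor first collects charge from its remaining neighbors, which the reductions force to have degree at least $4$. One then verifies, configuration by configuration, that every $1$- and $2$-vertex is surrounded by enough high-degree mass to reach $0$ and that no donor is drained below $0$; it is in this last check that the hypothesis $\Delta\ge 7$ (rather than $6$) supplies the slack that keeps the arithmetic clean, as the footnote to the statement indicates. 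Since charge is only moved, $0\le\sum_v\mu^*(v)=\sum_v\mu(v)<0$, a contradiction, which completes the proof.
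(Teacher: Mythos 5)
Your overall architecture (minimal counterexample, reducible configurations, discharging against $\Mad(G)<3$) is the right genre, but there is a concrete structural error in the discharging and the reducibility half is only announced, not proved. The error: you discharge on $G$ itself with $\mu(v)=d(v)-3$ and require every $1$-vertex to end up with charge $0$, i.e.\ to receive $2$. But a vertex of degree $\Delta$ with many pendant neighbours is \emph{not} a reducible configuration --- it occurs in the extremal examples $K_{\Delta}(3)$ and $K'_{\Delta}(4)$, and an edge incident to it has strong degree $\Theta(\Delta^2)$, so no local recolouring removes it --- yet such a vertex has charge $\Delta-3$ and would owe up to $2(\Delta-1)$ to its pendant neighbours. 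A star already shows the total charge in such a neighbourhood is genuinely negative, so no set of rules can make every vertex nonnegative there. The paper avoids this by deleting all $1$-vertices to form $G^*$, noting $\Mad(G^*)\le\Mad(G)$, and running the entire density/discharging argument on $G^*$ with initial charge $d_{G^*}(v)$ and target $3$; the reductions (Claim 5.3) then show that low-degree vertices \emph{of $G^*$} have no pendant neighbours in $G$, so pendant edges only hang off vertices that are irrelevant to the discharging. Your scheme needs this restructuring before the arithmetic can even be attempted.

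Second, the step you flag as ``the main obstacle'' is in fact the bulk of the proof, and the tools you gesture at (saving one colour locally, or recolouring one nearby edge) are not enough. The paper's unavoidable configurations involve chains of low-degree vertices (poor and very poor $2$- and $3$-vertices, their sponsors and rivals), and their reducibility requires: (a) uncolouring whole prescribed families of edges (sinks, lower links, semi-links) and recolouring them in a fixed order, formalized via $(f,k)$-degenerate sequences (Lemma 2.1); (b) modifying $G$ by deleting \emph{and adding} edges or leaves, whose admissibility (preserving $\Mad\le 3$ and the absence of $3$-regular subgraphs) is verified with the potential $\rho_G(A)=3|A|-2|E(G[A])|$ and its submodularity (Lemma 5.2, Claim 5.5); and (c) permuting colours on pendant edges at a high-degree vertex to dodge one specific conflict. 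None of this is present in your outline, so as written the proposal is a plausible plan with a broken discharging phase and an unproved reducibility phase, not a proof.
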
 

Note that  for small $\Delta$, namely for $\Delta\leq 4$, the slightly weaker bound  of $3\Delta+1$ was proved by
Ruksasakchai and Wang~\cite{RW}.
Since  $\Mad(K_{\Delta}(4))=3$ and $\chi'_s(K_{\Delta}(4))=4\Delta-6$,
 the restriction on the maximum average degree in Theorem~\ref{mad3}  is best possible for $\Delta\geq 7$.
 The graph $K'_{\Delta}(4)$ above with $\Mad(K'_{\Delta}(4))= 14/5$ and $\chi'_s(K'_{\Delta}(4)) = 3\Delta -2$
shows that the bound $3\Delta$ is also close to the best possible.

Since $\Mad(G)<\frac{2g}{g-2}$ for every planar graph $G$ with girth $g$, Theorem~\ref{8:3} yields that 
 $\chi_s'(G)\leq 3\Delta-3$
for every planar graph $G$ with maximum degree $\Delta\geq 9$ and
girth $g\geq 8$ and Theorem~\ref{mad3} implies that
 $\chi_s'(G)\leq 3\Delta$
for every planar graph $G$ with maximum degree $\Delta\geq 7$ and
girth $g\geq 6$. The last result improves the  bounds in~\cite{BHHV,HLSS,RW} mentioned above for $\Delta\geq 7$.

The structure of the paper is as follows. In the next section   we introduce some notation and  prove useful lemmas. In Section 3, 4 and 5, we prove Theorem \ref{2deg}, Theorem \ref{8:3} and Theorem \ref{mad3}, respectively. 

\section{Notation and preliminaries }
Let $[k]:=\{1,\dots, k\}$. For a function $f$ defined on a set $A'$ with $A'\subseteq A$, we denote $f(A):= f(A')=\{ f(a) : a \in A' \}$.  For a graph $G$, let $\overline{d}(G)$ 
be the average degree of $G$. We define $\Mad(G):=\max_{H\subseteq G} \overline{d}(H)$.
A vertex $v\in V(G)$ is a $d^+_G$-{\em vertex}  if $d_G(v)\geq d$. If $G$ is clear from the context, then we simply say that $v$ is a $d^+$-{\em vertex}.
A $d^+_G$-{\em neighbor} of a vertex
$v\in V(G)$ is a neighbor of $v$ that is a $d^+_G$-vertex.
A $d^-_G$-{\em vertex}, a  $d_G$-{\em vertex}, a $d^-_G$-{\em neighbor}, and a  $d_G$-{\em neighbor} are defined similarly.

For a vertex $v\in V(G)$, let $N_{G}(v)$ denote the set of all neighbors of $v$ in $G$ and let $\Gamma_{G}(v)$ denote
the set of all edges incident to $v$ in $G$. For an edge $e=uv$, let $$N_{G}[e]:= \Gamma_{G}(u)\cup \Gamma_{G}(v) \text{ and } N_{G}^2[e]:= \bigcup_{w \in N_{G}(u)\cup N_{G}(v)} \Gamma_{G}(w).$$ 
A function $f:E(G)\rightarrow [k]$ is a {\em strong $k$-edge-coloring} of $G$ if  $f(e)\neq f(e')$ for any $e,e'\in E(G)$ with $e' \in N_{G}^2[e]\setminus\{e\}$. 
Since below we only consider strong edge-colorings, for brevity we will simply call them  {\em colorings}.
A function $f : E'\rightarrow [k]$ is a {\em partial $k$-coloring} of $G$ on $E'$ if $E'\subseteq E(G)$ and $f(e)\neq f(e')$ for any $e,e'\in E'$ 
with $e' \in N_{G}^2[e]\setminus\{e\}$. 
For a  partial $k$-coloring $f:E'\rightarrow[k]$ of a graph $G$ and $e\in E(G)$, let the {\em $f$-multiplicity of $e$}, $m(f,e)$, 
  be $m(f,e):= |N_{G}^2[e]\cap E'| - |f(N_{G}^2[e])|$. 
  Note that $m(f,e)$ counts multiple occurrences of  all colors in $N_{G}^2[e]$. 
  By definition, $|f(N_{G}^2[e])| = |N_{G}^2[e]\cap E'| - m(f,e)$. In particular, if $N_{G}^2[e]$ contains two edges with the same color, then $m(f,e)\geq 1$.

For a partial $k$-coloring $f : E \rightarrow [k]$ and $e',e''\in E$, we often say ``we extend $f$ to $e'$ by coloring it with a color $\alpha$''. This means that we replace $f$ with a new function $f':E \cup\{e'\} \rightarrow [k]$ such that $f'(e):= f(e)$ for all $e\in E\setminus\{e'\}$ and $f'(e'):= \alpha$. 
Also, we say ``we switch the colors of $e$ and $e'$'' when we replace $f$ with a new function $f':E\rightarrow [k]$ such that $f'(e):= f(e)$ for all $e\in E\setminus\{e',e''\}$, $f'(e'):=f(e'')$ and $f'(e''):= f(e')$. In both cases, we will slightly abuse the notation by denoting the new updated function by $f$.

For a partial $k$-coloring $f$ of $G$, we say that a sequence $(E_1,E_2,\dots, E_s)$ of pairwise disjoint subsets of $E(G)$ is an {\em $(f,k)$-degenerate sequence for $G$} if the following holds:
\begin{itemize}
\item $f : E(G)\setminus( \bigcup_{i=1}^{s} E_i) \rightarrow [k]$ is a partial $k$-coloring of $G$.
\item For every $i\in[s]$ and $e\in E_i$, $|N_{G}^2[e] \setminus \bigcup_{j=i+1}^{s} E_j|\leq k + m(f,e)$.
\end{itemize}

Note that if $(E_1,E_2,\dots, E_s)$ is an $(f,k)$-degenerate sequence for $G$, then the domain of $f$ is exactly $ E(G)\setminus( \bigcup_{i=1}^{s} E_i)$, thus $\bigcup_{i=1}^{s} E_i$ is exactly the set of all edges of $G$ uncolored by $f$.
For a partial $k$-coloring $f$ of a graph $G$,  the graph $G$ is {\em $(f,k)$-degenerate} if there exists an 
$(f,k)$-degenerate sequence $(E_1,\dots, E_s)$ for $G$. If $E_{i} = \{e_i\}$, then for simplicity, instead of $(E_1,\dots, E_s)$, we 
write $(E_1,\dots, E_{i-1}, e_{i}, E_{i+1}, \dots, E_s)$.

The following lemma regarding degeneracy is useful.

\begin{lemma}\label{lem: degenerate}
If a graph $G$ has a partial $k$-coloring $f$ and is $(f,k)$-degenerate, then $\chi'_{s}(G)\leq k$.
\end{lemma}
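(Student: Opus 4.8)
The plan is to extend the partial coloring $f$ to all of $E(G)$ one block at a time, processing the blocks of an $(f,k)$-degenerate sequence $(E_1,\dots,E_s)$ in reverse order $i=s,s-1,\dots,1$. More precisely, I would prove by downward induction on $i$ the statement that for each $i$ there is a partial $k$-coloring $f_i$ with domain $E(G)\setminus\bigcup_{j=1}^{i}E_j$ that extends $f=f_s$ (viewing $f$ as $f_s$ with the convention that $\bigcup_{j=1}^{s}E_j$ is exactly the set of $f$-uncolored edges). The base case $i=s$ is just the hypothesis that $f$ is a partial $k$-coloring. For the inductive step, assuming $f_i$ is defined, I would further color the edges of $E_i$ one by one (in any order) to obtain $f_{i-1}$; the final coloring $f_0$ then has domain $E(G)$ and is a strong $k$-edge-coloring, giving $\chi'_s(G)\le k$.

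The key step is to check that each edge $e\in E_i$ can indeed be colored when it is reached. When we are about to color $e$, all edges of $\bigcup_{j=i+1}^{s}E_j$ already carry colors (from the induction, folded into $f_i$ and its partial extensions), and some edges of $E_i$ — those processed before $e$ within this block — may also already be colored; call the current partial coloring $g$. The colors forbidden for $e$ are exactly those appearing on edges of $N^2_G[e]\setminus\{e\}$ that are currently colored. The number of such colored edges is at most $|N^2_G[e]\setminus\bigcup_{j=i+1}^{s}E_j| \le k+m(f,e)$ by the defining property of the degenerate sequence. However, this counts edges, not distinct colors: among the edges of $N^2_G[e]$ that were already colored in the original domain of $f$, exactly $m(f,e)$ repetitions of colors occur, so those contribute at most $|N^2_G[e]\cap \mathrm{dom}(f)|-m(f,e)$ distinct forbidden colors, while the at most $|N^2_G[e]\cap(\bigcup_{j\ge i+1}E_j\cup(\text{earlier edges of }E_i))|$ newly colored edges contribute at most that many distinct colors. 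Summing, the number of distinct forbidden colors is at most $k+m(f,e)-m(f,e)=k$... wait, one must be slightly careful: the bound $k+m(f,e)$ already includes $e$ itself if $e\in N^2_G[e]$, so in fact the count of other colored edges in $N^2_G[e]$ is at most $k+m(f,e)-0$ but the $m(f,e)$ repeated colors are all accounted for among the $f$-colored part, leaving at most $k$ distinct colors forbidden. Hence at least one color in $[k]$ is available for $e$, and we extend $g$ to it.

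The one genuine subtlety — and the main thing to get right — is the bookkeeping around $m(f,e)$: the inequality in the degenerate-sequence definition uses $m(f,e)$, which is computed with respect to the \emph{original} partial coloring $f$, but the edges we color later (in $\bigcup_{j\le i}E_j$) could in principle introduce new color collisions inside $N^2_G[e]$ that make more than $m(f,e)$ colors coincide, which would only \emph{help} us (fewer distinct forbidden colors), never hurt. So the worst case is precisely when every newly colored edge in $N^2_G[e]$ receives a fresh color, and then the count of distinct forbidden colors is $\bigl(|N^2_G[e]\cap\mathrm{dom}(f)|-m(f,e)\bigr)+\bigl(|N^2_G[e]\setminus\mathrm{dom}(f)|-|\{e\}\cap(N^2_G[e]\setminus\mathrm{dom}(f))|\bigr)$, which telescopes to at most $k+m(f,e)-m(f,e)-\big(\text{contribution of }e\big)\le k$. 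I would lay this out as a short displayed inequality chain rather than prose. Everything else is routine: disjointness of the $E_i$ guarantees each edge is colored exactly once, and the reverse processing order is exactly what the second bullet of the degenerate-sequence definition is designed to support.
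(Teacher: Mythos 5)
Your proposal processes the blocks in the wrong order, and this breaks the key counting step. The defining inequality for an $(f,k)$-degenerate sequence, $|N_{G}^2[e]\setminus\bigcup_{j=i+1}^{s}E_j|\le k+m(f,e)$ for $e\in E_i$, \emph{excludes} the later blocks $E_{i+1},\dots,E_s$ from the count; it is therefore only useful if those blocks are still \emph{uncolored} at the moment $e$ is colored, i.e.\ if one colors $E_1$ first, then $E_2$, and so on. The paper does exactly this: it orders the uncolored edges so that all of $E_j$ precedes all of $E_{j+1}$ and colors them greedily in that order, so that when $e\in E_j$ is reached the colored edges of $N_G^2[e]$ lie inside $N_G^2[e]\setminus(\{e\}\cup\bigcup_{\ell=j+1}^s E_\ell)$ and the bound gives at most $k-1$ forbidden colors. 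You instead color $E_s$ first and $E_1$ last. Then, when you reach $e\in E_i$, the edges of $N_G^2[e]\cap\bigcup_{j=i+1}^{s}E_j$ are already colored, and your claim that ``the number of such colored edges is at most $|N^2_G[e]\setminus\bigcup_{j=i+1}^{s}E_j|$'' is false: the currently colored set \emph{contains} precisely the edges that this expression throws away. Concretely, take $s=2$, $E_1=\{e\}$, and $E_2$ a large set of edges of $N_G^2[e]$: the definition only constrains $|N_G^2[e]\setminus E_2|$, so after coloring $E_2$ first there may be more than $k$ distinct colors already present in $N_G^2[e]$ and no color left for $e$.

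A secondary issue: even in your own accounting you conclude that ``at most $k$ distinct colors'' are forbidden and then assert a color is available; you need the bound $k-1$, which in the paper comes from subtracting the term $|\{e\}|$ explicitly (together with $m(f,e)\le m(f_{i-1},e)$). Your parenthetical about the ``contribution of $e$'' gestures at this but the inequality chain as written does not deliver it. Reversing your processing order to $E_1,E_2,\dots,E_s$ and redoing the count carefully recovers the paper's argument.
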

\begin{proof}
Assume we have a partial $k$-coloring $f$ of $G$ with domain $E_0$ and $(E_1,\dots, E_s)$ is an $(f,k)$-degenerate sequence on $G$. 
Let $S=(e_1,\dots, e_t)$ be an ordering of all edges in $\bigcup_{i=1}^{s} E_i$ such that for $j\in[s-1]$, all edges in $E_j$ come before any edge in $E_{j+1}$.

We iteratively color edges in $S$ in order to extend $f$ to $f_1,\dots, f_t$. Assume that we have colored $e_1,\dots, e_{i-1}$ for $i\in [t]$ and
have a partial $k$-coloring $f_{i-1}$ on $E_0\cup \{e_1,\dots, e_{i-1}\}$. Let $e_i\in E_j$.
Note that $m(f,e)\leq m(f_{i-1},e)$ for any $e\in E(G)$.
Then since $(E_1,\dots, E_s)$ is an $(f,k)$-degenerate sequence for $G$,  
\begin{align*}
\left|f_{i-1}(N_{G}^2[e_i])\right| 
&= \left|N_{G}^2[e_i] \setminus \{e_i,\dots, e_{t}\}\right|- m(f_{i-1},e_i) 
\leq  \left|N_{G}^2[e_i] \setminus \left(\{e_i\}\cup \bigcup_{\ell=j+1}^s E_{\ell}\right) \right| - m(f,e_i) \\
& =\left|N^2_{G}[e_i]\setminus \bigcup_{\ell=j+1}^sE_\ell\right| -|\{e_i\}| -m(f,e_i) 
\leq k-1.
\end{align*}
Thus we can choose a color $c \in [k] \setminus f_{i-1}(N_{G}^2[e_i])$.
Let 
$$f_i(e) := \left\{ \begin{array}{ll}
f_{i-1}(e) & \text{ if } e\in E_0\cup \{e_1,\dots, e_{i-1}\},\\
c & \text{ if } e=e_i.
 \end{array}\right.$$
Now, $f_{i}$ is  a partial $k$-coloring of $G$ with domain $E_0\cup \{e_1,\dots, e_i\}$. 
By repeating this process, we get a strong $k$-edge-coloring of $G$. 
Thus $\chi'_{s}(G)\leq k$.
\end{proof}

We say a vertex $u$ is {\em pale in $G$} if 
$u$ has at most two $3^+_G$-neighbors.
A vertex $u$ is {\em light in $G$} if all vertices in $N_{G}(u)$ except at most two are pale. Since a vertex with degree two is  pale, each pale vertex is also light.

\begin{lemma}\label{lem: 3-reg light}
If $G'$ is a  subgraph of a graph $G$ with $\delta(G')\geq 3$, then $G'$ has no vertex that is light in $G$.
\end{lemma}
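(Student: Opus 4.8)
The plan is to argue by contradiction, exploiting the fact that minimum degree at least $3$ in $G'$ forces every vertex of $G'$ to have at least three $3^+_G$-neighbors, which in turn prevents any vertex of $G'$ from being pale in $G$.

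First I would isolate the following elementary observation: \emph{no vertex of $G'$ is pale in $G$}. Indeed, let $w \in V(G')$. Since $\delta(G') \geq 3$, the vertex $w$ has at least three neighbors in $G'$, and because $G'$ is a subgraph of $G$, each such neighbor $x$ satisfies $d_G(x) \geq d_{G'}(x) \geq \delta(G') \geq 3$. Hence $w$ has at least three $3^+_G$-neighbors, so by definition $w$ is not pale in $G$.

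Next, suppose toward a contradiction that some vertex $u \in V(G')$ is light in $G$. Using $\delta(G') \geq 3$ once more, $u$ has at least three neighbors in $G'$; pick three of them, say $w_1, w_2, w_3$. Each $w_i$ lies in $V(G')$, so by the observation above none of $w_1, w_2, w_3$ is pale in $G$. Consequently $u$ has at least three neighbors in $G$ that are not pale, contradicting the definition of $u$ being light in $G$ (namely, that all but at most two vertices in $N_G(u)$ are pale). This contradiction finishes the proof.

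I do not expect any genuine obstacle here: the argument is short once the pale-vertex observation is extracted. The only point requiring care is bookkeeping — consistently distinguishing degrees in $G$ from degrees in $G'$ and invoking the monotonicity $d_G(x) \geq d_{G'}(x)$ for $x \in V(G')$ at the right moments.
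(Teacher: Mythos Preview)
Your proof is correct and follows essentially the same approach as the paper. The only cosmetic difference is that you extract the observation ``no vertex of $G'$ is pale in $G$'' as a preliminary step, whereas the paper derives it inline after first locating a pale neighbor $w\in N_{G'}(v)$ guaranteed by the lightness of $v$; the underlying logic is identical.
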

\begin{proof} 
Assume   $G$ contains a subgraph $G'$ with $\delta(G')\geq 3$ and $v\in V(G')$ is light in $G$. 
Then 
 $|N_{G'}(v)|\geq 3$. So by the definition of ``light", there is a pale $w\in N_{G'}(v)$. Similarly, 
 $|N_{G'}(w)|\geq 3$. So by the definition of ``pale", some $u\in N_{G'}(w)$ is a $2^-_G$-vertex,
 contradicting $\delta(G')\geq 3$.
\end{proof}

\begin{lemma}\label{lem: 2 degen}
Every $2$-degenerate graph $G$ with  $\Delta(G)\geq 3$ has a vertex $v$ of degree at least three 
that is adjacent to at most two vertices of degree at least three.
\end{lemma}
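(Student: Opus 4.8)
The plan is to pass to the subgraph of $G$ spanned by its vertices of degree at least three and then invoke $2$-degeneracy on that subgraph.

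First I would set $V_{\geq 3} := \{u\in V(G): d_G(u)\geq 3\}$ and let $H$ be the subgraph of $G$ induced by $V_{\geq 3}$, i.e.\ the subgraph whose vertex set is $V_{\geq 3}$ and whose edges are all edges of $G$ joining two vertices of $V_{\geq 3}$. Since $\Delta(G)\geq 3$, the set $V_{\geq 3}$ is nonempty, so $H$ is a nontrivial subgraph of $G$.

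Next, because $G$ is $2$-degenerate and $H$ is a (nontrivial) subgraph of $G$, the graph $H$ has a vertex $v$ with $d_H(v)\leq 2$. By construction $v\in V_{\geq 3}$, hence $d_G(v)\geq 3$; moreover the neighbors of $v$ in $H$ are exactly those neighbors of $v$ in $G$ that have degree at least three in $G$, so $v$ is adjacent in $G$ to at most $d_H(v)\leq 2$ vertices of degree at least three. This $v$ is the vertex required by the lemma.

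There is essentially no obstacle here. The only point requiring any care is checking that $H$ is nonempty so that the degeneracy hypothesis can be applied to it, and this is precisely where the assumption $\Delta(G)\geq 3$ enters. The fact that $H$ itself admits a vertex of degree at most two is then immediate from the definition of $2$-degeneracy, since $H$ is a subgraph of $G$.
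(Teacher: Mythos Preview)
Your proof is correct and is essentially identical to the paper's: both pass to the induced subgraph on the $3^+$-vertices, note it is nonempty because $\Delta(G)\geq 3$, and apply $2$-degeneracy to find a vertex of degree at most two there. The paper's write-up is slightly terser but the argument is the same.
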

\begin{proof} Let $V'$ denote the set of $3^+$-vertices. Since $\Delta(G)\geq 3$, we know that $V'\neq \emptyset$. Since $G$ is $2$-degenerate,
$G[V']$ is also $2$-degenerate. In particular, $G[V']$ has a vertex $v$ of degree at most $2$. This $v$ satisfies the lemma.
\end{proof}

\section{$2$-degenerate graphs}
In this section we prove the following stronger result, which implies Theorem~\ref{2deg}.

\begin{theorem}\label{lem: 2-degenerate}
Let  a $2$-degenerate graph $G$ and a positive integer $D\geq 2$ satisfy the following:
\begin{itemize}
\item[(A1)] $\Delta(G)\leq D +2$.
\item[(A2)] For $t\in [2]$, every vertex $v\in V(G)$ with $d_{G}(v)= D+t$ is adjacent to at least $t$ vertices of degree one.
\end{itemize}
Then $\chi'_s(G)\leq 5D+1$.
\end{theorem}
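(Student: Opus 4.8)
The plan is to argue by contradiction: let $G$ be a counterexample for which $|V(G)|+|E(G)|$ is minimum. Such a $G$ is connected, since $2$-degeneracy, (A1) and (A2) are inherited by connected components and $\chi'_s(G)$ is the maximum of $\chi'_s$ over the components of $G$. If $\Delta(G)\le 2$ then $G$ is a disjoint union of paths and cycles, so $\chi'_s(G)\le 5\le 5D+1$; hence $\Delta(G)\ge 3$. I next rule out a degree-one vertex $x$ adjacent to a $2^-$-vertex $y$: since $\Delta(G)\ge 3$ and $G$ is connected we have $d_G(y)=2$, and $G-x$ is a smaller graph still satisfying (A1) and (A2) (the only changed degree is $d(y)$, now $1$), so $\chi'_s(G-x)\le 5D+1$ by minimality; since $N_G(x)=\{y\}$ we have $N_G^2[xy]=\{xy\}\cup\Gamma_G(y')$, where $y'$ is the other neighbor of $y$, whence $|N_G^2[xy]|\le D+3\le 5D+1$ and the coloring extends to $xy$ — a contradiction. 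So every degree-one vertex of $G$ has a $3^+$-neighbor.

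Now apply Lemma~\ref{lem: 2 degen} to obtain a vertex $v$ with $d:=d_G(v)\ge 3$ having at most $h\le 2$ neighbors of degree at least three. Suppose first that $v$ has a degree-one neighbor $x$, and let $G'=G-x$; this is $2$-degenerate and satisfies (A1) and (A2) — the only changed degree is $d(v)$, which drops by one, and if it drops from $D+2$ to $D+1$ then (A2) for $G$ guarantees $v$ has two degree-one neighbors, one of which survives in $G'$ — so $\chi'_s(G')\le 5D+1$. Since $N_G(x)=\{v\}$, $N_G^2[xv]=\Gamma_G(v)\cup\bigcup_{a\in N_G(v)}\Gamma_G(a)$; a degree-one neighbor of $v$ contributes no new edge, a degree-two neighbor at most one, and each of the $h$ high neighbors at most $D+1$, so, writing $p\ge 1$ for the number of degree-one neighbors of $v$, we get $|N_G^2[xv]|\le 2d-p+hD$. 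Using $p\ge 1$ always and $p\ge 2$ when $d=D+2$ (by (A2)), together with $d\le D+2$ and $h\le 2$, one checks $2d-p\le 2D+2$, hence $|N_G^2[xv]|\le 4D+2\le 5D+1$ and $xv$ can be colored — a contradiction.

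Otherwise $v$ has no degree-one neighbor, so (A2) forces $d\le D$, and $v$ has $d-h\ge 1$ neighbors $x_1,\dots,x_{d-h}$ of degree exactly two, with $N_G(x_i)=\{v,z_i\}$. Let $G'=G-\{vx_1,\dots,vx_{d-h}\}$; this is $2$-degenerate, has fewer edges, and satisfies (A1) and (A2) (the only changed degrees are $d_{G'}(v)=h\le 2$ and $d_{G'}(x_i)=1$, and making $x_i$ a degree-one neighbor of $z_i$ only helps (A2)). By minimality $\chi'_s(G')\le 5D+1$, so the resulting coloring $f$ is a partial coloring of $G$ whose only uncolored edges are $vx_1,\dots,vx_{d-h}$. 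For each $i$, since $N_G(x_i)=\{v,z_i\}$ we have $N_G^2[vx_i]=\Gamma_G(v)\cup\bigcup_{a\in N_G(v)}\Gamma_G(a)\cup\Gamma_G(z_i)$, and using $|\Gamma_G(v)|=d$, $\sum_{a\in N_G(v)}|\Gamma_G(a)\setminus\Gamma_G(v)|\le h(D+1)+(d-h)$, and $|\Gamma_G(z_i)|-1\le D+1$, we obtain $|N_G^2[vx_i]|\le 2d+hD+D+1\le 5D+1$. Hence $(\{vx_1\},\dots,\{vx_{d-h}\})$ is an $(f,5D+1)$-degenerate sequence for $G$: when $vx_i$ is to be colored, at least one edge of $N_G^2[vx_i]$ — namely $vx_i$ itself — is still uncolored, so at most $5D$ colors are forbidden. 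By Lemma~\ref{lem: degenerate}, $\chi'_s(G)\le 5D+1$, the final contradiction.

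The one genuine obstacle is that an edge incident to a vertex of degree close to $D+2$ may have on the order of $D^2$ edges within distance two and so can never be recolored by a greedy step; the whole argument is arranged so that every edge left uncolored after a reduction is incident to $v$, which (by Lemma~\ref{lem: 2 degen}) has at most two $3^+$-neighbors, so all its incident edges have second neighborhoods of size $O(D)$. Condition (A2) is used for the dual purpose of surviving every reduction and of forcing $d_G(v)\le D$ in the last case, where $v$ might otherwise have large degree — which is exactly where the estimate $|N_G^2[vx_i]|\le 5D+1$ would otherwise break down.
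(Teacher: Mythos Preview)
Your proof is correct and in fact cleaner than the paper's. Both arguments hinge on Lemma~\ref{lem: 2 degen} to locate a $3^+$-vertex $v$ with at most two $3^+$-neighbours, but the reductions differ.

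The paper first strips off all degree-$1$ vertices to form $G^*$, applies Lemma~\ref{lem: 2 degen} to $G^*$, and then performs a one-edge reduction: it deletes $vu_1$ (where $u_1$ is a single $2$-neighbour of $v$), \emph{adds} up to two new pendant leaves at the far neighbour $w_1$ so that $w_1$ has three interchangeable pendant edges, colours the auxiliary graph $H$ by minimality, and then uses a colour-switching trick on those pendants to force $f(w_1u_1)\notin\{f(vv_1),f(vv_2)\}$. A residual conflict with some $f(vu_i)$ is handled by a second pass (Case~2 there). The minimality is taken with respect to the number of $2^+$-vertices, since the auxiliary $H$ may have more edges than $G$.

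You instead apply Lemma~\ref{lem: 2 degen} to $G$ itself and bifurcate on whether $v$ has a leaf neighbour. When it does, you peel off that leaf; when it does not (so (A2) forces $d_G(v)\le D$), you delete \emph{all} the edges $vx_1,\dots,vx_{d-h}$ at once. Because your reduced graph is always a genuine subgraph of $G$, the induced colouring is automatically a partial colouring of $G$ with no conflicts to repair, and the uncoloured edges $vx_i$ are then handled by a single degenerate-sequence estimate $|N_G^2[vx_i]|\le 2d+hD+D+1\le 5D+1$. This sidesteps the auxiliary-leaf construction and the colour-switching entirely, and lets you use the simpler minimality on $|V|+|E|$.

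Two minor remarks. First, your preliminary step ruling out a $1$-vertex adjacent to a $2^-$-vertex is correct but never invoked afterwards; the two main cases go through without it. Second, in Case~2 you could note that each later $vx_j$ lies in $N_G^2[vx_i]$, so the degenerate-sequence bound even has slack; but as you observe, $|N_G^2[vx_i]|\le 5D+1$ alone already suffices. The paper's more elaborate reduction (pendant leaves plus switching) is not needed here, though it foreshadows the technique used in the harder Theorems~\ref{8:3} and~\ref{mad3}.
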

\begin{proof} 
Let $G$ be a counterexample to the statement with the fewest $2^+$-vertices, and subject to this,  with the fewest edges.
Let $G^*$ be obtained from $G$ by deleting all vertices of degree $1$ in $G$. By (A1) and (A2), $\Delta(G^*)\leq D$.

First we prove that 
\begin{equation}\label{eq:uv}
\text{if $v\in V(G^*)$ and $d_{G^*}(v)\leq 2$,  then $d_{G^*}(v)=d_G(v)$.}
\end{equation}
Indeed, suppose  to the contrary that $N_{G^*}(v) = \{w_1,\dots, w_{t}\}$ where $t\in [2]$ and there is $u\in N_G(v) \setminus N_{G^*}(v)$ with $d_G(u) = 1$.
The graph $G':= G- u$  is  $2$-degenerate and has  no more $2^+_{G'}$-vertices  than $G$ has $2^+_{G}$-vertices. $G'$ also satisfies (A1) and (A2).
Furthermore, $G'$ has   strictly fewer edges than  $G$. 
By the minimality of $G$, the graph $G'$ has a $(5D+1)$-coloring $f$. Since
$$|N^2_G[uv]|\leq \left|\Gamma_{G}(v)\cup\bigcup_{i=1}^{t}\Gamma_{G}(w_i)\right| \leq D+2+D+2+ D+2-2 \leq 5D+1,$$
when $D\geq 2$, we can extend $f$ to $uv$, a contradiction. This proves~\eqref{eq:uv}. In particular,~\eqref{eq:uv} yields
\begin{equation}\label{j26}
\delta(G^*)\geq 2.
\end{equation}

If $\Delta(G^*)\leq 2$, then by~\eqref{j26},  $G^*$ is a disjoint union of cycles. Then by~\eqref{eq:uv}, $G$ itself is a disjoint union of cycles.
So by the minimality of $G$, it is a cycle. Since each edge of a cycle has at most four edges at distance at most $2$, it is
strong $5$-edge-colorable.
This   contradicts the choice of $G$ since $5\leq 5D+1$. Thus $\Delta(G^*)>2$. Then by Lemma~\ref{lem: 2 degen},
 $G^*$ has a vertex $v$ with $d_{G^*}(v)\geq 3$  that  is adjacent to at most two $3^+_{G^*}$-vertices.  We fix such a vertex to be $v$ and let $N_{G^*}(v) =\{ v_1, v_2, u_1,\dots, u_t\}$ where $d_{G^*}(u_i)=2$ for all $i\in [t]$. (It could be that $d_{G^*}(v_1)= 2$ and/or $d_{G^*}(v_2)= 2$.)

By the choice of $v$, 
we know $t\geq 1$.  By~\eqref{eq:uv} and~\eqref{j26}, $d_{G^*}(u_i)=d_{G}(u_i)=2$ for each $i\in [t]$. For each $i\in[t]$, let $N_{G}(u_i) = \{v, w_i\}$.
By (A1) and (A2), we have
\begin{align}\label{eq: t+s and t}
 t+2\leq D. 
 \end{align}
 We claim that 
 \begin{align}\label{eq: v deg same}
\text{$d_{G^*}(v) = d_{G}(v)$.}
\end{align}
 Indeed, suppose to the contrary that $u\in N_G(v) \setminus N_{G^*}(v)$ with $d_G(u) = 1$.
The graph $G'':= G- u$  is  $2$-degenerate and has  no more $2^+_{G''}$-vertices  than $G$ has $2^+_{G}$-vertices. $G''$ also satisfies (A1) and (A2).
Furthermore, $G''$ has   strictly fewer edges than  $G$. 
By the minimality of $G$, the graph $G''$ has a $(5D+1)$-coloring $f$. Since
$$|N^2_G[uv]|\leq d_{G}(v) + \sum_{i=1}^{2} (d_{G}(v_i)-1) + \sum_{i=1}^{t} (d_{G}(u_i)-1) \stackrel{\eqref{eq:uv}}{\leq} (D+2)+2(D+1) + t \stackrel{\eqref{eq: t+s and t}}{\leq}  5D+1,$$
when $D\geq 1$, we can extend $f$ to $uv$, a contradiction. This proves~\eqref{eq: v deg same}.

Suppose $w_1$ has exactly $h$ neighbors of degree $1$ in $G$. Let $H$ be the graph obtained from $G-vu_1$ by 
adding $\ell:=\max\{0,2-h\}$ new vertices $x_1,\ldots,x_\ell$, each of which is adjacent only to $w_1$.
Since the degree of $u_1$ in $G$ is $2$ and  in $H$ is $1$,  $H$ has fewer $2^+$-vertices than $G$. Also
by construction,
\begin{equation}\label{j28}
\mbox{
$w_1$ has at least $3$ neighbors of degree $1$ in $H$, say $u_1,u'_1$, and $u''_1$.}
\end{equation}

It is not hard to check that $H$ inherits  properties (A1) and (A2) from $G$.
Note that $H$ has fewer $2^+$-vertices than $G$. 
So, by the minimality of $G$, the $2$-degenerate graph $H$ has
 a $(5D+1)$-coloring $f$. By~\eqref{j28}, we can switch the colors of $w_1u_1,w_1u'_1$, and $w_1u''_1$ so that
 \begin{equation}\label{j31}
 f(w_1u_1)\notin \{f(vv_1),f(vv_2)\}.
\end{equation}

\noindent {\bf\em Case 1.} $f(w_1u_1)\notin \{f(vu_2),\ldots,f(vu_t)\}$. Together with~\eqref{eq: v deg same} and \eqref{j31}, this
yields that $f\vert_{E(G)}$ 
is a partial coloring of $G$ where only $vu_1$ is not colored.
Since
\begin{align}\label{j312}
|N^2_{G}[vu_1] | &\leq d_{G}(v_1)+ d_{G}(v_2)+ (d_{G}(w_1)-1) + \sum_{i=1}^{t} d_{G}(u_i) \\ \nonumber
& \leq  (D+2) + (D+2) +  (D+1)+2t \stackrel{\eqref{eq: t+s and t}}{\leq} 3D+5 + 2D-4 \leq 5D+1,
\end{align}
we can extend $f$ to $vu_1$.
\newline

\noindent {\bf\em Case 2.} There is $i\in [t]\setminus\{1\}$ such that $f(w_1u_1)=f(vu_i)$. Then by~\eqref{eq: v deg same} and \eqref{j312},
we can choose $f(vu_1)$ so that the only conflict in $f$ will be that $f(w_1u_1)=f(vu_i)$. Let $f'$ be obtained from $f$ by uncoloring
$vu_i$. Then we have Case 1 with $f'$ in place of $f$ and $u_i$ in place of $u_1$. 
This proves the theorem.
\end{proof}

\section{Graphs with maximum average degree less than $8/3$}

In this section we prove Theorem \ref{8:3}: 

{\em If $\Delta\geq 9$ and $G$ is a graph with $\Mad(G)<8/3$ and $\Delta(G)\leq \Delta$, then $\chi'_s(G)\leq 3\Delta(G)-3$. }

\bigskip Similarly to the proof of Theorem \ref{lem: 2-degenerate},
consider a counterexample $G$  with the fewest $2^+$-vertices, and subject to this with the fewest edges.
Let $G^*$ be the graph  obtained from $G$ by deleting all vertices of degree $1$. 
Let $\Delta=\Delta(G)$.

\begin{claim}\label{cl: 8/3 min deg 2}
$\delta(G^*)\geq 2$. 
\end{claim}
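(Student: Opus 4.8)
The plan is to mimic exactly the argument used for equation~\eqref{eq:uv} in the proof of Theorem~\ref{lem: 2-degenerate}: show that no vertex of $G^*$ having degree at most one in $G^*$ can exist, because such a vertex would let us delete a pendant edge of $G$ and extend a coloring. Since $G^*$ is obtained by removing all degree-one vertices of $G$, a vertex $v\in V(G^*)$ with $d_{G^*}(v)\le 1$ means that $v$ has at most one $2^+_G$-neighbor, while all its other neighbors in $G$ have degree one. First I would suppose for contradiction that some $v\in V(G^*)$ has $d_{G^*}(v)\le 1$, and pick a neighbor $u$ of $v$ in $G$ with $d_G(u)=1$ (such a $u$ exists: if $v$ had no degree-one neighbor then $d_G(v)=d_{G^*}(v)\le 1$, but $v\in V(G^*)$ forces $d_G(v)\ge 2$, a contradiction — or more simply, $v\in V(G^*)$ with small $G^*$-degree but $G$-degree $\ge 2$ must have a pendant neighbor).

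Next I would set $G':=G-u$. I must check $G'$ is still a legitimate smaller counterexample candidate: it is a subgraph of $G$ so $\Mad(G')\le\Mad(G)<8/3$ and $\Delta(G')\le\Delta$; and deleting $u$ (a degree-one vertex) either leaves the number of $2^+$-vertices unchanged or decreases it (it decreases it precisely when $d_G(v)=2$, i.e. when $v$ becomes a degree-one vertex of $G'$), and in any case $G'$ has strictly fewer edges. So by the minimality of $G$, $G'$ admits a strong $(3\Delta-3)$-coloring $f$. It remains to extend $f$ to the edge $uv$. The forbidden colors for $uv$ are those on edges in $N_G^2[uv]\setminus\{uv\}$. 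Writing $N_{G^*}(v)=\{w\}$ if $d_{G^*}(v)=1$ (the case $d_{G^*}(v)=0$ is even easier, and in fact cannot occur once we also know $\Delta(G)\ge 9$ forces $G^*$ nonempty, but it is handled the same way), every edge of $G$ at distance at most two from $uv$ lies in $\Gamma_G(v)\cup\Gamma_G(w)$, so
\[
|N_G^2[uv]\setminus\{uv\}|\ \le\ \bigl(d_G(v)-1\bigr)+\bigl(d_G(w)-1\bigr)+1\ \le\ (\Delta-1)+(\Delta-1)+1\ =\ 2\Delta-1.
\]
Hmm — $2\Delta-1$ need not be below $3\Delta-3$ only for $\Delta\ge 2$, which is fine here since $\Delta\ge 9$; so there is a free color and $f$ extends, contradicting that $G$ is a counterexample. (One should be slightly more careful counting $\Gamma_G(v)\cup\Gamma_G(w)$ when $vw$ is counted once; the bound $d_G(v)+d_G(w)-1\le 2\Delta-1$ absorbs this, and since $2\Delta-1\le 3\Delta-3$ whenever $\Delta\ge 2$, we are done.) Hence $d_{G^*}(v)\ge 2$ for every $v\in V(G^*)$, i.e. $\delta(G^*)\ge 2$.

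The main obstacle, such as it is, is purely bookkeeping: making sure the deletion of $u$ genuinely does not increase the count of $2^+$-vertices (it does not, since $u$ itself is a $1$-vertex and removing it can only lower degrees of its neighbors), and that the degree bound for $N_G^2[uv]$ is written correctly so that the crude estimate $2\Delta-1\le 3\Delta-3$ (equivalently $\Delta\ge 2$) suffices — there is a lot of slack here because $v$ has only one $2^+$-neighbor. No delicate recoloring or discharging is needed for this particular claim; it is the base-case analogue of~\eqref{eq:uv}, and the hypothesis $\Mad(G)<8/3$ is not even used beyond passing to the subgraph $G'$.
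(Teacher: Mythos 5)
Your proposal is correct and follows essentially the same route as the paper: take the unique $G^*$-neighbor $w$ of the offending vertex, delete one of its pendant $G$-neighbors, invoke minimality, and extend the coloring to the deleted edge using the count $|N^2_G[uv]|\le d_G(w)+(\Delta-1)\le 2\Delta-1\le 3\Delta-3$. The bookkeeping points you flag (the $2^+$-vertex count not increasing, the existence of a pendant neighbor) are exactly the ones the paper relies on implicitly, so there is no gap.
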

\begin{proof}
Suppose $u$ is a $1_{G^*}$-vertex where $w$ is the neighbor of $u$ in $G^*$.
Then there exists $v \in N_{G}(u)-w$ with $d_{G}(v)=1$. By the minimality of $G$, the graph $G- v$
has a $(3\Delta-3)$-coloring $f$. Then $f$ is a partial $(3\Delta-3)$-coloring of $G$, and since
\begin{equation}\label{ea1}
|N^2_{G}[uv]| = \sum_{x\in N_{G}(u)} d_{G}(x) \leq d_{G}(w) +  |N_{G}(u)\setminus N_{G*}(u)|\leq  \Delta + (\Delta-1)\leq 3\Delta-3,
\end{equation}
we can extend $f$ to $uv$, a contradiction.
\end{proof}

Say that a vertex $v$ is {\em special} if $d_{G^*}(v)=2$ and $v$ is adjacent to a $(\Delta-1)^+_{G^*}$-vertex.

\begin{claim}\label{cl: special 1}
If $u_1$ and $u_2$ are two adjacent $2_{G^*}$-vertices, then both $u_1$ and $u_2$ are special.
\end{claim}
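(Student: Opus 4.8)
The plan is to argue by contradiction using the minimality of $G$. Suppose $u_1$ and $u_2$ are adjacent $2_{G^*}$-vertices and, say, $u_1$ is not special; by symmetry we may then try to delete an edge incident to the chain they form. By Claim~\ref{cl: 8/3 min deg 2} we know $\delta(G^*)\geq 2$, so $d_G(u_i)\geq d_{G^*}(u_i)=2$; I would first want to record the structure around the pair: write $N_{G^*}(u_1)=\{u_2,w_1\}$ and $N_{G^*}(u_2)=\{u_1,w_2\}$, where $w_1$ is the other (non-$u_2$) neighbor of $u_1$ in $G^*$ and $w_2$ the other neighbor of $u_2$. Since $u_1$ is not special, $d_{G^*}(w_1)\leq \Delta-2$.

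The main step is to delete the edge $u_1u_2$ (or pass to $G-u_1u_2$, possibly also cleaning up resulting degree-$1$ vertices) and invoke minimality to get a $(3\Delta-3)$-coloring $f$ of the smaller graph; then I must extend $f$ to $u_1u_2$. The number of edges within distance $2$ of $u_1u_2$ that need distinct colors is at most
\[
|N^2_G[u_1u_2]| \leq \bigl(d_G(w_1)-1\bigr) + \bigl(d_G(w_2)-1\bigr) + d_G(u_1) + d_G(u_2) - (\text{overcounts}),
\]
and here is where the non-specialness is used: the contribution of $w_1$ is bounded by $d_{G^*}(w_1)+(\text{pendant edges at }w_1)\leq \Delta-2$ plus pendants, while $w_2$ contributes at most $\Delta-1$, giving roughly $(\Delta-2)+(\Delta-1)+2+2$ and a bit of bookkeeping should bring this below $3\Delta-3$ for $\Delta\geq 9$. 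The delicate point is controlling the pendant (degree-$1$) neighbors of $w_1$ and $w_2$: if $w_1$ or $w_2$ has many degree-$1$ neighbors, the naive count on $d_G$ could be as large as $\Delta$, so I expect the real argument deletes those pendant edges too (reducing the $2^+$-vertex count or edge count) before recoloring, exactly as in the proof of Theorem~\ref{lem: 2-degenerate}, or uses a color-switching trick at $w_1$ to free up a color for $u_1u_2$.

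The main obstacle will be this interaction with pendant vertices at $w_1$ and $w_2$, and in particular handling the case where $w_2$ is itself a high-degree vertex (so $u_2$ could be special even if $u_1$ is not). I anticipate splitting into cases according to which of $u_1,u_2$ is special and whether $w_1=w_2$, and in the worst case performing a switch of the colors on pendant edges at $w_1$ — as was done around~\eqref{j28}--\eqref{j31} in the previous section — so that the color of $w_1u_1$ avoids the colors already forced near $u_1u_2$, after which the extension to $u_1u_2$ goes through by the same counting as in~\eqref{ea1}. Once the edge is extended we contradict the choice of $G$ as a counterexample, completing the proof that both $u_1$ and $u_2$ must be special.
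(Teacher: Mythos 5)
There is a genuine gap. Your overall framework (minimality plus a local recoloring around $u_1u_2$) is the right one, and your reduction graph is in the right spirit, but two things go wrong. First, your count for extending to $u_1u_2$ silently assumes $d_G(u_1)=d_G(u_2)=2$ (the ``$+2+2$'' terms); nothing proved in this section rules out pendant ($1_G$-) neighbors of $u_1$ and $u_2$, so if you only delete $u_1u_2$ the honest bound is $d_G(w_1)+d_G(w_2)+(d_G(u_1)-2)+(d_G(u_2)-2)$, which can reach $4\Delta-4$. The cure is indeed to delete all $1_G$-neighbors of $u_1$ and $u_2$ as well (and to pad $w_1,w_2$ up to degree $\Delta$ with new leaves so the reduced graph still has fewer $2^+$-vertices and so that $f(\Gamma(w_1))$, $f(\Gamma(w_2))$ each contain $\Delta$ colors); after that, extending to $u_1u_2$ is the \emph{easy} step, needing only $2\Delta+1\le 3\Delta-3$. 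Second, and more seriously, you never address how to recolor the deleted pendant edges at $u_1$ at the end. For such an edge $u_1z$ one has $|N^2_G[u_1z]\setminus\{u_1z\}|\le d_G(w_1)+d_G(u_2)+d_G(u_1)-3\le 3\Delta-3$, i.e.\ with a naive count \emph{every} color can be forbidden. The paper's proof closes this one-color deficit with a planted repetition: it chooses a special color $\alpha$ that lies in $f(\Gamma(w_1))$ and is either $f(w_2u_2)$ or is placed on a pendant edge of $u_2$, so that $N^2_G[u_1z]$ is guaranteed to contain two edges of color $\alpha$ whenever $d_G(u_2)\ge 3$. The non-specialness of $u_1$ (i.e.\ $d_{G^*}(w_1)\le\Delta-2$) is used precisely to guarantee three pendant edges at $w_1$ in the reduced graph whose colors can be permuted to keep $f(u_1w_1)\ne\alpha$, not, as in your sketch, to shave the degree contribution of $w_1$ (which pendants at $w_1$ would defeat anyway, as you note). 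Without this repeated-color device, or an equivalent saving of one color in $N^2_G[u_1z]$, the final recoloring step of your plan fails.
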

\begin{proof}
Let $N_{G^*}(u_1)= \{w_1,u_2\}$ and $N_{G^*}(u_2)=\{u_1,w_2\}$. 
Assume $u_1$ is not special, so that $d_{G^*}(w_1)\leq \Delta-2$.

Obtain the graph $H$ from $G$ by first deleting all  $1_G$-neighbors of $u_1$ and $u_2$, then deleting the edge $u_1u_2$, and then adding leaves  adjacent to $w_1$ and $w_2$ so that $d_{H}(w_i)=\Delta$ for $i\in [2]$. 
By construction, $H$ has fewer vertices of degree at least $2$ than $G$. Also, either $\Mad(G)<2$ and hence $\Mad(H)<2$ or $\Mad(H)\leq \Mad(G)$ and hence
$\Mad(H)< 8/3$. So  $H$ has a $(3\Delta-3)$-coloring $f$ by the minimality of $G$. 
Also $w_1$ is incident to at least three pendant edges in $H$, one of which is $u_1w_1$. 
Let $w_1v_1$ and $w_1v_2$ be two other such edges.

By the definition of $H$,
 $|f(\Gamma_{H}(w_1))|= |f(\Gamma_{H}(w_2))| = \Delta$. We define a special color $\alpha$ as follows. If
$f(w_2u_2)\in f(\Gamma_{H}(w_1))$, then let $\alpha:=f(w_2u_2)$. Otherwise,
$f(\Gamma_{H}(w_1))\setminus f(\Gamma_{H}(w_{2}))\neq \emptyset$, and we let $\alpha$ be any color in this difference.
  After this, we switch the colors of $u_1w_1$ and $w_1v_1$, if necessary,  so that $f(u_1w_1) \neq\alpha.$ 
In particular, this means  $f(u_1w_1)\neq f(u_2w_2)$, $f\vert_{E(G)}$ is a partial coloring of $G$. 
  
 We  extend $f$ to $u_1u_2$ by coloring it with a color not in $f(\Gamma_{H}(w_1)\cup \Gamma_{H}(w_2))$, which is possible since $2\Delta+1\leq 3\Delta-3$. Now we will color the pendant edges of $G$ incident with $u_2$, if there are any. In particular, if there is
at least one such edge $u_2u'_2$ and $\alpha\neq f(w_2u_2)$, then we start by letting $f(u_2u'_2)=\alpha$. 
After coloring all pendant edges incident to $u_2$, we have 
\begin{equation}\label{0830}
 \mbox{\em $d_G(u_2)=2$ or $N^2_{G}[u_1z]$ has two edges of color $\alpha$ for each $1_G$-neighbor $z$ of $u_1$.}
\end{equation}
Then we color the remaining edges in $\Gamma_G(u_2)$
one by one, since the only colored edges that could be in conflict are in $\Gamma_G(w_2)\cup \Gamma_G(u_2)\cup \{w_1u_1\}$,
and there are at most $2\Delta+1$ such edges.
Finally, we remove the edges in $E(H)-E(G)$ and  color the pendant edges of $G$ incident with $u_1$ one by one.
For every  pendant edge $u_1z\in E(G)$ incident with $u_1$, at the moment of coloring $u_1z$,
 the number $M(u_1z)$   of the   colors forbidden for $u_1z$ is at most $|N^2_{G}[u_1z]\setminus \{u_1z\}|$. Moreover,
by~\eqref{0830}, if $d_G(u_2)\geq 3$, then  $M(u_1z)\leq |N^2_{G}[u_1z]\setminus\{u_1z\}|-1$. In any case,
$$M(u_1z)\leq d_{G}(w_1)+\max\{2, d_{G}(u_2)-1\}+ (d_{G}(u_1)-3)\leq \Delta+\max\{2,\Delta-1\}+(\Delta-3)=3\Delta-4.$$
So we can always  find a free color for $u_1z$.
This yields a $(3\Delta-3)$-coloring of $G$, a contradiction.
This shows that $u_1$ is special, and by symmetry, this also shows that $u_2$ is special.
\end{proof}

\begin{claim}\label{cl: special 2}
If $u$ is a $3_{G^*}$-vertex with two  $2_{G^*}$-neighbors $v_1$ and $v_2$, then at least one of $v_1,v_2 $ is special.
\end{claim}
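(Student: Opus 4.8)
Assume for contradiction that neither $v_1$ nor $v_2$ is special, and write $N_{G^*}(u)=\{v_1,v_2,w\}$ and $N_{G^*}(v_i)=\{u,w_i\}$ for $i\in[2]$. Since $v_i$ is a non-special $2_{G^*}$-vertex, $d_{G^*}(w_i)\le\Delta-2$; and $w_i$ cannot be a $2_{G^*}$-vertex, for otherwise Claim~\ref{cl: special 1} applied to the edge $v_iw_i$ would make $v_i$ special. Hence $3\le d_{G^*}(w_i)\le\Delta-2$, and in particular none of $w,w_1,w_2$ equals $u$, $v_1$, or $v_2$. A coincidence among $w,w_1,w_2$ only shrinks the relevant neighbourhoods (or forces extra overlaps in the counts below), so I plan to argue in the generic position and note that the degenerate cases are no harder.

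The plan is to mimic the proof of Claim~\ref{cl: special 1}. Let $P_u,P_1,P_2$ be the sets of pendant edges of $G$ at $u,v_1,v_2$, and let $H$ be obtained from $G$ by deleting $uv_1,uv_2$ together with all edges of $P_u\cup P_1\cup P_2$ (and the resulting isolated vertices). Then $u,v_1,v_2$ all have degree at most $1$ in $H$, so $H$ has at least three fewer $2^+$-vertices than $G$, while $\Delta(H)\le\Delta$ and $\Mad(H)\le\Mad(G)<8/3$; by the minimality of $G$, $H$ has a $(3\Delta-3)$-coloring $f$. I would extend $f$ to $E(G)$ in the order $uv_1$, $uv_2$, the edges of $P_u$, the edges of $P_1$, the edges of $P_2$. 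The first three groups are coloured greedily: when $uv_1$ is reached, $N_G^2[uv_1]$ meets $\Gamma_G(v_1)\cup\Gamma_G(v_2)$ only in $\{v_1w_1,v_2w_2\}$, so at most $d_G(w)+d_G(w_1)+O(1)\le 2\Delta+O(1)<3\Delta-3$ colours are forbidden; and when an edge $uz$ of $P_u$ is reached, $P_1$ and $P_2$ are still uncoloured, so $N_G^2[uz]$ meets $\Gamma_G(v_1)\cup\Gamma_G(v_2)$ in only $O(1)$ edges and again at most $2\Delta+O(1)$ colours are forbidden.

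The tight edges are those of $P_1$ (and, symmetrically, those of $P_2$). For $z\in P_1$ one has $N_G^2[v_1z]=\Gamma_G(v_1)\cup\Gamma_G(u)\cup\Gamma_G(w_1)$, and once $P_u$ has been coloured all of $\Gamma_G(u)$ is coloured, so the number of forbidden colours is at most $d_G(v_1)+d_G(u)+d_G(w_1)-3$, which is $\le 3\Delta-4$ unless $d_G(v_1)=d_G(u)=d_G(w_1)=\Delta$. In that remaining case $d_G(w_1)=\Delta$, so $w_1$ retains all its edges in $H$, $|f(\Gamma_H(w_1))|=\Delta$, and $w_1$ has at least three pendant edges in $H$ (namely $v_1w_1$ plus at least two genuine pendants of $w_1$, using $d_{G^*}(w_1)\le\Delta-2$). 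I would then introduce a special colour $\alpha_1$, exactly as the colour $\alpha$ in Claim~\ref{cl: special 1} but with the pair $(w_1,w)$ in place of $(w_1,w_2)$ and the edge $uw$ in place of $w_2u_2$: take $\alpha_1:=f(uw)$ if $f(uw)\in f(\Gamma_H(w_1))$ (so $\alpha_1$ already sits on $uw\in\Gamma_G(u)$), and otherwise pick $\alpha_1\in f(\Gamma_H(w_1))\setminus f(\Gamma_H(w))$ — this is non-empty because $|f(\Gamma_H(w_1))|=\Delta\ge d_H(w)=|f(\Gamma_H(w))|$ and the two palettes differ — and place $\alpha_1$ on one of the edges of $P_u$, which is legitimate since $\alpha_1$ then avoids the palette of $w$. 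After one switch among the pendant edges of $w_1$ forcing $f(v_1w_1)\ne\alpha_1$ while keeping $\alpha_1$ on a genuine edge of $\Gamma_G(w_1)$, the colour $\alpha_1$ occurs both on an edge of $\Gamma_G(w_1)$ and on an edge incident to $u$; since $u\not\sim w_1$ these two edges are not in conflict and both lie in $N_G^2[v_1z]$ for every $z\in P_1$, so $m(f,v_1z)\ge 1$ and at most $3\Delta-4$ colours are forbidden for $v_1z$. Doing the same for $P_2$ with a colour $\alpha_2$ built from $(w_2,w)$, and placing $\alpha_1,\alpha_2$ on distinct edges at $u$ (possible as $|P_u|=\Delta-3\ge 2$), would finish the extension and give a $(3\Delta-3)$-coloring of $G$, contradicting the choice of $G$.

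The hard part will be the bookkeeping around $\alpha_1$ and $\alpha_2$. One must order the steps so that $\alpha_1,\alpha_2$ are fixed before $uv_1,uv_2$ are coloured (so their values can be avoided there); one must check that fixing $\alpha_1$ and the switch at $w_1$ do not invalidate the corresponding statements for $\alpha_2$ — which they should not, since each of the $w_1$- and $w_2$-switches only permutes colours among pendant edges of one vertex while the definitions of $\alpha_1,\alpha_2$ rest on the stable data $f(uw)$, $f(\Gamma_H(w_1))$, $f(\Gamma_H(w_2))$, $f(\Gamma_H(w))$ — and, most delicately, one must handle the stubborn sub-case in which the dichotomy forces $\alpha_1$ to equal $f(v_2w_2)$ (equivalently $f(\Gamma_H(w_1))\setminus f(\Gamma_H(w))=\{f(v_2w_2)\}$), in which $\alpha_1$ cannot be placed on any edge at $u$; there the near-equality of the palettes of $w_1$ and $w$ carries enough extra structure to extract the needed repeat by a further case split, and it is this case split that I expect to be the real technical burden of the proof.
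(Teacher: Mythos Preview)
Your plan follows the paper's proof closely, but the paper avoids your ``stubborn sub-case'' altogether by one device you omit: when constructing $H$, the paper not only deletes $uv_1,uv_2$ and the pendants at $u,v_1,v_2$, it also \emph{adds fresh leaves to each of $w,w_1,w_2$} until all three have degree exactly $\Delta$ in $H$. Since $d_{G^*}(w_i)\le\Delta-2$, this guarantees at least three pendant edges at each $w_i$ in $H$ (namely $v_iw_i$ plus at least two more), independently of $d_G(w_i)$. With three pendants in hand, the paper defines $\alpha_1,\alpha_2$ once and for all (not only in a hard sub-case) and then switches at \emph{both} $w_1$ and $w_2$ to force $f(v_iw_i)\notin\{\alpha_1,\alpha_2,f(uw)\}$ for $i=1,2$ simultaneously; a short check shows at most two of those three colours lie in each palette $f(\Gamma_H(w_i))$, so three pendants suffice. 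After that, $\alpha_1$ can always be placed on a pendant at $u$, since $\alpha_1\notin f(\Gamma_H(w))\supseteq f(\Gamma_G(w))$ and $\alpha_1\ne f(v_1w_1),f(v_2w_2)$ by the switches.

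Without the extra leaves your switch at $w_2$ may simply be unavailable: if $d_G(w_2)=d_{G^*}(w_2)$ then $w_2$ has only the single pendant $v_2w_2$ in your $H$, and $f(v_2w_2)$ is immovable. In the configuration where $d_G(w)=d_G(w_1)=d_G(u)=d_G(v_1)=\Delta$, $f(uw)\notin f(\Gamma_H(w_1))$, and $f(\Gamma_H(w_1))\setminus f(\Gamma_H(w))=\{f(v_2w_2)\}$, every colour of $f(\Gamma_G(w_1))$ already lies in the forbidden set for a pendant at $u$ (each such colour is either in $f(\Gamma_G(w))$ or equals $f(v_2w_2)$), so no colour can be made to repeat between $\Gamma_G(u)$ and $\Gamma_G(w_1)$, and the greedy extension to $P_1$ genuinely fails at its last edge. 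The ``near-equality of palettes'' you invoke is exactly what \emph{blocks} the repeat here; it is not exploitable structure. The clean fix is not a further case split but the leaf-padding trick, which makes the $\alpha$-colours and both switches uniformly available.
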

\begin{proof}
Let $N_{G^*}(v_i) = \{u, v'_i\}$ for $i\in[2]$ and let $N_{G^*}(u)= \{w,v_1,v_2\}$. 
Suppose to the contrary that both $v_1, v_2$ are not special, so $v'_1, v'_2$ are both $(\Delta-2)^-_{G^*}$-vertices.
We construct a graph $H$ from $G\setminus \{uv_1,uv_2\}$ by deleting all $1_G$-neighbors of $u,v_1$, and $v_2$ and then adding leaves to $v'_1,v'_2$, and $w$ to make the degrees of $v'_1,v'_2$, and $w$ equal to $\Delta$. Since $\Mad(G)<8/3$, and we were adding only $1$-vertices, $\Mad(H)<8/3$.
Since $u,v_1$, and $v_2$ are leaves in $H$ but not in $G$, the graph $H$ has  fewer $2^+$-vertices than $G$.
 Thus by the minimality of $G$, the graph $H$ has a $(3\Delta-3)$-coloring $f$.

For $i\in [2]$, since $v'_i$ is a $(\Delta-2)^-_{G^*}$-vertex, it is adjacent to at least three leaves  in $H$, including $v_i$; let $v'_{i,1}$ and $v'_{i,2}$ be two other leaves adjacent to $v'_i$ in $H$. 

We now define special colors $\alpha_1$ and $\alpha_2$ as follows.
 For  $i\in [2]$, if $f(uw)\in f(\Gamma_{H}(v'_i))$, 
then we let $\alpha_i:=f(uw)$. Otherwise, the set $f(\Gamma_{H}(v'_i)) \setminus f(\Gamma_{H}(w))$ is nonempty, and we let $\alpha_i$ be any color in this difference.
Note that $\alpha_2=\alpha_1$ is possible. By definition,
\begin{equation}\label{0829}
 |\{\alpha_1,\alpha_2,f(uw)\}\cap f(\Gamma_{H}(v'_i))|\leq 2\quad \mbox{for}\; i\in [2].
\end{equation}

Also, for $i\in [2]$, the colors $f(v'_iv_i), f(v'_iv'_{i,1}), f(v'_iv'_{i,2})$ are all distinct. So by~\eqref{0829},
 at least one of them is not in $\{\alpha_1,\alpha_2, f(uw)\}$. Hence we can switch these  colors so that 
$f(v'_iv_i)\notin \{\alpha_1,\alpha_2,f(uw)\}$. Then $f\vert_{E(G)}$ is a partial $(3\Delta-3)$-coloring of $G$.

Let $H'=H+v_1u+v_2u$. Then 
$$|N^2_{H'}[uv_i]|\leq |\Gamma_{H'}(w)|+|\Gamma_{H'}(v'_i)|+|\Gamma_{H'}(v_{3-i})|+1\leq\Delta+\Delta+2+1\leq 3\Delta-3.
$$
Thus for $i\in[2]$, we can  extend $f$ to $uv_i$  by coloring it with a color not in  $f(N^2_{H'}[uv_i])$.

 Now we will color the pendant edges of $G$ incident with $u$, if there are any. 
At first, if there are at least two such edges $uu'_1,uu'_2$ then we do the following :  if $\alpha_1\neq f(wu)$, then we let $f(uu'_1)=\alpha_1$, if $\alpha_2\notin \{f(uw),\alpha_1\}$, then we let $f(uu'_2)=\alpha_2$.
Then we color the remaining pendant edges of $G$ incident with $u$, if there are any. 
After coloring all pendant edges incident to $u$, for $i\in [2]$,
\begin{equation}\label{0830a}
 \mbox{\em $d_G(u)\leq 4$ or $N^2_{G}[v_iz]$ contains two edges of color $\alpha_i$ for every $1_G$-neighbor $z$ of $v_i$.}
\end{equation}
Then we color the remaining edges in $\Gamma_G(u)$
one by one, since the only colored edges that could be in conflict are in $\Gamma_G(w)\cup \Gamma_G(u)\cup \{v_1v'_1,v_2v'_2\}$,
and there are at most $2\Delta+1$ such vertices.
Finally, we remove the edges in $E(H)\setminus E(G)$ and for $i\in [2]$  color the leaves of $G$ incident with $v_i$ one by one.
For every  leaf $v_iz\in E(G)$ incident with $v_i$, at the moment of coloring $v_iz$,
 the number $M(v_iz)$   of the  colors forbidden for $v_iz$ is at most $|N^2_{G}[v_iz] \setminus \{v_iz\}|$. Moreover,
by~\eqref{0830a}, if $d_G(u)\geq 5$, then  $M(v_iz)\leq |N^2_{G}[v_iz]\setminus \{v_iz\}|-1$. In any case,
$$M(v_iz)\leq d_{G}(v'_i)+\max\{4, d_{G}(u)-1\}+ (d_{G}(v_i)-3)\leq \Delta+\max\{4,\Delta-1\}+(\Delta-3)=3\Delta-4.$$
So we can always  find a free color for $v_iz$, for each $i\in[2]$.
This yields a $(3\Delta-3)$-coloring of $G$, a contradiction.
\end{proof}
  
Now we will complete the proof of the theorem using discharging: For each $v\in V(G^*)$, we 
let the {\em initial charge} $\ch(v):= d_{G^*}(v)$, and then will move
charge among vertices so that the {\em final charge} $\ch^*(v)$ is at least $8/3$ for  each $v\in V(G^*)$,
but the total sum of charge will be preserved during the entire process.
This will imply that 
\begin{equation}\label{0831}
 \sum_{v\in V(G^*)}d_{G^*}(v)=\sum_{v\in V(G^*)}ch(v)=\sum_{v\in V(G^*)}ch^*(v)\geq \frac{8}{3}|V(G^*)|,
\end{equation}
contradicting the fact that $\Mad(G^*)<\frac{8}{3}$.

The rules of discharging are the following.
\begin{itemize}
\item[(R1)] Each $(\Delta-1)^+_{G^*}$-vertex sends charge $2/3$ to each neighbor.
\item[(R2)] Each vertex $v$ with $4\leq d_{G^*}(v)\leq \Delta-2$  sends charge $1/3$ to every  $2_{G^*}$-neighbor.
\item[(R3)] Each $3_{G^*}$-vertex sends charge $1/3$ to every $2_{G^*}$-neighbor that is not special.
\end{itemize}

By~\eqref{0831}, the theorem will follow from the following claim:

\begin{claim}\label{cl: 8/3 after charge}
For every $v\in V(G^*)$, $ch^*(v)\geq \frac{8}{3}$.
\end{claim}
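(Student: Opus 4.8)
### Proof Plan for Claim~\ref{cl: 8/3 after charge}

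The plan is to verify $ch^*(v) \geq 8/3$ by a case analysis on $d_{G^*}(v)$, using the structural claims already established: $\delta(G^*) \geq 2$ (Claim~\ref{cl: 8/3 min deg 2}), that two adjacent $2_{G^*}$-vertices are both special (Claim~\ref{cl: special 1}), and that a $3_{G^*}$-vertex cannot have two non-special $2_{G^*}$-neighbors (Claim~\ref{cl: special 2}). The key accounting principle is that charge only flows \emph{from} vertices of degree $\geq 3$ \emph{to} $2_{G^*}$-vertices, so vertices of degree $\geq 4$ only lose charge and $2_{G^*}$-vertices only gain it; the delicate case will be $d_{G^*}(v) = 3$.

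First I would handle $d_{G^*}(v) = 2$. Let the two neighbors be $x$ and $y$. By Claim~\ref{cl: special 1}, if both $x$ and $y$ have degree $2$ then $v$ is special, but a special vertex has a $(\Delta-1)^+_{G^*}$-neighbor, which has degree $\geq 3$ — so at least one of $x,y$ has degree $\geq 3$. I would split into: (i) $v$ special — then it has a $(\Delta-1)^+_{G^*}$-neighbor sending $2/3$ by (R1), so $ch^*(v) \geq 2 + 2/3$; the other neighbor has degree $\geq 2$ and sends at least $0$. Actually a special vertex needs its \emph{second} neighbor examined only to confirm $ch^* \geq 8/3$, which already holds from the single $2/3$. (ii) $v$ not special — then neither neighbor is a $(\Delta-1)^+_{G^*}$-vertex. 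If a neighbor has degree $2$, then by Claim~\ref{cl: special 1} $v$ would be special, contradiction; hence both neighbors have degree in $[3,\Delta-2]$ (with $\Delta \geq 9$ this range is nonempty), so each sends $1/3$ by (R2) or (R3), giving $ch^*(v) \geq 2 + 1/3 + 1/3 = 8/3$.

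Next, $d_{G^*}(v) = 3$: here $ch(v) = 3$ and $v$ sends $1/3$ to each non-special $2_{G^*}$-neighbor by (R3), and receives nothing. By Claim~\ref{cl: special 2}, $v$ has at most one non-special $2_{G^*}$-neighbor, so it sends out at most $1/3$, leaving $ch^*(v) \geq 3 - 1/3 = 8/3$. For $d_{G^*}(v) = t$ with $4 \leq t \leq \Delta - 2$: $v$ sends at most $1/3$ to each of its $t$ neighbors by (R2), so $ch^*(v) \geq t - t/3 = 2t/3 \geq 8/3$. For $d_{G^*}(v) = t$ with $\Delta - 1 \leq t \leq \Delta$: $v$ sends $2/3$ to each neighbor by (R1), so $ch^*(v) \geq t - 2t/3 = t/3 \geq (\Delta-1)/3 \geq 8/3$ since $\Delta \geq 9$. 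Finally, if $3 \leq \Delta \leq $ (small values are excluded by hypothesis $\Delta \geq 9$), so every degree range is covered and no degenerate overlap of rules occurs — in particular (R1), (R2), (R3) apply to disjoint degree classes $\{\Delta-1,\Delta\}$, $[4,\Delta-2]$, $\{3\}$, which partition $[3,\Delta]$ precisely because $\Delta \geq 9$ forces $\Delta - 2 \geq 7 > 4$.

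The main obstacle I anticipate is not any single computation but making sure the rules are consistently applied: one must check that a $2_{G^*}$-vertex never receives charge from the same neighbor under two different rules, and that the "special" exemption in (R3) is exactly compensated by the fact (from Claims~\ref{cl: special 1} and~\ref{cl: special 2}) that a special $2_{G^*}$-vertex always has a $(\Delta-1)^+_{G^*}$-neighbor paying via (R1). The cleanest way to organize this is to phrase case (i) of the degree-$2$ analysis as: every special vertex receives $2/3$ from a single neighbor, which alone suffices — so the role of the structural claims is precisely to guarantee that a $2_{G^*}$-vertex failing to collect $1/3 + 1/3$ is special and hence collects $2/3$ instead.
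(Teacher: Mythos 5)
Your proposal is correct and follows essentially the same route as the paper's proof: the identical case split on $d_{G^*}(v)$ into the classes $\{2\}$ (subdivided by whether $v$ is special), $\{3\}$, $[4,\Delta-2]$, and $\{\Delta-1,\Delta\}$, invoking Claim~\ref{cl: special 1} to rule out $2_{G^*}$-neighbors of a non-special $2_{G^*}$-vertex and Claim~\ref{cl: special 2} to bound the outflow from a $3_{G^*}$-vertex by $1/3$. The arithmetic in each case matches the paper's, so no further comment is needed.
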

\begin{proof} We consider several cases depending on the degree of $v$.

 If $v$ is a special $2_{G^*}$-vertex, then it receives charge $2/3$ from its $(\Delta-1)^+_{G^*}$-neighbor by Rule (R1)
and gives out nothing. 
Thus $\ch^*(v) \geq \ch(v)+ 2/3 = 8/3$.

If $v$ is a $2_{G^*}$-vertex but not  special, then by Claim~\ref{cl: special 1}  it has  two $3^+_{G^*}$-neighbors, and
 each of them sends $v$ charge $1/3$ either by (R2) or by (R3). 
Thus $\ch^*(v)\geq \ch(v)+ 1/3+ 1/3 = 8/3$.

 If $d_{G^*}(v)=3$ and $v$ is adjacent to exactly $t$ vertices of degree two,
 then by Claim~\ref{cl: special 2}, at least $t-1$ of them are special. 
Thus by Rule (R3), $v$ sends out charge $1/3$ to at most one of its neighbors. Hence $\ch^*(v)\geq \ch(v)- 1/3 = 8/3$.

 If $4\leq d_{G^*}(v)\leq \Delta-2$, then by (R2), $v$ sends charge at most $1/3$ to each of its neighbors. 
Thus $\ch^*(v)\geq \ch(v) - \frac{d_{G^*}(v)}{3} \geq 2\ch(v)/3 \geq 8/3$.

Finally, if $d_{G^*}(v)\geq \Delta-1$, then by (R1), $v$ sends charge  $2/3$ to each of its neighbors. 
Thus $\ch^*(v) \geq \ch(v) - \frac{2d_{G^*}(v)}{3} =\ch(v)/3 \geq (\Delta-1)/3 \geq 8/3$ since $\Delta\geq 9$.
\end{proof}

\section{Graphs with maximum average degree less than three.}

In this section, instead of  Theorem~\ref{mad3} we  prove the following stronger result.

\begin{theorem}\label{lem: 3Delta}
Let $\Delta\geq 7$ be an integer
and let $G$ be a graph with no $3$-regular subgraph. 
If $\Delta(G)\leq \Delta$ and $\Mad(G)\leq 3$, then $\chi'_s(G)\leq 3\Delta$.
\end{theorem}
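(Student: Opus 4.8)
The plan is to mimic the structure of the proof of Theorem~\ref{8:3}: take a counterexample $G$ with the fewest $2^+$-vertices and, subject to that, the fewest edges, let $\Delta=\Delta(G)$, and let $G^*$ be obtained from $G$ by deleting all degree-$1$ vertices. The first reductions should be exactly analogous: a $1_{G^*}$-vertex $u$ forces a sibling leaf $v\in N_G(u)$, and deleting $v$ gives a smaller graph with a $(3\Delta)$-coloring; since $|N^2_G[uv]|=\sum_{x\in N_G(u)}d_G(x)\le \Delta+(\Delta-1)\le 3\Delta$ for $\Delta\ge 2$, we can extend, so $\delta(G^*)\ge 2$. The hypothesis ``no $3$-regular subgraph'' together with Lemma~\ref{lem: 3-reg light} is the key structural tool: in the discharging phase we will exploit that $G^*$, having a subgraph of minimum degree $3$ only if that subgraph is $3$-regular (which is forbidden once we also kill $2^+$-vertices appropriately), cannot have a vertex that is light; equivalently, every $3^+_{G^*}$-vertex has at least three non-pale neighbors, i.e. neighbors with at least three $3^+_{G^*}$-neighbors.

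Next I would establish a battery of reducible-configuration claims, refining the ``special vertex'' idea of Section~4. The main ones I expect to need: (i) two adjacent $2_{G^*}$-vertices are both special (adjacent to a $(\Delta-1)^+_{G^*}$-vertex), proved by the same surgery — delete the connecting edge and the nearby leaves, pad the outer endpoints $w_1,w_2$ up to degree $\Delta$, color the smaller graph, introduce a ``special color'' $\alpha$ to reconcile $\Gamma(w_1)$ and $\Gamma(w_2)$, switch leaf colors at $w_1$, extend across $u_1u_2$ using $2\Delta+1\le 3\Delta$, then greedily finish the pendant edges using the bound $M(u_1z)\le d_G(w_1)+\max\{2,d_G(u_2)-1\}+(d_G(u_1)-3)\le 3\Delta-1$; (ii) a $3_{G^*}$-vertex with two $2_{G^*}$-neighbors has at least one of them special (the analogue of Claim~\ref{cl: special 2}, with the bound $M(v_iz)\le \Delta+\max\{4,\Delta-1\}+(d_G(v_i)-3)\le 3\Delta-1$); and likely (iii) and (iv): bounds limiting how many non-special $2_{G^*}$-neighbors a $3_{G^*}$- or $4_{G^*}$-vertex can have, and a claim that a ``light''-type vertex of small degree cannot occur, which is where Lemma~\ref{lem: 3-reg light} enters. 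Each claim is proved by the same recipe: delete the offending edges/leaves, pad outer vertices to degree $\Delta$ (this is what keeps $\Mad$ below $3$ since we add only leaves), invoke minimality, reconcile palettes with one or two special colors, and finish greedily — the inequalities are looser here than in Section~4 because the budget is $3\Delta$ rather than $3\Delta-3$.

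Finally I would run a discharging argument to contradict $\Mad(G^*)\le 3$ — actually $\overline d(G^*)<3$ since deleting leaves strictly shrinks the graph — by assigning $\ch(v):=d_{G^*}(v)$ and redistributing so that every vertex ends with charge at least $3$. A natural rule set: each $2_{G^*}$-vertex pulls $1/2$ from each neighbor of degree $\ge 4$ or from a $(\Delta-1)^+$-neighbor, and pulls smaller amounts from $3_{G^*}$-neighbors only when it is not special; $3_{G^*}$-vertices, which need to gain $0$ net, must send at most nothing on balance, and here the special-vertex claims (i),(ii) and the light-vertex claim ensure a $3_{G^*}$-vertex has at most one non-special $2$-neighbor and simultaneously has enough high-degree neighbors to stay afloat; $k_{G^*}$-vertices with $k\ge 4$ send at most $(k-3)/k\cdot$(something) and remain $\ge 3$ because $k - \tfrac12 k \ge 3$ fails for $k=4,5$, so the rule for large-degree donation must be calibrated (e.g. send $1/2$ only to $2$-neighbors and use the structural claims to bound their number), and $(\Delta-1)^+$-vertices survive because $\Delta\ge 7$ gives $d - \tfrac23 d = d/3 \ge 2$, which is not quite $3$, signalling that the high-degree donation should be at most, say, $1/2$ per neighbor so that $d/2\ge 3$ for $d\ge 6$ and the $\Delta\in\{?\}$ edge cases are handled by (A2)-type degree restrictions if present. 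The main obstacle I anticipate is precisely this calibration: choosing discharging amounts and the accompanying list of reducible configurations so that (a) every reducible configuration really is reducible within the $3\Delta$ budget, and (b) the charge bookkeeping closes for the awkward degrees $3,4,5$ and for $\Delta$ near $7$ — this is why the authors remark that $\Delta\ge 6$ is provable but messier. Getting (i),(ii) plus a clean ``no light vertex'' consequence of Lemma~\ref{lem: 3-reg light} to interlock with the discharging rules is the crux.
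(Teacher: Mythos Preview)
Your plan has the right skeleton but misses several load-bearing ideas that the paper's proof actually needs.

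\textbf{Misuse of Lemma~\ref{lem: 3-reg light}.} You write that $G^*$ ``cannot have a vertex that is light; equivalently, every $3^+_{G^*}$-vertex has at least three non-pale neighbors''. Lemma~\ref{lem: 3-reg light} says nothing of the sort: it applies only to subgraphs $G'$ with $\delta(G')\ge 3$, whereas $\delta(G^*)=2$. In the paper the lemma is used in a completely different way: when a reduction adds a \emph{new edge} (not just leaves) to form $H$, one must check that $H$ has no $3$-regular subgraph, and Lemma~\ref{lem: 3-reg light} rules this out because the new edge is incident to a vertex whose neighbors are mostly pale. Your proposed reductions only add leaves, so you never invoke the lemma at all --- and indeed the ``special vertex'' reductions copied from Section~4 are not strong enough here.

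\textbf{The potential function is essential.} The paper introduces $\rho_G(A)=3|A|-2|E(G[A])|$ and uses it critically in the key claim that no vertex sponsors two very poor vertices (Claim~\ref{cl: no two very poor}). The reductions there add genuine edges (e.g.\ $vw$ or $ww'$) rather than leaves, and verifying that the resulting graph still satisfies $\Mad\le 3$ requires a delicate potential calculation across several overlapping sets, using submodularity (Lemma~\ref{lem: super-modular}). This machinery has no analogue in Section~4 and is entirely absent from your plan.

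\textbf{New structural vocabulary.} The Section~4 notion of ``special'' is replaced by a finer taxonomy: poor $3_{G^*}$-vertices, \emph{very poor} $2_{G^*}$-vertices with \emph{sponsors} and \emph{rivals} of three types (T1)--(T3), together with \emph{sinks}, \emph{lower links}, and \emph{semi-links}. The reducibility claims (Claim~\ref{cl: reducible 1}) are phrased in terms of these and feed an $(f,3\Delta)$-degenerate sequence $(S'(G),S(G),B(G))$ via Lemma~\ref{lem: degenerate}. The discharging rules (five of them, with weights $1$, $1/2$, $1/4$) are calibrated to this taxonomy, not to ``special'' vertices.

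\textbf{Discharging does not finish the proof.} You aim to contradict $\overline d(G^*)<3$, but that strict inequality is not available (and your justification ``deleting leaves strictly shrinks the graph'' does not imply it). The paper only has $\Mad(G^*)\le 3$, so discharging yields $\ch^*(v)=3$ for \emph{every} $v$ --- equality, not contradiction. A second phase (Claim~\ref{cl: after structure}) then exploits this equality to force $\Delta(G^*)\le 4$, eliminate very poor vertices and sinks, and pin down the structure around $2_{G^*}$- and $4_{G^*}$-vertices, after which a final explicit reduction gives the contradiction. Your outline stops where the real work is only half done.
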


\subsection{Set-up of the proof and some notation}

To prove Theorem~\ref{lem: 3Delta}, we consider a counterexample $G$ with the
fewest $2^+$-vertices, and subject to this with the fewest edges.
Let $G^*$ be the graph  obtained from $G$ by deleting all vertices of degree $1$. 
Similarly to the proof of Theorem~\ref{8:3}, we will show that vertices of ``low" degree in $G^*$ have neighbors with ``high" degree, and based on this use discharging to prove that the average degree of $G^*$ is greater than $3$.

A feature not used in the previous proofs 
is the notion of potentials.
 For a graph $G$ and  $A\subseteq V(G)$, the {\em potential} of $A$, denoted $\rho_G(A)$, is defined as
$$\rho_G(A):= 3|A| - 2|E(G[A])|.$$
By definition, $\Mad(G)\leq  3$ if and only if $\rho_G(A) \geq 0$ for all $A\subseteq V(G)$.

The following fact on potentials is easy to check.
\begin{lemma}\label{lem: super-modular}
For a graph $G$ and any disjoint  $A,B\subset V(G)$,
$\rho_{G}(A) + \rho_{G}(B) = \rho_{G}(A\cup B) + \rho_{G}(A\cap B)+2|E_G(A\setminus B,B\setminus A)|$.
\end{lemma}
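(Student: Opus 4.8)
\textbf{Plan of the proof of Lemma~\ref{lem: super-modular}.}
The plan is to verify the identity directly by expanding both sides through the definition $\rho_G(X) = 3|X| - 2|E(G[X])|$. First I would rewrite the left-hand side as $3(|A|+|B|) - 2(|E(G[A])| + |E(G[B])|)$, and the right-hand side as $3(|A\cup B| + |A\cap B|) - 2(|E(G[A\cup B])| + |E(G[A\cap B])|) + 2|E_G(A\setminus B, B\setminus A)|$. Since for finite sets $|A| + |B| = |A\cup B| + |A\cap B|$, the vertex-count (coefficient $3$) contributions on the two sides agree outright, so the whole claim reduces to the purely combinatorial edge identity
\begin{equation*}
|E(G[A])| + |E(G[B])| = |E(G[A\cup B])| + |E(G[A\cap B])| - |E_G(A\setminus B, B\setminus A)|.
\end{equation*}

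To establish this edge identity, I would partition the edge set $E(G[A\cup B])$ according to where the endpoints of each edge lie among the three disjoint pieces $A\setminus B$, $A\cap B$, and $B\setminus A$. There are six types: both endpoints in $A\setminus B$; both in $A\cap B$; both in $B\setminus A$; one in $A\setminus B$ and one in $A\cap B$; one in $B\setminus A$ and one in $A\cap B$; and one in $A\setminus B$ and one in $B\setminus A$ (there are no edges ``inside'' $A\cup B$ outside these six types). The edges of the first, second, fourth type are exactly the edges of $G[A]$ that are not counted by... actually more cleanly: $E(G[A])$ consists of the first, second, and fourth types; $E(G[B])$ consists of the second, third, and fifth types; $E(G[A\cup B])$ consists of all six types; $E(G[A\cap B])$ consists of exactly the second type; and $E_G(A\setminus B, B\setminus A)$ is exactly the sixth type. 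Summing: $|E(G[A])| + |E(G[B])|$ double-counts the second type and otherwise covers types $1,3,4,5$ once each, while $|E(G[A\cup B])| + |E(G[A\cap B])| - |E_G(A\setminus B, B\setminus A)|$ covers types $1,2,3,4,5$ once each (the sixth type cancels) plus one extra copy of type $2$. The two tallies match, which proves the edge identity and hence the lemma.

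This is an entirely routine counting argument with no real obstacle; the only thing to be careful about is the bookkeeping of the six edge types and making sure the disjointness of $A\setminus B$, $A\cap B$, $B\setminus A$ is used so that no edge is missed or double-counted. I would present it compactly by just listing which types each term picks up and observing the cancellation, rather than writing out a six-line table.
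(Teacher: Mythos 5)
Your proof is correct; the paper states this lemma without proof (``easy to check''), and your direct expansion together with the six-type classification of edges by the location of their endpoints in $A\setminus B$, $A\cap B$, $B\setminus A$ is exactly the routine verification intended. Note that your argument never uses the stated hypothesis that $A$ and $B$ are disjoint --- which is just as well, since the paper later applies the lemma to overlapping sets (e.g.\ $A_1\cup\{u\}$ and $A_1'\cup\{u'\}$ both contain $v$), so the word ``disjoint'' in the statement is evidently a slip and the general identity you proved is the one actually needed.
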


 A $3_{G^*}$-vertex is {\em poor} if it has exactly one $2_{G^*}$-neighbor.
For a poor $3_{G^*}$-vertex $u$, an edge $uw\in E(G^*)$ is the {\em $u$-sink}  if 
$d_{G^*}(w)=2$ and $N_{G^*}(u)- w$ contains a vertex $u'$ with $d_{G}(u')<\Delta$. 
An edge is a {\em sink} if it is a $u$-sink for some poor $3_{G^*}$-vertex $u$.
By definition, a poor $3_{G^*}$-vertex that is adjacent to two $\Delta_{G^*}$-vertices is not incident to a  sink.

Let $u$ be a $2_{G^*}$-vertex with $N_{G^*}(u) = \{v,w\}$. We say that $u$ is very poor, $v$ is a {\em sponsor of $u$}, and $w$ is a {\em rival of $u$}, if one of the following holds: 
\begin{itemize}

\item[(T1)]   $w$ is a $2_{G^*}$-vertex, or

\item[(T2)]  $w$ is a $3_{G^*}$-vertex with two $2_{G^*}$-neighbors in $G^*$ (including $u$), or

\item[(T3)] $w$ is a $4_{G^*}$-vertex and each vertex in $N_{G^*}(w)$ except at most one is either a $2_{G^*}$-vertex or a poor $3_{G^*}$-vertex.
\end{itemize}

 If a $2_{G^*}$-vertex is not very poor, then we say that it is {\em poor}.
See Figure~\ref{fig:poor} for an illustration. 
Poor vertices will be the recipients of charge in the discharging procedure.

\begin{figure}[h]
	\begin{center}
  \includegraphics[scale=0.8]{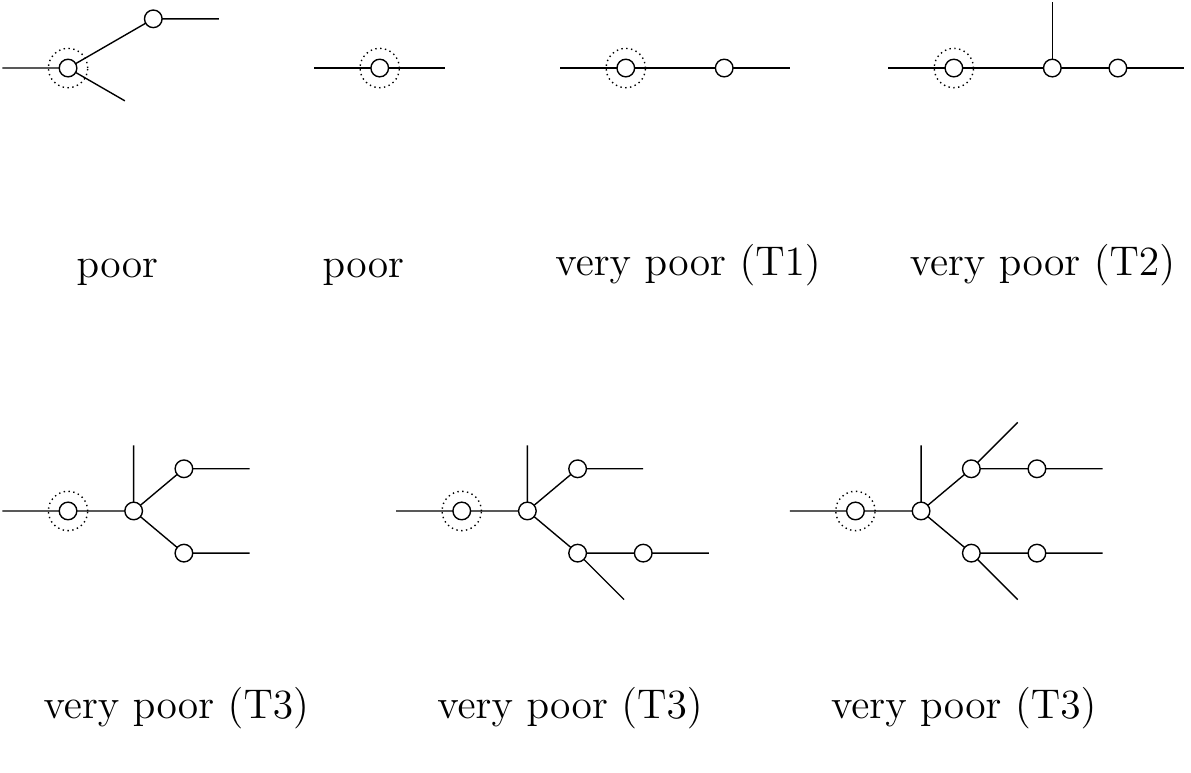}
  \caption{Poor vertices and very poor vertices.}
  \label{fig:poor}
	\end{center}
\end{figure}

If $u$ is a very poor vertex with a sponsor $v$ and a rival $w$, then 
the edge $uw$ is a {\em lower link} of $u$. 
Furthermore, if $w$ is the rival of type (T3) of $u$ and
$w'$ is a poor $3_{G^*}$-vertex in $N_{G^*}(w)$,
then $ww'$ a {\em semi-link} of $w$.  

Let $B(G)$ denote the set of all sinks in $G$. 
Similarly, let $S(G)$ and $S'(G)$ denote the set of all lower links in $G$ and the set of all semi-links in $G$, respectively.
By definition, all poor vertices and all very poor vertices are light. Also the rival of each very poor vertex is light.

\subsection{Structure of $G$ and $G^*$}
The next claim is in the spirit of Claims~\ref{cl: special 1} and~\ref{cl: special 2}.

\begin{claim}\label{cl: basic reducible}
Let $v\in V(G^*)$.
\begin{enumerate}[label=(\roman*)]
\item  If  
\begin{equation}\label{s1}
 \sum_{u\in N_{G^*}(v)} d_{G}(u)\leq 2\Delta + d_{G^*}(v),
 \end{equation}
  then $d_{G}(v)=d_{G^*}(v)$.
\item  $d_{G^*}(v)\geq 2$.
\item If $d_{G^*}(v)=2$, then $d_{G}(v)=2$.
\item If $d_{G^*}(v)=3$ and $v$ has a neighbor $u\in V(G^*)$ with $d_{G}(u)\leq 3$, then $d_{G}(v)=3$. 
In particular, if $v$ is poor $3_{G^*}$-vertex, then  $d_{G}(v)=3$. 
\end{enumerate}
By (ii), $\delta(G^*)\geq 2$.
\end{claim}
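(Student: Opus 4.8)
The plan is to prove the four parts in order, since each later part invokes the minimality of $G$ via the same discharging-free "delete and extend" argument already used in Claims~\ref{cl: special 1} and~\ref{cl: special 2}, and part (ii) follows from (i) while part (iii) follows from (i) and (ii). For part (i), I would argue by contradiction: suppose \eqref{s1} holds but $d_G(v)>d_{G^*}(v)$, so $v$ has a $1_G$-neighbor $z$. Delete $z$ to get $G'=G-z$; since deleting a leaf cannot create a $3$-regular subgraph, raise the average degree, or increase $\Delta$, $G'$ satisfies the hypotheses of Theorem~\ref{lem: 3Delta}, and $G'$ has the same number of $2^+$-vertices as $G$ but fewer edges. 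By minimality of $G$, $G'$ has a $(3\Delta)$-coloring $f$, which is a partial $(3\Delta)$-coloring of $G$ with only $vz$ uncolored. The number of forbidden colors for $vz$ is at most $|N_G^2[vz]\setminus\{vz\}|$, and the edges in $N_G^2[vz]$ other than $vz$ are the edges at $v$ (there are $d_G(v)-1$ of them besides $vz$, but counting $vz$ itself we get $d_G(v)$; more carefully $|\Gamma_G(v)|=d_G(v)$) together with, for each neighbor $u$ of $v$, the $d_G(u)-1$ edges at $u$ other than $vu$. Hence
\[
|N_G^2[vz]\setminus\{vz\}|\le \bigl(d_G(v)-1\bigr)+\sum_{u\in N_G(v)}\bigl(d_G(u)-1\bigr)
= d_G(v)-1-d_G(v)+\sum_{u\in N_G(v)}d_G(u),
\]
and since the leaves of $v$ contribute $1$ each while the $d_{G^*}(v)$ non-leaf neighbors contribute at most $\sum_{u\in N_{G^*}(v)}d_G(u)$, the whole quantity is at most $(d_G(v)-d_{G^*}(v))\cdot 1 + \sum_{u\in N_{G^*}(v)}d_G(u) - 1 \le (\Delta - d_{G^*}(v)) + (2\Delta+d_{G^*}(v)) - 1 = 3\Delta-1 < 3\Delta$, so a free color exists and we can extend $f$, a contradiction. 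I would present this count cleanly rather than in the messy form above.

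For part (ii), suppose $d_{G^*}(v)\le 1$. If $d_{G^*}(v)=0$ then $v$ is an isolated vertex of $G^*$, so in $G$ it has only leaf neighbors; but then $G$ restricted to the star at $v$ is a component that is strong-edge-colorable with $\le \Delta \le 3\Delta$ colors, and removing it contradicts minimality (or: $v$ would not be a $2^+$-vertex issue — more simply, $\sum_{u\in N_{G^*}(v)}d_G(u)=0\le 2\Delta+0$, so (i) gives $d_G(v)=0$, contradicting that $v\in V(G^*)$ means $d_G(v)\ge 2$). If $d_{G^*}(v)=1$ with unique $G^*$-neighbor $w$, then $\sum_{u\in N_{G^*}(v)}d_G(u)=d_G(w)\le\Delta\le 2\Delta+1$, so part (i) forces $d_G(v)=d_{G^*}(v)=1$, contradicting $v\in V(G^*)$. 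Thus $\delta(G^*)\ge 2$. Part (iii) is then immediate: if $d_{G^*}(v)=2$ with neighbors $w_1,w_2$, then $\sum d_G(w_i)\le 2\Delta = 2\Delta + 0 \le 2\Delta+2$, so (i) gives $d_G(v)=2$. For part (iv), if $d_{G^*}(v)=3$ with a $G^*$-neighbor $u$ satisfying $d_G(u)\le 3$, then $\sum_{u'\in N_{G^*}(v)}d_G(u')\le 3 + \Delta + \Delta = 2\Delta+3 = 2\Delta+d_{G^*}(v)$, so (i) applies; the "in particular" follows since a poor $3_{G^*}$-vertex has a $2_{G^*}$-neighbor $u$, which by (iii) satisfies $d_G(u)=2\le 3$.

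The main obstacle is purely bookkeeping: getting the edge-count $|N_G^2[vz]\setminus\{vz\}|$ exactly right so that the bound comes out $\le 3\Delta-1$ (strict inequality is needed to guarantee a free color), and making sure in each application of (i) that the relevant sum over $N_{G^*}(v)$ genuinely obeys \eqref{s1} — in particular using $\Delta(G)\le\Delta$ to bound $d_G(u)\le\Delta$ for the "high-degree" neighbors and the hypothesis $d_G(u)\le 3$ (resp.\ $d_{G^*}(v)=2$, $=1$) for the "low-degree" ones. There is no topological subtlety: deleting a leaf trivially preserves "no $3$-regular subgraph," $\Mad\le 3$, and $\Delta(G)\le\Delta$, and it strictly decreases the edge count without changing the count of $2^+$-vertices, so minimality applies in every case.
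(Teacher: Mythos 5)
Your proposal is correct and follows essentially the same route as the paper: part (i) is proved by deleting the $1_G$-neighbor, invoking minimality, and bounding $|N^2_G[vz]|$ by $d_G(v)+\sum_{u\in N_G(v)}(d_G(u)-1)\le 3\Delta$ using \eqref{s1} and the fact that leaf neighbors contribute nothing to the sum, and parts (ii)--(iv) are exactly the same reductions to (i). The only (immaterial) difference is that you also spell out the $d_{G^*}(v)=0$ case in (ii) and note that the number of $2^+$-vertices may drop rather than stay equal, neither of which affects the argument.
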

\begin{proof}
(i) Suppose~\eqref{s1} holds. If $d_{G}(v) \neq d_{G^*}(v)$, then $v$ has a $1_G$-neighbor $w$. 
By the minimality of $G$, the graph $G-w$ has a $3\Delta$-coloring $f$, which is a partial $3\Delta$-coloring of $G$. However by~\eqref{s1}, 
$$|N^2_{G}[vw]| \leq d_{G}(v) + \sum_{u\in N_{G}(v)} (d_{G}(u)-1) = d_{G}(v) + \sum_{u\in N_{G^*}(v)} d_{G}(u) - d_{G^*}(v) \leq d_{G}(v)
 + 2\Delta \leq 3\Delta.$$
 Thus we can extend $f$ to $vw$,  a contradiction to the definition of $G$. This proves (i).
\newline

(ii) Assume that $v$ is a $1_{G^*}$-vertex. Let $u$ be the unique neighbor of $v$ in $G^*$. 
Since $v\in V(G^*)$, we have $d_G(v)\geq 2 > d_{G^*}(v)$. However, 
$\sum_{w\in N_{G^*}(v)} d_{G}(w) = d_{G}(u)  \leq \Delta $, and~\eqref{s1} holds. 
Together with  $d_G(u)> d_{G^*}(u)$, this contradicts (i). 
This proves (ii), which implies $d_{G^*}(v)\geq 2$.
\newline

(iii) If $d_{G^*}(v)=2$, then $\sum_{u\in N_{G^*}(v)} d_{G}(u) \leq 2\Delta$. Thus (i) implies (iii).
\newline
 
(iv) If $d_{G^*}(v)=3$ and $v$ has a neighbor $u\in V(G^*)$ with $d_{G}(u)\leq 3$, then
 $$\sum_{w\in N_{G^*}(v)} d_{G}(w) \leq (d_{G^*}(v)-1)\Delta+d_{G}(u) \leq 2\Delta + 3.$$
 Thus~\eqref{s1} holds, and (i) implies that $d_{G}(v)=d_{G^*}(v)=3$. The ``In particular" part follows from (ii)
 and the definition of a poor $3_{G^*}$-vertex.
\end{proof}

The next claim collects more properties of neighbors of poor and very poor vertices.

\begin{claim}\label{cl: reducible 1} Graph $G$ possesses the following properties.
\begin{enumerate}[label=(\roman*)]
\item If $u$ is either a poor or a very poor vertex and $uv$ is a lower link, a semi-link, or  a sink, then $d_{G}(v)=d_{G^*}(v)$ and $d_{G}(u) = d_{G^*}(u)$.
\item 
If $e$ is a sink, then $|N_{G}^2[e]|\leq 3\Delta$.

\noindent If $e$ is a lower link, then $|N_{G}^2[e]\setminus B(G)|\leq 3\Delta-1$. 

\noindent If $e$ is a semi-link, then $|N_{G}^2[e]\setminus (B(G)\cup S(G))|\leq 3\Delta-1$.

\item If $v$ is a sponsor of a very poor vertex $u$, then $v$ is a $\Delta_{G^*}$-vertex.
\item If $u$ is a poor $3_{G^*}$-vertex, then it is adjacent to at least one $4^+_{G^*}$-vertex.

\item If $w$ is a rival of a very poor vertex $u$, then all but at most one neighbor of $w$ is pale.
\end{enumerate}
\end{claim}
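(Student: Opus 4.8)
The plan is to prove the five parts in the order (i), (v), (ii), (iv), (iii): parts (i) and (v) are bookkeeping on top of Claim~\ref{cl: basic reducible}, part (ii) is a second–neighbourhood count using (i), and the reducible statements are (iv) (a short deletion argument) and (iii) (the main work). For parts (i) and (v): each endpoint of a sink, lower link, or semi-link is one of the following — a $2_{G^*}$-vertex; a poor $3_{G^*}$-vertex; a $3_{G^*}$-vertex with two $2_{G^*}$-neighbours (a type-(T2) rival); or a $4_{G^*}$-vertex all but one of whose neighbours is a $2_{G^*}$-vertex or a poor $3_{G^*}$-vertex (a type-(T3) rival). For the first two, $d_G=d_{G^*}$ is immediate from Claim~\ref{cl: basic reducible}(iii)--(iv); for a type-(T2) rival $w$, the neighbour $u$ has $d_G(u)=2\le 3$, so $d_G(w)=3$ by Claim~\ref{cl: basic reducible}(iv); for a type-(T3) rival $w$, one checks $\sum_{x\in N_{G^*}(w)}d_G(x)\le 3\cdot 3+\Delta=\Delta+9\le 2\Delta+4$ (using $\Delta\ge 7$ and, via Claim~\ref{cl: basic reducible}(iv), that poor $3_{G^*}$-vertices have $d_G=3$), so $d_G(w)=d_{G^*}(w)=4$ by Claim~\ref{cl: basic reducible}(i); this proves (i). For (v): a $2_G$-vertex is pale, a poor $3_{G^*}$-vertex $p$ has $d_G(p)=3$ with a $2_G$-neighbour and hence at most two $3^+_G$-neighbours, so $p$ is pale; and a rival $w$ has exactly $d_{G^*}(w)\le 4$ neighbours in $G$ by the count just made, all but at most one being a $2_{G^*}$-vertex or a poor $3_{G^*}$-vertex, hence pale.

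\emph{Part (ii).} For a special edge $e=ab$, one endpoint (say $a$) has $d_G(a)\in\{2,3\}$, so $N^2_G[e]=\{e\}\cup\bigcup_{z\in(N_G(a)\setminus\{b\})\cup N_G(b)}\Gamma_G(z)$, which I would bound type by type. For a sink $uw$ with $N_{G^*}(u)=\{w,u_1,u_2\}$ and $N_G(w)=\{u,w'\}$, one gets $N^2_G[uw]=\{uw\}\cup\Gamma_G(w')\cup\Gamma_G(u_1)\cup\Gamma_G(u_2)$, so $|N^2_G[uw]|\le 1+d_G(w')+d_G(u_1)+d_G(u_2)$; the defining property of a sink forces $\min\{d_G(u_1),d_G(u_2)\}\le\Delta-1$, whence $|N^2_G[uw]|\le 1+\Delta+\Delta+(\Delta-1)=3\Delta$. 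For a lower link $uw$ whose rival is of type (T1) or (T2), the analogous union has size at most $2\Delta+3\le 3\Delta-1$ outright. For a lower link $uw$ whose rival $w$ is of type (T3), writing $v$ for the sponsor of $u$ and $a,b,c$ for $w$'s other neighbours with, say, $a,b$ being $2_{G^*}$- or poor-$3_{G^*}$-vertices, one has $N^2_G[uw]=\{uw\}\cup\Gamma_G(v)\cup\Gamma_G(a)\cup\Gamma_G(b)\cup\Gamma_G(c)$, and the crude bound $2\Delta+7$ is too weak for $\Delta\in\{7,8\}$; the key observation is that whenever $a$ (resp.\ $b$) is a poor $3_{G^*}$-vertex, its edge to its unique $2_{G^*}$-neighbour is a sink (an $a$-sink, since $w\in N_{G^*}(a)$ and $d_G(w)=4<\Delta$) lying in $\Gamma_G(a)$ (resp.\ $\Gamma_G(b)$); discarding these one or two elements of $B(G)$ gives $|N^2_G[uw]\setminus B(G)|\le 2\Delta+5\le 3\Delta-1$ for $\Delta\ge 7$. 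The semi-link bound is obtained the same way: inside $N^2_G[ww']$ one locates a lower link incident to the very poor vertex that $w$ is a rival of, a sink incident to $w'$, and a third short or sink-bearing neighbour of $w$ (a type-(T3) rival has, besides $u$ and $w'$, a further $2_{G^*}$- or poor-$3_{G^*}$-neighbour), and discards these from $B(G)\cup S(G)$.

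\emph{Part (iv).} Suppose a poor $3_{G^*}$-vertex $u$ with $N_{G^*}(u)=\{x,y,z\}$, $d_{G^*}(x)=2$, has $d_{G^*}(y)=d_{G^*}(z)=3$. By Claim~\ref{cl: basic reducible}(iii)--(iv), $d_G(x)=2$ and $d_G(u)=d_G(y)=d_G(z)=3$. Then $G-u$ has strictly fewer $2^+$-vertices than $G$ and still satisfies the hypotheses of Theorem~\ref{lem: 3Delta}, so by minimality it has a $3\Delta$-coloring; colouring $uy$, $uz$, $ux$ back in this order works because $|N^2_G[uy]|,|N^2_G[uz]|\le 2\Delta+6\le 3\Delta$ and $|N^2_G[ux]|\le\Delta+7\le 3\Delta$ (the first two bounds use $d_G(y)=d_G(z)=3$), contradicting $\chi'_s(G)>3\Delta$.

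\emph{Part (iii).} This is the main obstacle. Let $u$ be very poor with sponsor $v$ and rival $w$, and suppose $d_{G^*}(v)\le\Delta-1$. Following the template of the reductions in Claims~\ref{cl: special 1} and~\ref{cl: special 2}, I would build $H$ from $G$ by deleting $u$ (and, if $w$ is a type-(T1) rival, also $w$) and then adding pendant edges so that $v$ has degree exactly $\Delta-1$ while $w$'s non-pale neighbour (and, in the (T1) case, also $w'$) has degree exactly $\Delta$; a potential computation via the definition of $\rho$ and Lemma~\ref{lem: super-modular} shows this keeps $\Mad\le 3$, creates no $3$-regular subgraph, and strictly reduces the number of $2^+$-vertices, so by minimality $H$ has a $3\Delta$-coloring $f$. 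I would then restrict $f$ to a partial $3\Delta$-coloring of $G$ (with $uv$, $uw$, and the original pendant edges, if any, left uncoloured), switch colours among the pendant edges at $v$ and at $w$'s high-degree neighbour to kill the conflicts the restriction introduces and to reserve a suitable colour near the edge $uv$, and finally extend the colouring by Lemma~\ref{lem: degenerate}, processing $uw$ and the restored pendant edges before $uv$ and before the sinks and links; the degeneracy inequalities for $uw$, for the links, and for the sinks are exactly the bounds of part~(ii), and the inequality for $uv$ is where the reserved colour, hence the hypothesis $d_{G^*}(v)<\Delta$, is used. The three rival types (T1), (T2), (T3) are treated as parallel sub-cases, (T3) being the heaviest. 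The delicate point — and the reason the construction of $H$ and the colour-switching must be carried out carefully — is precisely the extension of $uv$: since $N^2_G[uv]$ can have size of order $\Delta^2$, far exceeding $3\Delta$, the argument cannot colour $uv$ greedily and must genuinely produce, using that $v$ is not $\Delta_{G^*}$ and the freedom to repaint the pendant edges of $v$ (and of $w$'s big neighbour), a colour that is free at $uv$; making this go through uniformly over the three rival types is the bulk of the proof.
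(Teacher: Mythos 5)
Parts (i), (ii), (iv) and (v) of your proposal follow the paper's proof essentially verbatim: the degree identifications via Claim~\ref{cl: basic reducible}, the second-neighbourhood counts in which the sinks and lower links inside $N^2_G[e]$ are discarded to save the one or two units needed for small $\Delta$, and the deletion of the vertex $u$ followed by a three-edge degenerate sequence in (iv), are all exactly the paper's arguments and are correct.

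Part (iii) has a genuine gap, and you have located it yourself: because your construction deletes the \emph{vertex} $u$, the edge $uv$ must be coloured from scratch at the end, and since $|N^2_G[uv]|$ can be of order $\Delta^2$ this cannot be done greedily; you state that producing a free colour for $uv$ ``is the bulk of the proof,'' but you never produce one, and repainting pendant edges at $v$ cannot by itself defeat the $\Theta(\Delta^2)$ colours seen through $v$'s other neighbours. The paper sidesteps the problem by deleting only the \emph{edge} $uw$ and adding leaves at $v$ until $d_H(v)=\Delta$. Then $u$ is a leaf of $v$ in $H$, so $uv$ arrives already coloured by the colouring $f'$ of $H$ and every conflict through $v$ is automatically respected; the only conflicts that reinstating $uw$ can create for $uv$ are with coloured edges of $\Gamma_G(w)$. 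The key structural observation, absent from your sketch, is that in each of the cases (T1)--(T3) the set $\Gamma_G(w)\setminus(S(G)\cup S'(G))$ contains at most one edge $e'$, so after uncolouring $B(G)\cup S(G)\cup S'(G)$ the unique possible conflict is with $e'$, and a single swap of the colours of $vu$ and a second pendant edge $vx$ at $v$ (which exists precisely because $d_{G^*}(v)\le\Delta-1$) arranges $f'(vu)\neq f'(e')$. Part (ii) then shows that $(S'(G),S(G),B(G))$ is an $(f,3\Delta)$-degenerate sequence containing $uw$, and Lemma~\ref{lem: degenerate} finishes. Without these two ingredients --- keeping $uv$ pre-coloured as a pendant edge instead of recolouring it, and the ``at most one coloured edge at $w$'' observation --- your plan for (iii) does not close.
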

\begin{proof}
(i) Claim~\ref{cl: basic reducible}~(iii) and (iv) implies that $d_{G}(u)=d_{G^*}(u)$.
 If $d_{G^*}(v)\leq 3$, then Claim~\ref{cl: basic reducible}~(iv) implies $d_{G}(v)=d_{G^*}(v)$. 
Otherwise, $u$ is either a very poor $2_{G^*}$-vertex of type (T3) or a poor $3_{G^*}$-vertex where $v$ is a rival of a very poor vertex of type (T3).
In any case, this implies $d_{G^*}(v)=4$. 
Assume $N_{G^*}(v)= \{u,u',u'',z\}$ where each of $u',u''$ is either a very poor $2_{G^*}$-vertex or a 
poor $3_{G^*}$-vertex. Again Claim~\ref{cl: basic reducible} (iii) and (iv) implies that $d_{G}(p)=d_{G^*}(p)$ for $p\in \{u,u',u''\}$.
Then $d_{G}(z)+ d_{G}(u)+ d_{G}(u')+ d_{G}(u'') \leq \Delta + 2+3+3 \leq 2\Delta + d_{G^*}(v)$, in other words,~\eqref{s1} holds. Thus Claim~\ref{cl: basic reducible}~(i) implies that $d_{G}(v)=d_{G^*}(v)$. \newline

(ii) Assume $e=uv$ is a $u$-sink.  This means  $u$ is a $3_{G^*}$-vertex, $v$ is a $2_{G^*}$-vertex, and
 at least one vertex in $N_{G^*}(u)\setminus \{v\}$ has degree less than $\Delta$ in $G$.
 Then  $d_{G}(u)=d_{G^*}(u)=3$ by Claim~\ref{cl: basic reducible}~(iv). Thus 
$$|N^2_G[e]| \leq \sum_{w \in N_{G^*}(u)\setminus \{v\}} d_{G}(w) + \sum_{w\in N_{G^*}(v)\setminus \{u\}} d_{G}(w) + |\{e\}| \leq 2\Delta-1 + \Delta +1 \leq 3\Delta.$$

Assume now $e=uv$ is a lower link of $u$.
Then by definition and (i), $d_G(u)=2$ and $2\leq d_{G}(v)\leq 4$.
If $d_{G}(v)=4$, let $t$ be the number of poor $3_{G^*}$-vertices in $N_{G}(v)$; then $0\leq t\leq 3$.
Depending on the type (T1)--(T3) of $v$ we have 
$$
|N^2_{G}[uv]|\leq 1+ \sum_{y\in (N_{G}(u)\cup N_{G}(v))\setminus\{u,v\}} d_{G}(y)\leq \left\{ \begin{array}{ll}
2\Delta + 1 & \text{ in case of (T1),} \\
2\Delta +3 & \text{ in case of (T2),} \\
2\Delta + 5 + t & \text{ in case of (T3) with } t\leq 2, \\
\Delta + 10 &  \text{ in case of (T3) with } t=3.
\end{array}\right.
$$
Since $d_{G^*}(v)\leq 4 <\Delta$, each poor $3_{G^*}$-vertex $w$ in $N_{G^*
}(v)$ is incident to  a sink. Thus 
$$
|N^2_{G}[uv]\setminus B(G)|\leq \left\{ \begin{array}{lll}
2\Delta + 1 &\leq 3\Delta-1& \text{ in case of (T1),} \\
2\Delta +3 &\leq 3\Delta -1& \text{ in case of (T2),} \\
2\Delta + 5 + t -t &\leq 3\Delta-1& \text{ in case of (T3).} \\
\end{array}\right.
$$

If $e=uv$ is a semi-link of $v$, then, by definition,  $v$ is the rival of some very poor vertex $w$ 
and $u$ is a poor $3_{G^*}$-vertex. So by (i), $d_{G}(v)=d_{G^*}(v)=4$.
Then each poor $3_{G^*}$-vertex $u'\in N_{G^*}(v)$ is incident to a sink, since $v\in N_{G^*}(u')$ satisfies $d_{G}(v)< \Delta$.
Let $t$ be the number of poor $3_{G^*}$-vertices in $N_{G^*}(v)$.
$$
|N^2_{G}[uv]|\leq 1+ \sum_{y\in (N_{G}(u)\cup N_{G}(v))\setminus\{u,v\}} d_{G}(y) \leq 2\Delta + 2(3-t)+ 3t.
$$
Since $N^2_{G}[uv]$ contains at least $t$ sinks and $3-t$ lower links,  $|N^2_{G}[uv]\cap (B(G)\cup S(G))|\geq 3$.
Hence  $|N^2_{G}[ww'] \setminus (B(G)\cup S(G))| \leq 2\Delta +6 +t-3 \leq 3\Delta-1$ since $\Delta\geq 7$ and $1\leq t\leq 3$.\newline

(iii) Suppose $v$ is a sponsor of a very poor vertex $u$ and
 $d_{G^*}(v)\leq \Delta-1$. Let $w$ be the rival of $u$.
Note that in each of the cases (T1)--(T3), $\Gamma_{G}(w)\setminus (S(G)\cup S'(G))$ contains at most one edge;   let this edge be $e'$.
Let $H$ be the graph obtained from $G-uw$ by adding, if necessary, leaves adjacent only to $v$ so that $d_H(v)=\Delta$. Then
$d_H(u)=1$ and since $d_{G^*}(v)\leq \Delta-1$, $v$ has a $1_H$-neighbor $x$ distinct from $u$.

Adding  leaves to a graph does not increase the maximum average degree, if it is at least $2$. It also does a not create new $3$-regular subgraph.
So $\Mad(H)\leq 3$ and $H$ has no $3$-regular subgraphs.
Since $H$ has fewer $2^+$-vertices than $G$, it has a $3\Delta$-coloring $f'$.
Since $x$ and $u$ are symmetric in $H$ and $f'(vx)\neq f'(vu)$, we may assume that $f'(vu)\neq f'(e')$ by changing colors of $vx$ and $vu$ if necessary. 

Let $f(e):= f'(e)$ for each edge $e \in E(H)\setminus (S(G)\cup S'(G)\cup B(G))$. Then $f\vert_{E(G)}$ is a partial $3\Delta$-coloring of $G$ 
since $f'(vu)\neq f'(e')$. 
By (ii) and the fact that $uw\in S'(G)\cup S(G)\cup B(G)$, $(S'(G),S(G),B(G))$ is an $(f\vert_{E(G)},3\Delta)$-degenerate sequence for $G$. Thus we conclude that $G$ is $(f\vert_{E(G)},3\Delta)$-degenerate. Thus Lemma~\ref{lem: degenerate} implies that 
 $\chi'_s(G)\leq 3\Delta$,  a contradiction. This proves (iii). \newline

(iv) Suppose that the neighbors of a poor $3_{G^*}$-vertex $u$ are $v_1,v_2$ and $v_3$, and $d_{G^*}(v_i)\leq 3$ for $i\in[3]$.
By the definition of a poor $3_{G^*}$-vertex, we may assume that 
 $d_{G^*}(v_1)=2$ and $d_{G^*}(v_2)=d_{G^*}(v_3)=3$. 
By Claim~\ref{cl: basic reducible}, $d_{G}(w)=d_{G^*}(w)$ for $w\in \{u,v_1,v_2,v_3\}$. 
Consider $H:= G - u$, which has fewer vertices of degree at least $2$ than $G$.
The minimality of $G$ implies that $H$ has a $3\Delta$-coloring $f$.
By the construction of $H$, $f$ is a partial $3\Delta$-coloring of $G$.
Note that $|N^2_{G}[uv_i]|\leq 2\Delta +6 \leq 3\Delta$ for $i\in[3]$. 
Thus $(uv_1,uv_2,uv_3)$ is an $(f,3\Delta)$-degenerate sequence for $G$, and so $G$ is $(f,3\Delta)$-degenerate. 
So Lemma~\ref{lem: degenerate} implies that $\chi'_s(H)\leq 3\Delta$, a contradiction.
 This proves (iv). \newline

(v) By definition, a poor $3_{G^*}$-vertex and a $2_{G^*}$-vertex are pale. 
Since each neighbor of $w$ possibly except one is either a poor $3_{G^*}$-vertex or a $2_{G^*}$-vertex, (v) follows.
\end{proof}

\begin{claim}\label{cl: no two very poor}
No vertex is a sponsor of two distinct very poor vertices.
\end{claim}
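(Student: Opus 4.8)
The plan is to run the standard reduction for this section: pass to a smaller graph, invoke minimality, and recolor using a degenerate sequence, exactly as in Claim~\ref{cl: reducible 1}. Suppose for contradiction that some vertex $v$ is a sponsor of two distinct very poor vertices $u_1$ and $u_2$, with rivals $w_1$ and $w_2$ respectively. By Claim~\ref{cl: reducible 1}(iii), $d_{G^*}(v)=\Delta$ and hence $d_G(v)=\Delta$; by Claim~\ref{cl: reducible 1}(i), $d_G(u_i)=d_{G^*}(u_i)=2$ and $d_G(w_i)=d_{G^*}(w_i)$ for $i\in\{1,2\}$. As observed in the proof of Claim~\ref{cl: reducible 1}(iii), in each of the cases (T1)--(T3) the set $\Gamma_G(w_i)\setminus(S(G)\cup S'(G))$ contains at most one edge; denote it $e_i'$ when it exists.

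I would set $H:=G-u_1w_1-u_2w_2$. Since $d_G(u_i)=2$, both $u_1$ and $u_2$ become leaves of $H$ adjacent only to $v$, so $H$ has strictly fewer $2^+$-vertices than $G$; also $\Mad(H)\le\Mad(G)\le 3$ and $H$ has no $3$-regular subgraph, since deleting edges cannot increase the maximum average degree or create a $3$-regular subgraph. Hence by the minimality of $G$ the graph $H$ has a $3\Delta$-coloring $f$. As $u_1$ and $u_2$ are leaves of $H$ with the common neighbor $v$, transposing $u_1$ and $u_2$ is an automorphism of $H$, so we may swap $f(vu_1)$ and $f(vu_2)$ and still have a valid coloring of $H$; this gives the freedom to prescribe the ordered pair $(f(vu_1),f(vu_2))$ to be either of the two orderings of a fixed pair of distinct colors.

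The key step is to use this freedom to arrange $f(vu_1)\ne f(e_1')$ and $f(vu_2)\ne f(e_2')$. Because $f(vu_1)\ne f(vu_2)$, at most one of them equals any prescribed color, so one of the two orderings works unless both $e_1'$ and $e_2'$ exist and $f(e_1')=f(e_2')$ lies in $\{f(vu_1),f(vu_2)\}$; in that residual case I would first recolor $e_1'$ (or $e_2'$) away from both $f(vu_1)$ and $f(vu_2)$, which is immediate when this edge has few edges within distance $2$ in $H$ (for instance when $w_1\ne w_2$ and $e_1'=w_1w_2$, using $d_G(w_i)\le 4$), and requires a short separate argument when $w_1=w_2$, exploiting the pendant structure the common rival acquires in $H$. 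After these adjustments, restrict $f$ to $E(G)\setminus(S'(G)\cup S(G)\cup B(G))$, and keep calling it $f$.

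Then $f$ is a partial $3\Delta$-coloring of $G$: restoring $u_1w_1$ and $u_2w_2$ to $H$ can put two colored edges within distance $2$ in $G$ only if one of them is incident to some $u_i$ or $w_i$; but $u_iw_i$ is uncolored and every edge at $w_i$ other than $e_i'$ is a lower link, a semi-link, or a sink, hence uncolored, so the only such pairs are $\{vu_i,e_i'\}$, which now receive distinct colors. Since $u_1w_1$ and $u_2w_2$ are lower links, they lie in $S(G)$; so by Claim~\ref{cl: reducible 1}(ii) the triple $(S'(G),S(G),B(G))$ is an $(f,3\Delta)$-degenerate sequence for $G$, whence $G$ is $(f,3\Delta)$-degenerate and Lemma~\ref{lem: degenerate} gives $\chi'_s(G)\le 3\Delta$, contradicting the choice of $G$. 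I expect the main obstacle to be precisely the residual adjustment case, and in particular $w_1=w_2$: there $f(vu_1)$ and $f(vu_2)$ would both be forced to match the color of a single edge $e'$ whose second neighborhood is too large to recolor greedily, so one needs a genuinely separate (and more delicate) argument to free up a color there.
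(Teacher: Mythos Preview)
Your reduction $H=G-u_1w_1-u_2w_2$ is attractive because it is a genuine subgraph, so the hypotheses $\Mad\le 3$ and ``no $3$-regular subgraph'' are automatic. However, the residual case you flag is a real gap, and it is not confined to $w_1=w_2$.

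Suppose $w_1\neq w_2$, both $e_1'=w_1x_1$ and $e_2'=w_2x_2$ exist with $e_1'\neq e_2'$, and $f(e_1')=f(e_2')=f(vu_1)$. (This is consistent with $f$ being a strong edge-coloring of $H$: nothing prevents $e_1'$ and $e_2'$ from being far apart in $H$.) You now need to recolor one of $e_1',e_2',vu_1,vu_2$. But $v$ is a $\Delta_{G^*}$-vertex with no $1_G$-neighbors (since $d_G(v)=d_{G^*}(v)=\Delta$), so the only pendant edges at $v$ in $H$ are $vu_1$ and $vu_2$; swapping them does not help, and $|N^2_H[vu_i]|$ can be of order $\Delta^2$. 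Likewise $x_i$ may be a $\Delta$-vertex with $\Delta$-neighbors, so $|N^2_H[e_i']|$ can be far larger than $3\Delta$; there is no greedy recoloring available. Your parenthetical handles only the sub-case $e_1'=e_2'=w_1w_2$, and the ``pendant structure'' at a common rival $w$ gives at most one edge to permute when $d_{G^*}(w)=3$, so there is nothing to swap there either.

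This is exactly the obstruction the paper's proof is built to defeat. Rather than merely deleting edges, the paper \emph{adds} an edge to the auxiliary graph (either $vw$ in $H_1$ or $ww'$ in $H_2$), which forces the needed inequalities $f(vu_i)\neq f(e_i')$ for free via adjacency in the auxiliary graph. The price is that the modified graph need not satisfy $\Mad\le 3$ or ``no $3$-regular subgraph''; the potential function $\rho_G(A)=3|A|-2|E(G[A])|$ and Lemma~\ref{lem: super-modular} are then used to show that if both $H_1$ and $H_2$ fail, the resulting low-potential sets combine to give $\rho_G<0$ on some set in $G$, a contradiction. That machinery is precisely what substitutes for the recoloring step your argument cannot complete.
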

\begin{proof}
Assume that a vertex $v$ is a sponsor of  distinct very poor vertices $u$ and $u'$. 
By Claim~\ref{cl: basic reducible}(iii), both $u$ and $u'$ are $2_{G}$-vertices. 
Let $N_{G^*}(u)=\{v,w\}$ and  $N_{G^*}(u')=\{v,w'\}$.
By Claim~\ref{cl: reducible 1}(iii), $v$ is a $\Delta_{G^*}$-vertex. 
 \newline

\noindent {\bf\em Case 1.} $vw\in E(G)$ (this includes the case $w=u'$).
Consider the graph $H:= G- uw$. Since $H$ is a proper subgraph of $G$, 
it satisfies the conditions of Theorem~\ref{lem: 3Delta} and  contains  fewer $2^+$-vertices than $G$. 
So by the minimality of $G$, the graph $H$ has a $3\Delta$-coloring $f$. Since $vw\in E(G)$,
 $f$ is a partial $3\Delta$-coloring of $G$, where only $uw$ is not colored. By the definition
 of very poor vertices, $d_{G^*}(w)\leq 4$ and $v$ is the only possible neighbor of $w$ that is neither poor nor very poor.
Hence by Claims~\ref{cl: basic reducible} and~\ref{cl: reducible 1}(i),  
 $$ |N^2_{G}[uw]|\leq 1+d_G(v)+\sum_{x\in N_G(w)-\{v,u\}}d_G(x)\leq 1+\Delta+2(3)<3\Delta.
 $$
Thus we can extend $f$ to $uw$,  a contradiction. \newline

\noindent {\bf\em Case 2.} $vw\notin E(G)$ and $w=w'$.
Consider the graph $H$ obtained from $G-u$ by deleting the edge $u'w$  and adding the edge $vw$. 
Then $H$ has fewer $2^+$-vertices than $G$. Suppose $V(H)$ contains a set $A$ with $\rho_{H}(A)<0$. 
Since $\rho_G(A)\geq 0$, $wv \in E(H[A])$, so $w, v\in V(H)$.
Also we may assume that $u'\notin A$, since $\rho_{H}(A-u') \leq \rho_{H}(A)$. 
However, since each of $u$ and $u'$ is adjacent to each of $v$ and $w$, the graph $G[A\cup \{u,u'\}]$ has $4$ more edges than $G[A]$. So
\begin{align*}
\rho_{G}(A\cup \{u,u'\}) &= \rho_{G}(A)+2(3)-4(2)= (\rho_{H}(A)+2) +6-8=\rho_H(A)< 0,
\end{align*}
  a contradiction. Thus $\rho_{H}(B)\geq 0$ for any $B\subseteq V(H)$.
Similarly, if $H$ contains a $3$-regular subgraph $H'$, then $H'$  contains both $v$ and $w$.
This means $w$ has two neighbors in $G$ that are not pale. This contradicts
  Claim~\ref{cl: reducible 1}~(v).
Thus Lemma~\ref{lem: 3-reg light} implies that $H$ has no $3$-regular subgraphs  containing $w$.
Hence  $H$  has no $3$-regular subgraphs at all.
So $H$  satisfies the conditions of Theorem~\ref{lem: 3Delta} and by the minimality of $G$, $H$ has a $3\Delta$-coloring $f'$.
 Let 
$$f(e) := \left\{ \begin{array}{ll}
f'(e) & \text{if } e\in E(G)\cap E(H), \\
f'(vw) & \text{if } e= uv.
\end{array}\right.$$
Since $vw\in E(H)$, colors $f(uv)$ and $f(u'v)$ are disjoint from $f(\Gamma_{G}(w))$. So $f\vert_{E(G)}$ is a partial $3\Delta$-coloring of $G$ 
and 
the only non-colored edges  are $wu$ and  $wu'$.
Similarly to the end of the proof of Case 1, $d_{G^*}(w)\leq 4$ and at most one  neighbor of $w$,   is neither poor nor very poor.
Hence by Claims~\ref{cl: basic reducible} and~\ref{cl: reducible 1}(i),  for $y\in \{u,u'\}$
 $$ |N^2_{G}[yw]|\leq 2+d_G(v)+\sum_{x\in N_G(w)-\{u',u\}}d_G(x)\leq 2+\Delta+\Delta +3<3\Delta.
 $$
Thus we can extend $f$ to $uw$ and $u'w$,  a contradiction. \newline

\noindent {\bf\em Case 3.}  $|\{u,u',w, w'\}|=4$ and $vw,vw'\notin E(G)$.
Since every rival of a very poor vertex is incident to at most one edge that is not in $S(G)\cup S'(G)$, let $e'$ be the unique edge incident to $w'$ such that $e'\notin S(G)\cup S'(G)$, if it exists. 
Similarly to Case 2, consider the graph $H_1$ obtained from $G-u$ by deleting the edge $u'w'$  and adding the edge $vw$. 

If $H_1$ has a $3\Delta$-coloring $f_1$, then we may assume that $\{f_1(vw),f_1(vu')\}=\{\alpha,\beta\}$ with $\alpha\neq \beta$ 
and $f_1(e')\neq \beta$.
Let 
$$f(e) := \left\{ \begin{array}{ll} 
f_1(e) &\text{if } e\in E(G) \setminus (S(G)\cup S'(G) \cup B(G) \cup \{uv,vu'\} ) \\
\alpha &\text{if } e=uv \\
\beta &\text{if } e=vu'
\end{array} \right.$$
Then $e'$ is the only edge in $\Gamma_{H_1}(w')$ that is colored by $f$. Also  $\alpha,\beta \notin f(\Gamma_{H_1}(w))$, since
(due to the edge $vw$) every edge in $\Gamma_{H_1}(w)$ is distance at most one from $vu'$ and $vw$ in $H_1$. 
Thus $f$ is a partial $3\Delta$-coloring of $G$, and the uncolored edges are exactly the edges in $S(G)\cup S'(G)\cup B(G)$. (Note that $wu,w'u'\in S(G)$.) 
Hence
Claim~\ref{cl: reducible 1}~(ii) implies that $(S'(G),S(G),B(G))$ is an $(f,3\Delta)$-degenerate sequence for $G$; thus $G$ is $(f,3\Delta)$-degenerate, a contradiction.

Since $H_1$ has fewer $2^+$-vertices than $G$, this means that $H_1$ does not satisfy the conditions of our theorem. This means that
 either there exists a set $A\subseteq V(H_1)$ with $\rho_{H_1}(A)<0$ or  $H_1$ has  a $3$-regular subgraph $H'_1$. 
If the latter holds, then $H'_1$ must contain the edge $wv$ since $G$ has no $3$-regular subgraphs. Then the neighbors of $w$ in $H'_1-v$
are not pale in $H_1$ and hence in $G$, a contradiction to  Claim~\ref{cl: reducible 1}~(v). 
We conclude that there exists a set $A_1\subseteq V(H_1)$ with $\rho_{H_1}(A_1)<0$. 
Since $\rho_{G}(A_1)\geq 0$, we know that $w,v \in A_1$ and $u'\notin A_1$. Then 
$$\rho_{G}(A_1 \cup \{u\}) = \rho_{G}(A_1) +3 - 2\cdot 2 = \rho_{H_1}(A_1)+2 + 3 - 4 = \rho_{H_1}(A_1)+1\geq 0.$$
It follows that $\rho_{H_1}(A_1) = -1$,   $\rho_{G}(A_1\cup\{u\})=\rho_{H_1}(A_1)+1=0$ and 
$\rho_{G}(A_1) =\rho_{G}(A_1\cup\{u\}) -3+2(2)=1$. Also $w'\notin A_1$, since otherwise we have $\rho_{G}(A_1\cup \{u,u'\}) = \rho_{G}(A_1) + 2\cdot 3 - 4\cdot 2 = -1$, a contradiction.

By symmetric argument, we can also find a set $A'_1$ such that $w',v\in A'_1$, $u,w\notin A'_1$, $\rho_{G}(A'_1\cup\{u'\})=0$ and  $\rho_{G}(A'_1)=1$. 
Then by Lemma~\ref{lem: super-modular},
$$\rho_G((A_1\cup\{u\})\cap (A'_1\cup\{u'\}))+\rho_G((A_1\cup\{u\})\cup (A'_1\cup\{u'\}))\leq \rho_G(A_1\cup\{u\})+\rho_G (A'_1\cup\{u'\})=0+0=0.$$
Since $(A_1\cup\{u\})\cap (A'_1\cup\{u'\})=A_1\cap A'_1$, we conclude that
\begin{equation}\label{s2}
\rho_G(A_1\cap A'_1)=0.
\end{equation}
If $ww'\in E(G)$, then by Lemma~\ref{lem: super-modular}, 
\begin{align*}
\rho_G(A_1\cup A'_1\cup\{u,u'\})&= \rho_{G}(A_1\cup\{u\})+\rho_{G}(A'_1\cup\{u'\})-\rho_{G}((A_1\cup\{u\})\cap (A'_1\cup\{u'\}))\\
&\enspace \enspace -2|E_G(\{u\}\cup A_1\setminus A'_1,\{u'\}\cup A'_1\setminus A_1)| \\
&\leq 0+0-0-2(1)=-2,
\end{align*}
a contradiction. Thus $ww'\notin E(G)$.

Now we consider the graph $H_2$ obtained from $G - \{wu, w'u'\}$ by adding the edge $ww'$. 
Recall that $e'$ is the unique edge incident to $w'$ such that $e' \notin S(G)\cup S'(G)$, if it exists. 
Let $e''$ be the unique edge incident to $w$ such that $e''\notin S(G)\cup S'(G)$, if it exists. 

Assume  $H_2$ has  a $3\Delta$-coloring $f_2$.
Then $f_2(uv)\neq f_2(u'v)$. Since $e'$ and $e''$ are distance one from each other in $H_2$, $f_2(e')\neq f_2(e'')$. 
We may assume that $f_2(e')\neq f_2(u'v)$ and $f_2(e'')\neq f_2(uv)$ by switching the colors of $uv$ and $u'v$ if necessary. 

Then we let 
$$f(e) = f_2(e) \text{ for } e\in E(G)\setminus (S(G)\cup S'(G)\cup B(G)).$$
The only edge in $N^2_{G}(uv)$ incident to $w$ and colored in $f_2$ is $e''$, and the only edge in $N^2_{G}(u'v)$ incident to $w'$ colored in $f_2$ is $e'$. 
Since $f_2(e'')\neq f_2(uv)$ and $f_2(e')\neq f_2(u'v)$, $f$ is a partial $3\Delta$-coloring of $G$.
Now Claim~\ref{cl: reducible 1}~(ii) implies that $(S'(G),S(G),B(G))$ is an $(f,3\Delta)$-degenerate sequence for $G$. Thus $G$ is $(f,3\Delta)$-degenerate, a contradiction. So $H_2$ must not have a $3\Delta$-coloring. 

Since $H_2$ contains fewer $2^+$-vertices than $G$, this means that $H_2$ does not satisfy the conditions of our theorem. 
So either there exists a set $A_2\subseteq V(H_2)$ with $\rho_{H_2}(A_2)<0$ or  $H_2$
has a $3$-regular subgraph $H'_2$. In the latter case,  $H'_2$ must contain the edge $ww'$, since $G$ contains no $3$-regular subgraphs. 
Then the neighbors of $w$ in $H'_2-w'$
are not pale in $H_2$ and hence in $G$, a contradiction to  Claim~\ref{cl: reducible 1}~(v). 
We conclude that there exists a set $A_2\subseteq V(H_2)$ with $\rho_{H_2}(A_2)<0$. 
Since $\rho_{G}(A_2)\geq 0$, we know that $w,w' \in A_2$. We may also assume that $u,u'\notin A_2$ since $\rho_{H_2}(A_2\setminus\{u,u'\})\leq \rho_{H_2}(A_2)$. Then $\rho_{G}(A_2)  = \rho_{H_2}(A_2)+2  \leq 1$.  Also $v\notin A_2$ since otherwise $\rho_{G}(A_2\cup \{u,u'\}) = \rho_{G}(A_2)+ 2\cdot 3 - 4\cdot 2 \leq -1$, a contradiction.
Thus we have a set $A_2$ with $\rho_{H_2}(A_2)\leq 1$, $w,w'\in A_2$ and $u,v,u'\notin A_2$.

Since $\{v,w,w'\}\subseteq A_1\cup A_2$ and $\{u,u'\}\cap (A_1\cup A_2)=\emptyset$,
$\rho_G(A_1\cup A_2\cup \{u,u'\})=\rho_G(A_1\cup A_2)+2(3)-4(2)=\rho_G(A_1\cup A_2)-2$. It follows that $\rho_G(A_1\cup A_2)\geq 2$.
So by
 Lemma~\ref{lem: super-modular}, 
$$\rho_{G}(A_1\cap A_2) =\rho_{G}(A_1)+\rho_{G}(A_2) - \rho_{G}(A_1\cup A_2)\leq 1 + 1 - 2 = 0.$$
Then by~\eqref{s2} and again
 by Lemma~\ref{lem: super-modular} 
$$\rho_{G}((A_1\cap A_1') \cup (A_1\cap A_2)) \leq \rho_{G}(A_1\cap A_1') + \rho_{G}(A_1\cap A_2)  =  0 + 0 =0.$$
Yet, $(A_1\cap A_1') \cup (A_1\cap A_2)$ contains $v$ and $w$ and does not contain $u$. So
$$\rho_{G}( (A_1\cap A_1') \cup (A_1\cap A_2) \cup \{u\}) = \rho_{G}((A_1\cap A'_1) \cup (A_1\cap A_2)) + 3 - 2( 2) = -1,$$
 a contradiction to the choice of $G$. This proves the lemma.
\end{proof}

\begin{claim}\label{cl: reducible 2}
Let $v\in V(G^*)$.
\begin{enumerate}[label=(\roman*)]
\item If $d_{G^*}(v)< \Delta$, then at least one vertex in $N_{G^*}(v)$ is neither poor nor very poor.
\item If $d_{G^*}(v)=\Delta$, then either at least one vertex in $N_{G^*}(v)$ is neither poor nor very poor or $v$ is not a sponsor of a very poor vertex.
\end{enumerate}
\end{claim}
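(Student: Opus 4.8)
The plan is to prove both parts by contradiction, using the minimality of the counterexample $G$. Suppose some $v\in V(G^*)$ violates the claim; then in either case every vertex of $N_{G^*}(v)$ is poor or very poor. The first step is to record, via Claim~\ref{cl: basic reducible}, that a poor or very poor $2_{G^*}$-vertex has degree $2$ in $G$ (part~(iii)) while a poor $3_{G^*}$-vertex has degree $3$ in $G$ (part~(iv)); hence every $u\in N_{G^*}(v)$ has $d_G(u)=d_{G^*}(u)\le 3$, so $\sum_{u\in N_{G^*}(v)}d_G(u)\le 3d_{G^*}(v)\le 2\Delta+d_{G^*}(v)$, and Claim~\ref{cl: basic reducible}(i) yields $d_G(v)=d_{G^*}(v)$.

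For part~(i), assume $d_{G^*}(v)<\Delta$. If $d_{G^*}(v)=2$ this is immediate: every edge $vu$ has $|N_G^2[vu]|\le\Delta+8\le 3\Delta$, so a $3\Delta$-coloring of $G-v$ extends one edge of $\Gamma_G(v)$ at a time. So assume $3\le d_{G^*}(v)\le\Delta-1$ and let $H:=G-v$. Then $H$ inherits $\Mad(H)\le 3$, has no $3$-regular subgraph and $\Delta(H)\le\Delta$, and has fewer $2^+$-vertices than $G$, so by minimality $H$ has a $3\Delta$-coloring $g$; let $f$ be its restriction to $E(G)\setminus(\Gamma_G(v)\cup B(G)\cup S(G)\cup S'(G))$. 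Since no color conflict of $G$ among edges in the domain of $f$ can be routed through $v$, $f$ is a partial $3\Delta$-coloring of $G$. I would then check that listing the uncolored edges as \emph{(the edges of $\Gamma_G(v)\setminus(B(G)\cup S(G)\cup S'(G))$, one at a time in any order; then $S'(G)$; then $S(G)$; then $B(G)$)} is an $(f,3\Delta)$-degenerate sequence. The last three blocks are handled verbatim by Claim~\ref{cl: reducible 1}(ii). For a first-block edge $vu_i$ the decisive observation is that every poor $3_{G^*}$-neighbor $z$ of $v$ has a $2_{G^*}$-neighbor $a_z\ne v$ (since $d_{G^*}(v)\ge 3$), and then $za_z$ is the $z$-sink, because $v\in N_{G^*}(z)\setminus\{a_z\}$ satisfies $d_G(v)=d_{G^*}(v)<\Delta$; thus $za_z\in B(G)$ is deleted. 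Consequently each neighbor of $v$ contributes at most one ``outer'' edge to $N_G^2[vu_i]$ after the later blocks are removed, and $|N_G^2[vu_i]\setminus(\text{later edges})|\le d_{G^*}(v)+d_{G^*}(v)+\Delta\le 3\Delta-2$. Lemma~\ref{lem: degenerate} then forces $\chi_s'(G)\le 3\Delta$, a contradiction.

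For part~(ii), assume $d_{G^*}(v)=\Delta$ and $v$ is a sponsor of a very poor vertex $u$ with rival $w$, so $N_{G^*}(u)=\{v,w\}$. Since $\Delta\ge 7$, the vertex $v$ is not a $2_{G^*}$-, $3_{G^*}$-, or $4_{G^*}$-vertex, so $\Gamma_G(v)\cap(B(G)\cup S(G)\cup S'(G))=\emptyset$. Let $H:=G-vu$; then $d_G(u)$ drops from $2$ to $1$, so $H$ has fewer $2^+$-vertices than $G$ while inheriting the hypotheses, and by minimality has a $3\Delta$-coloring $g$; let $f$ be its restriction to $E(G)\setminus(\{vu\}\cup B(G)\cup S(G)\cup S'(G))$. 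The only edges at $u$ are $vu$ (undefined) and $uw$, which is the lower link of $u$ and hence lies in $S(G)$ (also undefined), so $f$ is a partial $3\Delta$-coloring of $G$. I would then verify that $(\{vu\},\,S'(G),\,S(G),\,B(G))$ is an $(f,3\Delta)$-degenerate sequence. For the block $\{vu\}$ the key point is that, in each of the cases (T1)--(T3), every neighbor of $w$ except at most one is a $2_{G^*}$-vertex or a poor $3_{G^*}$-vertex, and each such vertex is joined to $w$ by a lower link or a semi-link --- here one uses Claim~\ref{cl: reducible 1}(iii) (with Claim~\ref{cl: no two very poor}) to see that $w$, not being a $\Delta_{G^*}$-vertex, must be the rival rather than the sponsor of its very poor neighbors --- so $|\Gamma_G(w)\setminus(S'(G)\cup S(G)\cup B(G))|\le 1$. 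Since $\bigcup_{z\in N_G(v)}\Gamma_G(z)$ contributes its $\Delta$ edges at $v$, at most $2$ outer edges per neighbor of $v$ distinct from $u$, and none from $u$ (as $uw\in S(G)$), this gives $|N_G^2[vu]\setminus(S'(G)\cup S(G)\cup B(G))|\le\Delta+2(\Delta-1)+1=3\Delta-1$. The remaining blocks are again covered by Claim~\ref{cl: reducible 1}(ii), so Lemma~\ref{lem: degenerate} yields the contradiction.

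The hard part is the tightness of these two degenerate-sequence verifications: in both parts the naive bound on $|N_G^2[e]|$ for the one newly-uncolored edge $e$ exceeds $3\Delta$, and the argument only closes once one identifies precisely which nearby edges are forced into $B(G)$, $S(G)$, or $S'(G)$ (and so get recolored by the degenerate sequence rather than blocking the extension): the sink-edges of $v$'s poor $3_{G^*}$-neighbors in part~(i), and nearly all edges at the rival $w$ in part~(ii). A minor but genuinely needed point is to confirm that deleting $v$ (respectively the single edge $vu$) destroys no color conflict among the edges left colored, which holds exactly because the only ``newly adjacent'' edges are among those we leave uncolored.
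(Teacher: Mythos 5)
Your argument is correct, and it splits into two halves of different character. Part (i) is essentially the paper's own proof: delete $v$, note that each poor $3_{G^*}$-neighbor of $v$ is incident to a sink because $d_G(v)=d_{G^*}(v)<\Delta$, and recolor $\Gamma_G(v)$ followed by the uncolored link/sink classes; the only cosmetic difference is that you also uncolor $S(G)\cup S'(G)$, which is harmless since Claim~\ref{cl: reducible 1}(ii) covers the tail $(S'(G),S(G),B(G))$ in exactly that order. (Two of your stated counts are off as written but do not affect the conclusion: in the $d_{G^*}(v)=2$ case the relevant bound is roughly $2\Delta+4$ rather than $\Delta+8$, since the two neighbors of $u$ other than $v$ may each have degree $\Delta$; and for a poor $3_{G^*}$-neighbor $u_i$ one must also count the second edge at its $2_{G^*}$-neighbor $u_i''$, giving $2d_{G^*}(v)+\Delta+1\le 3\Delta-1$. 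Both still clear $3\Delta$.) Part (ii), by contrast, takes a genuinely different and shorter route. The paper deletes the whole vertex $v$ and must then thread all of $\Gamma_G(v)$ together with the edges $e_1,\dots,e_t$ through the long sequence $(vu_1,\dots,vu_{\Delta-1},e_1,\dots,e_t,vu_\Delta,S'(G),S(G),B'(G))$, verifying a separate inequality for each block. You delete only the single edge from $v$ to its very poor neighbor --- which already drops the number of $2^+$-vertices, so minimality applies --- and then need to recolor only that one edge ahead of the standard tail, using the fact (already isolated in the paper's proof of Claim~\ref{cl: reducible 1}(iii)) that all but at most one edge at the rival $w$ lies in $S(G)\cup S'(G)$; this gives $|N_G^2[vu]\setminus(S'(G)\cup S(G)\cup B(G))|\le \Delta+2(\Delta-1)+1=3\Delta-1$, and your check that the only conflicts destroyed by deleting $vu$ involve an edge at $u$ (both of which are uncolored) is the right and necessary one. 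The payoff is that the paper's case analysis for $vu_1,\dots,vu_{\Delta-1}$ and $e_1,\dots,e_t$ becomes unnecessary.
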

\begin{proof}
Let $d_{G^*}(v)=s$ and $N_{G^*}(v) =\{u_1,\dots, u_s\}$ where each $u_i$ is either poor or very poor. 
Further assume that $u_1,\dots, u_t$ ($t\leq s$) are the poor $3_{G^*}$-vertices in $N_{G^*}(v)$.
By  Claim~\ref{cl: basic reducible}, 
\begin{equation}\label{s3}
\mbox{\em $d_{G^*}(x)=d_G(x)$ for each $x\in N_{G^*}[v]$.}
\end{equation}
In particular, $d_{G^*}(v)=d_G(v)=s$.
For $i\in [t]$, since $u_i$ is a poor $3_{G^*}$-vertex, let $N_G(u_i)=\{v, u'_i,u''_i\}$, where $u''_i$ is the unique 
$2_{G^*}$-vertex in $N_{G}(u_i)$, $N_G(u''_i)=\{u_i,x_i\}$,  and let $e_i:= u_iu''_i$. 
Note that $e_i$ may or may not be a sink.
For $i\in[s]\setminus[t]$, we know $d(u_i)=2$, so let $N_{G}(u_i) = \{v,u'_i\}$. 
\newline

(i) Suppose $s < \Delta$. Recall that $d_{G}(v)=s < \Delta$.
For each $i\in[t]$, the   poor $3_{G^*}$-vertex $u_i$ is adjacent to the $(\Delta-1)^{-}_{G}$-vertex $v$ and hence  is incident to exactly one sink $e_i$.  

Let  $H := G-v$.  By the minimality of $G$, the graph $H$ has a $3\Delta$-coloring $f'$.
Let 
$$f(e) := f'(e) \text{ for } e\in E(G) \setminus B(G).$$
Then $f$ is a partial $3\Delta$-coloring of $G$.
Consider the ordering $(vu_1,\dots, vu_s, B(H))$ of the edges not colored by $f$.
First, since $d_G(v)\leq \Delta-1$, for each $i\in [t]$,
\begin{align*}
|N^2_{G}[vu_i]\setminus (B(G)\cup\{vu_{i+1},\dots, vu_s\})|&\leq d_{G}(u'_i)+ d_{G}(u''_i) + \sum_{j\neq i}|N_{G}(u_j)\setminus B(G)|+|\{vu_i\}|\\ &\leq \Delta + 2 + 2(d_{G}(v)-1) +1\leq 3\Delta-1.
\end{align*}
Similarly, if $t<i\leq s$, then
\begin{align*}
|N^2_{G}[vu_i]\setminus (B(G)\cup\{vu_{i+1},\dots, vu_s\})|&\leq  d_{G}(u'_i) + \sum_{j\neq i}|N_{G}(u_j)\setminus B(G)|+|\{vu_i\}|\\ 
&\leq  \Delta + 2(d_{G}(v)-1) +1\leq 3\Delta-3.
\end{align*}
Claim~\ref{cl: reducible 1}~(ii) implies that $|N^2_{G}[e]| \leq 3\Delta$ for each $e\in B(G)$.
Therefore, 
$(vu_1,\dots, vu_s, B(G))$ is an $(f,3\Delta)$-degenerate sequence for $G$.
Hence $G$ is $(f,3\Delta)$-degenerate,  a contradiction to Lemma~\ref{lem: degenerate} and the assumption that $G$ does not have a $3\Delta$-coloring. 
\newline

(ii) Assume  
$s=\Delta$ and $u_\Delta$ is very poor, so that $t<s$.
Let $u'$ be the rival of $u_\Delta$ and let $e'$ be the unique edge incident to $u'$ where $e'\notin S(G)\cup S'(G)$, if it exists. 

Consider $H := G-v$. By the minimality of $G$, graph $H$ has a $3\Delta$-coloring $f'$.
Let 
$$f(e) := f'(e) \text{ for } e\in E(G) \setminus (\{e_1,\dots, e_t\}\cup S(G)\cup S'(G)\cup B(G)).$$
Then $f$ is a partial $3\Delta$-coloring of $G$ since $N^2_{H}[e] = N^2_{G}[e]\setminus \Gamma_G(v)$ for each $e\in E(H)$.
Let $B'(G) := B(G)\setminus\{e_1,\dots, e_t\}$ and $P(G) := S'(G)\cup S(G)\cup B'(G).$ 

Consider the sequence $(vu_1,\dots, vu_{\Delta-1}, e_1,\dots, e_t, vu_\Delta, S'(G), S(G),B'(G))$ of edges not colored by $f$.
We want to show that it  is an $(f,3\Delta)$-degenerate sequence for $G$.
Note that no edge incident to $u_\Delta$ is  colored by $f$.
First, for $i\in[t]$,
\begin{align*}
|N^2_{G}[vu_i]\setminus (\{e_1,\dots, e_t,vu_\Delta\}\cup P(G)) |&\leq d_{G}(u'_i)+ (d_{G}(u''_i)-1) + 
\sum_{j\not\in \{i,\Delta\}}|N_{G}(u_j)\setminus\{e_j\}|+|\{vu_i\}|\\ &\leq \Delta + 1 + 2(\Delta-2) +1\leq 3\Delta-2.
\end{align*}
If $t<i\leq \Delta-1$, then
\begin{align*}
|N^2_{G}[vu_i]\setminus (\{e_1,\dots, e_t,vu_\Delta\}\cup P(G))|&\leq  d_{G}(u'_i) + \sum_{j\not\in \{i,\Delta\}}|N_{G}(u_j)
\setminus\{e_1,\dots, e_t\}|+|\{vu_i\}|\\ &\leq  \Delta + 2(\Delta-2) +1\leq 3\Delta-3.
\end{align*}
For $i\in [t]$,
\begin{align*}
|N^2_{G}[e_i]\setminus (\{vu_\Delta\}\cup P(G))|&\leq d_{G}(x_{i})+ d_{G}(u'_{i}) + |N_{G}(v)\setminus \{vu_\Delta\}| + |\{e_i\}| \\ 
&\leq \Delta +\Delta +  \Delta -1 + 1 \leq 3\Delta.
\end{align*}
Also 
\begin{align*}
|N^2_{G}[vu_\Delta]\setminus P(G)|&\leq \sum_{i=1}^{\Delta-1} d_{G}(u_i) + |\{e'\}| + |\{vu_\Delta\}| \leq 3(\Delta-1) + 2  \leq 3\Delta-1.
\end{align*}
These inequalities together with Claim~\ref{cl: reducible 1}~(iii) imply that the  sequence \\
$(vu_1,\dots, vu_{\Delta-1}, e_1,\dots, e_t, vu, S'(G),S(G),B'(G))$ is  an $(f,3\Delta)$-degenerate sequence for $G$. So, $G$ is $(f,3\Delta)$-degenerate,  a contradiction to Lemma~\ref{lem: degenerate}
 and the choice of $G$. This proves the claim.
\end{proof}

\begin{claim}\label{cl: 4-vtx}
Suppose $v$ is a poor $3_{G^*}$-vertex and $N_{G^*}(v)=\{v_1,v_2,v_3\}$, where  $d_{G^*}(v_1)=4, d_{G^*}(v_2)=3$,
and  $d_{G^*}(v_3)=2$. Then $v_1$ has at least two neighbors in $G^*$ where each of them is  neither poor nor very poor. 
\end{claim}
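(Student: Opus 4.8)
The plan is a short reducibility argument by contradiction. Suppose, for contradiction, that $v_1$ has at most one neighbour in $G^*$ that is neither poor nor very poor. Since $d_{G^*}(v_1)=4$, a neighbour of $v_1$ is poor or very poor precisely when it is a $2_{G^*}$-vertex or a poor $3_{G^*}$-vertex (every $2_{G^*}$-vertex is poor or very poor by definition), and by Claim~\ref{cl: basic reducible}(iii)--(iv) any such vertex has $G$-degree at most $3$. As $v$ itself is a poor $3_{G^*}$-vertex and a neighbour of $v_1$, our assumption forces at least two of the other three neighbours of $v_1$ to have $G$-degree at most $3$. Consequently $\sum_{u\in N_{G^*}(v_1)}d_G(u)\le 3+3+3+\Delta\le 2\Delta+d_{G^*}(v_1)$ for $\Delta\ge 7$, so Claim~\ref{cl: basic reducible}(i) gives $d_G(v_1)=d_{G^*}(v_1)=4$, say $N_G(v_1)=\{v,a,b,c\}$ with $d_G(a),d_G(b)\le 3$ and $d_G(c)\le\Delta$. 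Also $d_G(v)=d_G(v_2)=3$ and $d_G(v_3)=2$ by Claim~\ref{cl: basic reducible}(iii)--(iv), so $\Gamma_G(v)=\{vv_1,vv_2,vv_3\}$.

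Next I would pass to $H:=G-v$. Since $v$ and $v_3$ are $2^+$-vertices of $G$ but $v$ disappears and $v_3$ becomes a $1$-vertex in $H$, the graph $H$ has fewer $2^+$-vertices than $G$; and $H$ inherits $\Mad(H)\le 3$ and the absence of a $3$-regular subgraph from $G$. By the minimality of $G$, the graph $H$ has a $3\Delta$-coloring $f$, and (since two edges of $E(H)$ lie at distance at most $2$ in $G$ only if they already do in $H$) $f$ is a partial $3\Delta$-coloring of $G$ whose set of uncoloured edges is exactly $\{vv_1,vv_2,vv_3\}$.

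Then I would check that $(vv_1,vv_2,vv_3)$ is an $(f,3\Delta)$-degenerate sequence for $G$ and invoke Lemma~\ref{lem: degenerate} to get $\chi'_s(G)\le 3\Delta$, contradicting the choice of $G$. Writing $N_G(v_2)=\{v,p,q\}$ and $N_G(v_3)=\{v,x_3\}$, a direct count of $N_G^2[\,\cdot\,]=\bigcup_w\Gamma_G(w)$ over the relevant vertices (coincidences among these vertices only shrink the counts) gives $|N_G^2[vv_3]|\le (3+4+3+2+d_G(x_3))-4\le\Delta+8$ and $|N_G^2[vv_2]|\le(3+4+3+2+d_G(p)+d_G(q))-5\le 2\Delta+7$, both at most $3\Delta$ for $\Delta\ge 7$. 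The decisive estimate is
\[
|N_G^2[vv_1]|\ \le\ \bigl(d_G(v)+d_G(v_1)+d_G(v_2)+d_G(v_3)+d_G(a)+d_G(b)+d_G(c)\bigr)-\bigl|\{vv_1,vv_2,vv_3,v_1a,v_1b,v_1c\}\bigr|\ =\ 6+d_G(a)+d_G(b)+d_G(c)\ \le\ \Delta+12\ \le\ 3\Delta ,
\]
where the bound $d_G(a)+d_G(b)+d_G(c)\le 3+3+\Delta$ is exactly where the assumed failure of the claim is used, and the last inequality holds since $\Delta\ge 7$. Hence the three defining inequalities of an $(f,3\Delta)$-degenerate sequence hold (with $B(G)$ not even needed), so $G$ is $(f,3\Delta)$-degenerate, the desired contradiction.

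The only delicate point is the estimate for $|N_G^2[vv_1]|$: if $v_1$ could have two neighbours of degree about $\Delta$ besides $v$, this quantity would exceed $3\Delta$, so the structural hypothesis (together with $\Delta\ge 7$) is genuinely needed there. Everything else is routine bookkeeping, entirely parallel to Claims~\ref{cl: basic reducible}--\ref{cl: reducible 2} and to the reducibility argument for poor $3_{G^*}$-vertices in Claim~\ref{cl: reducible 1}(iv).
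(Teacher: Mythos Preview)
Your proposal is correct and follows essentially the same approach as the paper's proof: assume for contradiction that three of the four $G^*$-neighbours of $v_1$ (including $v$) are poor or very poor, use Claim~\ref{cl: basic reducible} to pin down the $G$-degrees of $v,v_1,v_2,v_3$, delete $v$, colour $H=G-v$ by minimality, and extend along the $(f,3\Delta)$-degenerate sequence $(vv_1,vv_2,vv_3)$. Your inclusion--exclusion style bookkeeping for $|N_G^2[vv_i]|$ is a slight presentational variant of the paper's direct counts, but the key inequality $|N_G^2[vv_1]|\le \Delta+12\le 3\Delta$ is the same in substance and rests on the same structural hypothesis.
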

\begin{proof}
Suppose that under the conditions of the lemma, $N_{G^*}(v_1)=\{v,u_1,u_2,x\}$, where each of $u_1$ and $u_2$  is either poor or very poor. 
Let $N_{G^*}(v_2) = \{ v,y_1,y_2\}$ and $N_{G^*}(v_3)= \{v,z\}$.
Since $v,u_1$, and $u_2$ are all poor or very poor, their degrees in $G$ are the same as in $G^*$. 
Since $\Delta\geq 7$, 
 $$ 2\Delta+ d_{G^*}(v_1)=2\Delta + 4  > 10 + \Delta> 3+3+3+\Delta \geq d_{G}(v) + d_{G}(u_1) + d_{G}(u_2) + d_{G}(x).$$
 Thus Claim~\ref{cl: basic reducible}~(i) implies that $d_{G}(v_1)= d_{G^*}(v_1)=4$.
Also by Claim~\ref{cl: basic reducible}~(iv), $d_{G}(v_2)= d_{G^*}(v_2)=3$.

Consider $H:= G-v$. By the minimality of $G$, the graph $H$ has a $3\Delta$-coloring $f$.
By the construction of $H$, $f$ is a partial $3\Delta$-coloring of $G$.

Since $\Delta \geq 7$, 
\begin{align*}
|N^2_{G}[vv_1]\setminus\{vv_2,vv_3\}|&\leq d_{G}(x) + (d_{G}(v_2)-1)+ (d_{G}(v_3)-1)+ \sum_{i=1}^{2} d_{G}(u_i) +|\{vv_1\}| \\
&\leq \Delta + 2 + 1 + 2\cdot 3 +1 \leq 3\Delta,\\
|N^2_{G}[vv_2]\setminus\{vv_3\} |&\leq d_{G}(y_1) + d_{G}(y_2) + d_{G}(v_1)+ d_{G}(v_3)-1 + |\{vv_2\}| \\ &\leq 2\Delta + 4+1 + 1 \leq 3\Delta,\\
|N^2_{G}[vv_3]|&\leq d_{G}(z) + d_{G}(v_2)+ d_{G}(v_1) +|\{vv_3\}|\leq \Delta + 3+4+1 \leq 3\Delta.
\end{align*}
Thus $(vv_1,vv_2,vv_3)$ is an $(f,3\Delta)$-degenerate sequence for $G$. So $G$ is $(f,3\Delta)$-degenerate,  a contradiction to the 
choice of $G$.
\end{proof}

\subsection{Discharging}

 Since $G^*$ is a subgraph of $G$, we know $\Mad(G^*)\leq 3$ and $G^*$ does not contain $3$-regular subgraphs.
For every $v\in V(G^*)$, define {\em the initial charge} $\ch(v):= d_{G^*}(v)$. Since $\Mad(G^*)\leq 3$, we have
$
\sum_{v\in V(G^*)}\ch(v) \leq 3|V(G^*)|.
$
 We will move the charge among vertices without changing the total sum of charge according  to the discharging rules below.

\begin{itemize}
\item[(R1)] If a $\Delta_{G^*}$-vertex $v$ is a sponsor of a very poor vertex $u$, then $v$ gives charge $1$  to $u$. 

\item[(R2)] If $5\leq d_{G^*}(v)\leq \Delta$ and $u$ is a poor vertex in $N_{G^*}(v)$, then $v$ gives charge $1/2$  to $u$.

\item[(R3)] If a $4_{G^*}$-vertex $v$ is adjacent to a poor $2_{G^*}$-vertex $u$, then $v$ gives charge $1/2$  to $u$.

\item[(R4)] If a $4_{G^*}$-vertex $v$ is adjacent to a poor $3_{G^*}$-vertex $u$, then we do one of the following:
\begin{itemize}
\item[(R4A)] If $N_{G^*}(v)$ contain at least two vertices that are neither poor nor very poor, then $v$ gives charge $1/2$  to $u$.
\item[(R4B)] Otherwise, $v$ gives charge $1/4$  to $u$.
\end{itemize}

\item[(R5)] If a poor $3_{G^*}$-vertex $v$ is adjacent to a poor $2_{G^*}$-vertex $u$, then $v$ gives charge $1/2$  to $u$.
\end{itemize}
For every $v\in V(G^*)$, let $\ch^*(v)$ be the {\em final charge} of $v$, which is the charge of $v$ after the distribution. Since the total sum of charge did not change,
\begin{equation}\label{s5}
\sum_{v\in V(G^*)}\ch^*(v)= \sum_{v\in V(G^*)}\ch(v) \leq 3|V(G^*)|.
\end{equation}

See Figure~\ref{fig:rules} for an illustration of the discharging rules. 

\begin{figure}[h]
	\begin{center}
  \includegraphics[scale=0.725]{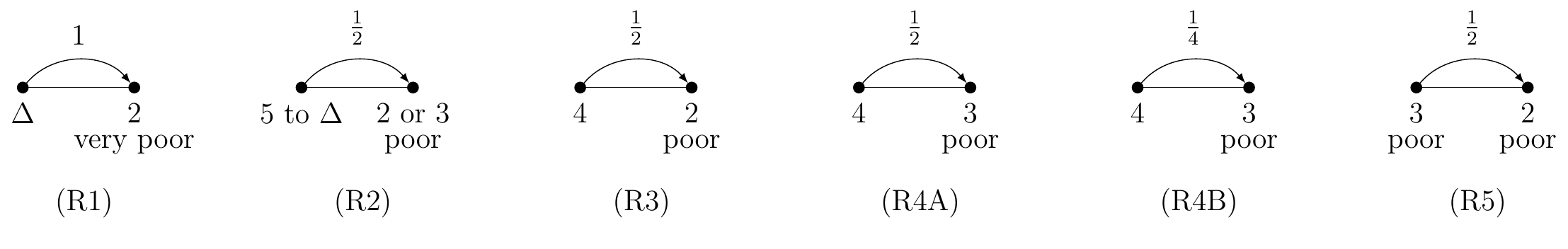}
  \caption{The discharging rules. }
  \label{fig:rules}
	\end{center}
\end{figure}

The next claim shows important properties of the final charge $\ch^*$.

\begin{claim}\label{cl: after charge}
Let $v\in V(G^*)$.
\begin{enumerate}[label=(\roman*)]
\item If $v$ is a $2_{G^*}$-vertex, then $\ch^*(v) = 3$.
\item If $v$ is a $3_{G^*}$-vertex, then $\ch^*(v) \geq 3$.
\item If $v$ is a $4_{G^*}$-vertex, then $\ch^*(v)\geq 3$. Moreover, if $\ch^*(v)=3$, 
then $v$ is adjacent to exactly two poor vertices and no very poor vertices. 
\item If $5\leq d_{G^*}(v)\leq \Delta-1$, then $\ch^*(v) \geq 3$. Moreover, if $\ch^*(v)=3$, 
then $d_{G^*}(v)=5$ and $v$  has exactly four poor 
neighbors.
\item If $v$ is a $\Delta_{G^*}$-vertex, then $\ch^*(v)> 3$. 
\end{enumerate}
\end{claim}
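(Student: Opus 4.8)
## Proof proposal for Claim~\ref{cl: after charge}

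The plan is to go through the five cases by degree of $v$ in $G^*$, checking in each case that the discharging rules leave $v$ with final charge at least $3$, and in the boundary cases pinning down exactly when equality can hold. Throughout I will use that by Claim~\ref{cl: basic reducible}(iii)--(iv) the $G$-degree and $G^*$-degree agree for $2$- and poor $3$-vertices, and that the only rules that remove charge from $v$ are (R1)--(R5) when $v$ plays the role of a donor.

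\emph{Degrees $2$ and $3$.} If $d_{G^*}(v)=2$ then $v$ is either poor or very poor. If $v$ is very poor, its sponsor is a $\Delta_{G^*}$-vertex by Claim~\ref{cl: reducible 1}(iii), which sends $v$ charge $1$ by (R1); $v$ sends out nothing, so $\ch^*(v)=2+1=3$. If $v$ is poor, then by Claim~\ref{cl: reducible 2}(i) (applied with the observation that $d_{G^*}(v)=2<\Delta$ and both neighbors cannot be poor-or-very-poor in the forbidden configuration) at least one neighbor of $v$ is neither poor nor very poor; more precisely one checks that a poor $2_{G^*}$-vertex receives charge $1/2$ from each of its two neighbors via exactly one of (R2), (R3), (R5), so $\ch^*(v)=2+1/2+1/2=3$. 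For $d_{G^*}(v)=3$: if $v$ is not poor it sends out nothing and keeps $\ch(v)=3$; if $v$ is poor it has exactly one $2_{G^*}$-neighbor and by Claim~\ref{cl: reducible 1}(iv) at least one $4^+_{G^*}$-neighbor. Then $v$ sends at most $1/2$ to its poor $2_{G^*}$-neighbor via (R5), and receives at least $1/4$ from its $4^+_{G^*}$-neighbor via (R2) or (R4). Summing, $\ch^*(v)\geq 3-1/2+1/4+\cdots\geq 3$; the bookkeeping has to be done carefully depending on whether the $2_{G^*}$-neighbor is itself poor and on whether the large neighbor is a $4$-vertex or a $5^+$-vertex, but in every subcase the net change is $\geq 0$.

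\emph{Degrees $4$ and $5\le d\le \Delta-1$.} For $d_{G^*}(v)=4$: $v$ sends out at most $1/2$ to each neighbor that is a poor vertex, via (R3) or (R4A), and only $1/4$ in the (R4B) case; it never receives charge. So $\ch^*(v)\geq 4-4\cdot\tfrac12=2$ is too weak directly — the point is that if $v$ has three or four poor-or-very-poor neighbors, then Claims~\ref{cl: reducible 2}(i) and~\ref{cl: 4-vtx} force $v$ into the (R4B) regime for its poor $3_{G^*}$-neighbors (so it pays only $1/4$ to each such), and very poor neighbors of $v$ receive nothing from $v$ at all; a case analysis on the number of poor $2$-vertices, poor $3$-vertices, and very poor vertices among $N_{G^*}(v)$ then gives $\ch^*(v)\geq 3$, with equality only when $v$ has exactly two poor neighbors and no very poor neighbor (each paid $1/2$). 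For $5\le d_{G^*}(v)\le\Delta-1$: by Claim~\ref{cl: reducible 2}(i) at least one neighbor is neither poor nor very poor, so $v$ has at most $d-1$ poor neighbors, and by Rules (R2)--(R4) it pays each at most $1/2$ and each very poor neighbor $0$ (since $v$ is not a $\Delta_{G^*}$-vertex, (R1) does not apply). Hence $\ch^*(v)\geq d-(d-1)/2=(d+1)/2\geq 3$, with equality exactly when $d=5$ and $v$ has four poor neighbors.

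\emph{Degree $\Delta$.} If $v$ is a $\Delta_{G^*}$-vertex, it pays $1$ to each very poor vertex it sponsors and $1/2$ to each poor neighbor, and by Claim~\ref{cl: no two very poor} it sponsors at most one very poor vertex. If $v$ sponsors one very poor vertex $u$, then by Claim~\ref{cl: reducible 2}(ii) some neighbor of $v$ is neither poor nor very poor, so $v$ pays $1$ to $u$ and at most $1/2$ to each of at most $\Delta-2$ other poor neighbors, giving $\ch^*(v)\geq \Delta-1-(\Delta-2)/2=\Delta/2\geq 7/2>3$. If $v$ sponsors no very poor vertex, it pays at most $1/2$ to each of at most $\Delta$ poor neighbors, so $\ch^*(v)\geq\Delta-\Delta/2=\Delta/2\geq 7/2>3$. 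This completes all cases.

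\emph{Main obstacle.} The genuinely delicate part is the $4_{G^*}$-vertex case (iii): there the raw bound $4-4\cdot\tfrac12=2$ is insufficient, so one must combine Claim~\ref{cl: reducible 2}(i), Claim~\ref{cl: 4-vtx}, and the split between (R4A) and (R4B) to show that $v$ cannot simultaneously pay $1/2$ to four neighbors — and to read off the precise equality condition. The degree-$3$ poor-vertex bookkeeping in (ii) is the secondary nuisance, since it requires tracking which rule supplies the compensating incoming charge. Everything else reduces to the simple counting inequalities above.
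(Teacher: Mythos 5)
Your overall strategy is the same as the paper's, and parts (i), (iv) and (v) are correct as written, but there are genuine gaps in exactly the two cases you yourself flag as delicate: the "careful bookkeeping" you defer would in fact fail without an extra structural input. In (ii), the dangerous subcase is a poor $3_{G^*}$-vertex $v$ whose unique $4^+_{G^*}$-neighbor $u$ is a $4_{G^*}$-vertex falling under (R4B): then $v$ pays $1/2$ by (R5) and receives only $1/4$, so $\ch^*(v)=3-\tfrac12+\tfrac14<3$. Nothing you cite excludes this. The paper rules it out with Claim~\ref{cl: 4-vtx}: in this subcase the three neighbors of $v$ have degrees $4$, $3$, $2$, so $u$ has at least two neighbors that are neither poor nor very poor, hence (R4A) applies and $u$ pays $1/2$, giving $\ch^*(v)=3$. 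You invoke Claim~\ref{cl: 4-vtx} only in part (iii), where it is not what is needed.

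In (iii), the case analysis on the numbers $(a,b,c)$ of poor $2$-, poor $3$-, and very poor neighbors does not close as described. When $a+b+c=3$ you correctly get the (R4B) rate of $1/4$ for poor $3$-neighbors, but (R3) still charges $v$ a full $1/2$ for each poor $2$-neighbor; so $a=2$, $b=1$ would give $\ch^*(v)=4-(\tfrac12+\tfrac12+\tfrac14)<3$, and $a=1$, $b=2$ gives $\ch^*(v)=3$ with \emph{three} poor neighbors, contradicting the "moreover" clause you are trying to prove. The missing observation is that a $4_{G^*}$-vertex three of whose neighbors are $2_{G^*}$-vertices or poor $3_{G^*}$-vertices satisfies condition (T3), so each of its $2_{G^*}$-neighbors is very poor rather than poor; hence $a=0$ whenever $a+b+c=3$ and the total payment is at most $3/4$. (The same definition-chase, not Claim~\ref{cl: reducible 2}(i) or Claim~\ref{cl: 4-vtx}, is what the paper uses both for the rival-of-a-very-poor-vertex case and for the three-poor-neighbours case.) With these two repairs your argument coincides with the paper's.
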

\begin{proof}
(i) Suppose $d_{G^*}(v)=2$. Then $v$ is either poor or very poor.
If $v$ is  very poor, then it is adjacent to its sponsor $w$, and
by Claim~\ref{cl: reducible 1}~(iii), $d_{G^*}(w)=\Delta$. Hence by (R1), $w$ gives charge $1$ to $v$, so $\ch^*(v)=3$. 

If $v$ is poor but not very poor, then both neighbors of $v$ are $3^+_{G^*}$-vertices, and each $3_{G^*}$-neighbor of $v$ is poor.
Thus, by rules (R2), (R3), or (R5), $v$ receives $1/2$ from each of its two neighbors in $G^*$.
 So  $\ch^*(v)=3$. 
\newline

(ii) Suppose $d_{G^*}(v)=3$. If $v$ is not  poor, then $v$ does not send or receive any charge, so $\ch^*(v)=\ch(v)=3$.
Now assume $v$ is poor.  
Then by Claim~\ref{cl: reducible 1} $(iv)$, $v$ has a $4^+_{G^*}$-neighbor.  By (R5), $v$ gives $1/2$ to its unique
$2_{G^*}$-neighbor, and by (R2) or (R4) receives at least $1/4$ from each $4^+_{G^*}$-neighbor. Thus, if
$\ch^*(v)<3$, then $v$ has only one $4^+_{G^*}$-neighbor, say
 $u$. Furthermore, by (R2), $d_{G^*}(u)=4$, and by (R4), $u$ has at most one neighbor in $G^*$ that is neither poor nor very poor.
 But this contradicts  
 Claim~\ref{cl: 4-vtx}. 
\newline
 
(iii) Suppose $d_{G^*}(v)=4$.
If $v$ is the rival of a very poor vertex $u$, then by the definition (T3), every $2_{G^*}$-neighbor of $v$ is very poor and hence $v$ does not give to its $2_{G^*}$-neighbors anything. 
In this case, it either gives $1/2$ to its unique $3_{G^*}$-neighbor by (R4A) 
or gives $1/4$ to each of its at most three poor neighbors by (R4B). 
In both cases, $\ch^*(v) \geq 4 - 3\cdot\frac{1}{4} >3$.

Otherwise, $v$ is not adjacent to any very poor vertices. 
Then by Claim~\ref{cl: reducible 2}~(i), $v$ is adjacent to at most three poor vertices. 
If $v$ has  three poor neighbors, then all of them are $3_{G^*}$-vertices, because in this case each $2_{G^*}$-neighbor of $v$ is very poor. 
Thus $v$ sends charge $1/4$ to its three poor neighbors by (R4B), so $\ch^*(v) \geq 4 - 3\cdot \frac{1}{4} >3$. The last possibility is that
$v$ has at most two poor neighbors. Since $v$ gives to any poor neighbor at most $1/2$, the only possibility to have
$\ch^*(v) \leq 3$ is that $v$ has exactly two poor neighbors and gives $1/2$ to each of them, in which case $\ch^*(v) = 3$.
This proves (iii).
\newline

(iv) Suppose $5\leq d_{G^*}(v)\leq \Delta-1$. 
By (R2), $v$ gives charge $1/2$ to each of its poor neighbors. 
By Claim~\ref{cl: reducible 2}~(i), the number of such neighbors is at most $d_{G^*}(v)-1$.
So
$$\ch^*(v)\geq \ch(v) - \frac{1}{2}(d_{G^*}(v)-1) = \frac{d_{G^*}(v)+1}{2}\geq 3.$$
Moreover, if $\ch^*(v)=3$, then $d_{G^*}(v)=5$ and $v$ is adjacent to exactly four poor neighbors.
\newline

(v) Suppose $d_{G^*}(v)=\Delta$. If $v$ has no very poor neighbors, then by (R2) it gives at most $1/2$
to each of its neighbors, and $\ch^*(v) \geq \ch(v)-\frac{d_{G^*}(v)}{2}=\frac{d_{G^*}(v)}{2}\geq \frac{\Delta}{2}>3$. Otherwise,
by Claim~\ref{cl: no two very poor}, it has only one very poor neighbor (to which it gives $1$ by (R1)),
but then by Claim~\ref{cl: reducible 2}~(ii), it has a neighbor that is neither poor nor very poor.
Thus by (R1) and (R2), again
$$\ch^*(v) \geq d_{G^*}(v)-1-\frac{\Delta-2}{2} = \Delta/2 > 3.$$
\end{proof}

Claim~\ref{cl: after charge} implies that $\sum_{v\in V(G^*)}\ch^*(v) \geq 3|V(G^*)|$ and together with~\eqref{s5}
yields
\begin{equation}\label{s6}
 \mbox{\em $\ch^*(v)=3$ for each $v\in V(G^*)$.}
\end{equation}

This yields the following facts.

\begin{claim}\label{cl: after structure}
Graph $G$ has the following properties.
\begin{enumerate}[label=(\roman*)]
\item $\Delta(G^*)\leq 5$.

\item $G^*$ has no very poor vertices.

\item $B(G)=\emptyset$.

\item If $v$ is a $5_{G^*}$-vertex, then $d_G(v)=d_{G^*}(v)$.

\item There are no $5_{G^*}$-vertices; in other words, $\Delta(G^*)\leq 4$.

\item There are no poor $3_{G^*}$-vertices.
\end{enumerate}
\end{claim}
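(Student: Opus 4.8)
The plan is to read all six statements off the discharging equality~\eqref{s6}, which says $\ch^*(v)=3$ for every $v\in V(G^*)$, combined with the structural claims already proved; the two substantial parts, (v) and (vi), are handled by the now-familiar scheme of deleting a vertex, invoking the minimality of $G$, and re-extending the colouring along an $(f,3\Delta)$-degenerate sequence with the sinks deferred to the end.

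Three of the parts are immediate. For (i): $d_{G^*}(v)=\Delta$ gives $\ch^*(v)>3$ by Claim~\ref{cl: after charge}(v), while $5\le d_{G^*}(v)\le\Delta-1$ together with $\ch^*(v)=3$ forces $d_{G^*}(v)=5$ by Claim~\ref{cl: after charge}(iv); since $\Delta\ge7$, no vertex of $G^*$ has degree above $5$. For (ii): a very poor vertex has a sponsor, which is a $\Delta_{G^*}$-vertex by Claim~\ref{cl: reducible 1}(iii) and hence cannot exist by (i) as $\Delta\ge7>5$. For (iv): by~\eqref{s6} and Claim~\ref{cl: after charge}(iv) a $5_{G^*}$-vertex $v$ has exactly four poor neighbours, the fifth being neither poor nor very poor by Claim~\ref{cl: reducible 2}(i); each poor neighbour has $G$-degree at most $3$ by Claim~\ref{cl: basic reducible}(iii)--(iv) and the fifth has $G$-degree at most $\Delta$, so $\sum_{u\in N_{G^*}(v)}d_G(u)\le\Delta+12\le2\Delta+5=2\Delta+d_{G^*}(v)$ because $\Delta\ge7$, and Claim~\ref{cl: basic reducible}(i) yields $d_G(v)=d_{G^*}(v)$. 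Part (iii) I would deduce at the very end: once (vi) is proved there are no poor $3_{G^*}$-vertices, so by definition no edge is a $u$-sink and $B(G)=\emptyset$.

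The heart of the claim is (v). Take a $5_{G^*}$-vertex $v$; by (iv) $d_G(v)=5$, and by~\eqref{s6} and Claim~\ref{cl: after charge}(iv) its neighbours are four poor vertices $u_1,\dots,u_4$ (each of $G$-degree at most $3$) and one vertex $x$ that is neither poor nor very poor. Delete $v$; by minimality $G-v$ has a $3\Delta$-colouring $f$, and after uncolouring the sinks incident to whichever $u_i$ are poor $3_{G^*}$-vertices (each such $u_iu_i''$ is a $u_i$-sink since $d_G(v)=5<\Delta$) the restriction of $f$ is a partial $3\Delta$-colouring of $G$. I would then argue that $(vu_1,\dots,vu_4,vx,\dots,B(G))$ is an $(f,3\Delta)$-degenerate sequence and apply Lemma~\ref{lem: degenerate}: each $vu_i$ is cheap because $d_G(u_i)\le3$, and the deferred sinks are cheap by Claim~\ref{cl: reducible 1}(ii). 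The main obstacle is the single expensive edge $vx$: in general $|N^2_G[vx]|$ can be of order $\Delta^2$, so one must control it using that $x$ is neither poor nor very poor (whence $d_{G^*}(x)\le5$, and one tries to force $d_G(x)=d_{G^*}(x)$ via Claim~\ref{cl: basic reducible}(i)) together with the multiplicity $m(f,vx)$, or else additionally uncolour a bounded number of edges around $x$ and put them last in the sequence. This bookkeeping is where $\Delta\ge7$ is needed.

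Finally (vi). If $v$ is a poor $3_{G^*}$-vertex then $d_G(v)=3$ (Claim~\ref{cl: basic reducible}(iv)), and by~\eqref{s6} with rules (R2)--(R5) it gives $1/2$ to its unique $2_{G^*}$-neighbour and receives exactly $1/2$ from its $4^+_{G^*}$-neighbours (one exists by Claim~\ref{cl: reducible 1}(iv)); running through the cases, the neighbourhood of $v$ is forced to be either $(2_{G^*},3_{G^*},4_{G^*})$ with the $4_{G^*}$-vertex having at least two non-poor, non-very-poor neighbours (from (R4A)), or $(2_{G^*},4_{G^*},4_{G^*})$ with both $4_{G^*}$-vertices having at most one such neighbour (from (R4B)), the $5_{G^*}$-neighbour case being excluded by (v) and every other degree pattern giving $\ch^*(v)>3$. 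Each surviving type I would refute by deleting $v$, colouring $G-v$ by minimality, noting that the edge from $v$ to its $2_{G^*}$-neighbour is a sink, and extending along a degenerate sequence ending with $B(G)$: in the type $(2_{G^*},4_{G^*},4_{G^*})$ the counts close comfortably for $\Delta\ge7$, while the type $(2_{G^*},3_{G^*},4_{G^*})$ is the delicate one and needs the extra information of Claim~\ref{cl: 4-vtx} — that the $4_{G^*}$-neighbour has two free neighbours — to trim $N^2_G$ of the edge joining $v$ to that $4_{G^*}$-vertex below $3\Delta$. With (v) and (vi) in hand, (iii) follows as noted and Claim~\ref{cl: after structure} is proved.
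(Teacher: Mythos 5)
Parts (i), (ii) and (iv) of your proposal are correct and essentially identical to the paper's argument. The genuine gap is in (v), and it propagates into (vi) and hence into your deferred proof of (iii). In (v) you delete the whole vertex $v$ and therefore must re-colour the edge $vx$ to the fifth neighbour $x$, the one that is neither poor nor very poor. You correctly flag that $|N^2_G[vx]|$ is out of control — $d_G(x)$ may be as large as $\Delta$ and the neighbours of $x$ may again have $G$-degree $\Delta$, so $|N^2_G[vx]|$ can be of order $\Delta^2$ — but the remedies you sketch do not close the gap: the multiplicity $m(f,vx)$ has no a priori lower bound, and the offending edges are the up-to-$\Delta$ edges at each of the up-to-$\Delta-1$ neighbours of $x$, not ``a bounded number of edges around $x$'' that could be pushed to the end of a degenerate sequence. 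The paper sidesteps the edge $vx$ entirely: it first proves (iii) by a direct argument (delete the single edge $uw$, pad $w'$ with leaves to degree $\Delta$, permute the pendant colours at $w'$), uses (iii) to conclude that all four poor neighbours of a $5_{G^*}$-vertex are in fact $2_{G^*}$-vertices, and then in (v) removes only the one edge $vu_1$ (again padding $w_1$ with leaves), so that $vu_5$ stays coloured and the only count needed is $|N^2_G[u_1v]|\leq \Delta+3(2)+\Delta+1\leq 3\Delta$.

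Your decision to derive (iii) from (vi) is what forces you into this corner, and it also breaks (vi). Your case analysis there is right: the neighbourhood type $(2_{G^*},4_{G^*},4_{G^*})$ with (R4B) is already impossible by pure discharging (by~\eqref{s6} and Claim~\ref{cl: after charge}~(iii) each such $4_{G^*}$-vertex has exactly two poor and no very poor neighbours, so (R4A) applies at both and $\ch^*(v)=7/2$), and the type $(2_{G^*},3_{G^*},4_{G^*})$ is consistent with $\ch^*(v)=3$ and must be killed some other way. The paper kills it precisely by invoking (iii): since the $3_{G^*}$-neighbour $u_2$ has $d_G(u_2)=3<\Delta$, the edge $vu_1$ is a $v$-sink. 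Your substitute — delete $v$ and re-extend along a degenerate sequence — fails for the same reason as in (v): the edges $vu_2$ and $vu_3$ to the $3_{G^*}$- and $4_{G^*}$-neighbours have second neighbourhoods reaching the neighbours of $u_2$ and $u_3$, which may each have $G$-degree $\Delta$, giving counts above $3\Delta$ (of order $\Delta^2$ for $vu_3$, since $d_G(u_3)$ may equal $\Delta$). The fix is to prove (iii) first as a standalone edge-deletion argument and then follow the paper's order (iii) $\Rightarrow$ (v) $\Rightarrow$ (vi), never uncolouring an edge both of whose endpoints can see high-degree vertices.
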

\begin{proof}
(i) Claim~(i) follows from~\eqref{s6} and Claim~\ref{cl: after charge} (parts (iv) and (v)).
\newline

(ii) If $v$ is a very poor vertex in $G^*$, then  by Claim~\ref{cl: reducible 1}~(iii), it has a $\Delta_{G^*}$-neighbor, contradicting (i).
\newline

(iii) Suppose that $G^*$ has a $u$-sink $uw$, which means that $d_{G^*}(w)=2$, $d_{G^*}(u)=3$, and 
\begin{equation}\label{s7}
 d_G(u')+d_G(u'')\leq 2\Delta-1.
\end{equation}
where $N_{G^*}(u)=\{w,u',u''\}$ and $N_{G^*}(w)=\{u,w'\}$.
Recall that by Claim~\ref{cl: basic reducible}, $d_G(w)=d_{G^*}(w)=2$ and $d_G(u)=d_{G^*}(u)=3$.
Let $H$ be obtained from $G-uw$ by adding leaves adjacent only to $w'$ so that the degree of $w'$ in $H$ is $\Delta$.
Since $d_H(w)=1$, $H$  has fewer $2^+$-vertices than $G$. Also, $\Mad(H)\leq 3$ and $H$ has no $3$-regular subgraphs.
So by the minimality of $G$, the graph $H$ has a $3\Delta$-coloring $f$. By (i) and the fact that $d_H(w)=1$, vertex $w'$ has
at least $\Delta-4\geq 3$ $1_H$-neighbors, including $w$. Hence we can switch the colors of the pendant edges incident to $w'$ so
that $f(w'w)\notin \{f(uu'),f(uu'')\}$. Then $f\vert_{E(G)}$ is a partial  $3\Delta$-coloring of $G$, where the only non-colored edge
is $uw$. But by~\eqref{s7},
$$|N^2_G[uw]|\leq d_G(u')+d_G(u'')+d_G(w')+1\leq (2\Delta-1)+\Delta+1=3\Delta,$$
so we can extend $f$ to $uw$, a contradiction to the choice of $G$. This proves (iii).
\newline

(iv) Suppose  $d_{G^*}(v)=5$.  By Claim~\ref{cl: after charge}~(iv), we may assume that $N_{G^*}(v)=\{u_1,\ldots,u_5\}$, where
each of $u_1,u_2, u_3,u_4$ is either poor or very poor. In particular, by Claim~\ref{cl: basic reducible}, $d_G(u_i)=d_{G^*}(u_i)\leq 3$
for $i\in[4]$. Thus,
$$\sum_{i=1}^5d_G(u_i)\leq 3(4)+\Delta\leq 5+2\Delta=d_{G^*}(v)+2\Delta,$$
and Claim~\ref{cl: basic reducible}~(i) implies $d_G(v)=d_{G^*}(v)$.
\newline

(v)  Suppose  $d_{G^*}(v)=5$ and $N_{G^*}(v)=\{u_1,\ldots,u_5\}$, where
each of $u_1,u_2, u_3,u_4$ is either poor or very poor. If some $u_i$ is a poor $3_{G^*}$-vertex adjacent to a
$2_{G^*}$-vertex $w_i$, then by (iv), $u_iw_i$ is the $u_i$-sink. But this contradicts (iii). Thus 
\begin{equation}\label{s8}
 \mbox{\em
for each $i\in[4]$,
$u_i$ is a poor $2_{G^*}$-vertex. }
\end{equation}
So we may assume that $N_G(u_i)=\{v,w_i\}$ for $i\in [4]$.
Let $H$ be obtained from $G-vu_1$ by adding leaves adjacent only to $w_1$ so that the degree of $w_1$ in $H$ is $\Delta$.
Since $d_H(u_1)=1$, $H$  has fewer $2^+$-vertices than $G$. Also, $\Mad(H)\leq 3$ and $H$ has no $3$-regular subgraphs.
So by the minimality of $G$, the graph $H$ has a $3\Delta$-coloring $f$. By (i) and the fact that $d_H(u_1)=1$, the number of $1_H$-neighbors of the vertex $w_1$, including $u_1$, is at least $\Delta-4\geq 3$.
Hence we can switch the colors of the pendant edges incident to $w_1$ so that $f(w_1u_1)\neq f(vu_5)$. 

If $f(w_1u_1)\notin \{f(vu_2), f(vu_3),f(vu_4)\}$, then $f\vert_{E(G)}$ is a partial $3\Delta$-coloring of $G$, where the only uncolored edge
is $u_1v$. But in this case by~\eqref{s8},
\begin{equation}\label{s9}
|N^2_G[u_1v]|\leq d_G(w_1)+\sum_{i=2}^5d_G(u_i)+1\leq \Delta+3(2)+\Delta+1=2\Delta+7\leq 3\Delta,
\end{equation}
so we can extend $f$ to $u_1v$, a contradiction to the choice of $G$. 

Thus by symmetry, we may assume  $f(w_1u_1)=f(vu_2)$. Then the coloring $f'$ obtained from $f\vert_{E(G)}$ by uncoloring $vu_2$ is
a partial $3\Delta$-coloring of $G$, where the only uncolored edges
are $u_1v$ and $u_2v$. Again by~\eqref{s9}, we can extend $f'$ to $u_1v$ and then by the similar inequality for 
$|N^2_G[u_2v]|$, extend it to $u_2v$, a contradiction to the choice of $G$. This proves (v).
\newline

(vi) Suppose that $v$ is a poor $3_{G^*}$-vertex with $N_{G^*}(v)=\{u_1,u_2,u_3\}$ where $d_{G^*}(u_1)=2$ and
$d_{G^*}(u_2),d_{G^*}(u_3)\geq 3$. By (v), $d_{G^*}(u_2),d_{G^*}(u_3)\leq 4$. Moreover, if, say, $d_{G^*}(u_2)= 3$, then by Claim~\ref{cl: basic reducible}~(iv), $d_{G}(u_2)= 3$.
Hence $vu_1$ is a $v$-sink, yet, this contradicts~(iii).
Therefore $d_{G^*}(u_2)=d_{G^*}(u_3)= 4$. 
So, by Claim~\ref{cl: after charge}~(iii), each of $u_2$ and $u_3$ has exactly two poor neighbors in $G^*$. 
Now by (R4), each of $u_2$ and $u_3$ gives $1/2$ to $v$, while $v$ gives to $u_1$ only $1/2$ by (R5). Hence $\ch^*(v)=3+2\cdot\frac{1}{2}-1/2=7/2>3$, a contradiction to~\eqref{s6}.
\end{proof}

Claim~\ref{cl: after structure} implies that the degree of a vertex in $G^*$ must be in $\{2, 3, 4\}$. 
Moreover, since $G^*$ has neither very poor vertices nor poor $3_{G^*}$-vertices and each $4_{G^*}$-vertex is adjacent
to exactly two poor vertices,
\begin{equation}\label{s11}
\parbox{5.2in}{\em  each $2_{G^*}$-vertex is adjacent only to  $4_{G^*}$-vertices, and each $4_{G^*}$-vertex is adjacent
to exactly two
$2_{G^*}$-vertices.}
\end{equation}

The last claim that we need is:

\begin{claim}\label{cl: after structure 2}
 If $v$ is a $4_{G^*}$-vertex, then $d_G(v)=d_{G^*}(v)$.
\end{claim}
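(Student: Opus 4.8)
The plan is to reduce the claim to Claim~\ref{cl: basic reducible}~(i) by pinning down the degrees in $G$ of the four neighbors of $v$. First I would record the structure available just before this claim: by Claim~\ref{cl: after structure}~(v) we have $\Delta(G^*)\le 4$, and by~\eqref{s11} every $4_{G^*}$-vertex has \emph{exactly} two $2_{G^*}$-neighbors. Applying this to our $4_{G^*}$-vertex $v$, I would write $N_{G^*}(v)=\{u_1,u_2,u_3,u_4\}$ with $d_{G^*}(u_1)=d_{G^*}(u_2)=2$; since the remaining two neighbors are not $2_{G^*}$-vertices and $\delta(G^*)\ge 2$, $\Delta(G^*)\le 4$, we get $d_{G^*}(u_3),d_{G^*}(u_4)\in\{3,4\}$.

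Next I would bound $\sum_{u\in N_{G^*}(v)} d_G(u)$. By Claim~\ref{cl: basic reducible}~(iii), $d_G(u_1)=d_G(u_2)=2$, and since $u_3,u_4\in V(G)$ each has degree at most $\Delta$ in $G$. Therefore
$$\sum_{u\in N_{G^*}(v)} d_G(u)\le 2+2+\Delta+\Delta = 2\Delta+4 = 2\Delta+d_{G^*}(v),$$
which is precisely inequality~\eqref{s1} for $v$. Claim~\ref{cl: basic reducible}~(i) then gives $d_G(v)=d_{G^*}(v)$, which is the desired conclusion.

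I do not expect a real obstacle here; the only point requiring care is the correct use of~\eqref{s11}: it is essential that at most two neighbors of $v$ can have degree $\Delta$ in $G$, since otherwise the bound $2\Delta+d_{G^*}(v)$ would be violated and Claim~\ref{cl: basic reducible}~(i) could not be invoked. Everything else is a one-line degree count.
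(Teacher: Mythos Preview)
Your proposal is correct and follows essentially the same route as the paper's proof: use \eqref{s11} to identify two $2_{G^*}$-neighbors of $v$, invoke Claim~\ref{cl: basic reducible}~(iii) to get that these have degree $2$ in $G$, bound the remaining two neighbors by $\Delta$, and apply Claim~\ref{cl: basic reducible}~(i). The extra observation that $d_{G^*}(u_3),d_{G^*}(u_4)\in\{3,4\}$ is harmless but unnecessary for the argument.
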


\begin{proof}
Let $v$ be a $4_{G^*}$-vertex.
By~\eqref{s11}, we may assume that $N_{G^*}(v)=\{u_1,u_2,u_3,u_4\}$, where $d_G(u_1)=d_G(u_2)=2$. So
 $$\sum_{i=1}^4 d_{G}(u_i)\leq 2(2)+ 2\Delta=2\Delta + d_{G^*}(v).$$
Hence by Claim~\ref{cl: basic reducible}~(i), $d_G(v)=d_{G^*}(v)=4$.
\end{proof}

\medskip
Now we are ready to finish the proof of Theorem~\ref{lem: 3Delta}. Since $\Mad(G^*)\leq 3$ and $G^*$ has no
$3$-regular subgraphs, it has a vertex $u$ with $d_G(u)=d_{G^*}(u)=2$. Let $N_{G}(u)=\{v,w\}$. By~\eqref{s11},
$d_{G^*}(v)=d_{G^*}(w)=4$. So by Claim~\ref{cl: after structure 2}, $d_{G}(v)=d_{G}(w)=4$.
Let $N_G(v)=\{u,v_1,v_2,v_3\}$.  By~\eqref{s11}, we may assume that $d_G(v_1)=2$.

Let $H$ be the graph obtained from $G-uv$ by adding $\Delta-4$ leaves adjacent only to $w$ so that $d_H(w)=\Delta$.
Since $d_H(u)=1$, $H$  has fewer $2^+$-vertices than $G$. Also, $\Mad(H)\leq 3$ and $H$ has no $3$-regular subgraphs.
So by the minimality of $G$, the graph $H$ has a $3\Delta$-coloring $f$. 
Since $d_{G}(w)=4$, the number of $1_H$-neighbors of the vertex $w$, including $u$, is at least $\Delta-3\geq 4$. Hence we can switch the colors of the pendant edges incident to $w$ so
that  $f(wu)\notin \{f(vv_1), f(vv_2),f(vv_3)\}$.
Thus  $f\vert_{E(G)}$ is a partial $3\Delta$-coloring of $G$, where the only uncolored edge
is $uv$. 
However, 
$$
|N^2_G[uv]|\leq d_G(w)+\sum_{i=1}^3d_G(v_i)+1\leq 4+2+2\Delta+1=2\Delta+7\leq 3\Delta,
$$
and so we can extend $f$ to $uv$, a contradiction to the choice of $G$.
This finishes the proof of Theorem~\ref{lem: 3Delta}.\newline

\noindent {\bf Acknowledgement.}
We thank the referees for careful reading and valuable comments.


\end{document}